\documentclass{amsart}

\usepackage{amsmath}
\usepackage{amssymb}
\usepackage{amsthm}
\usepackage{thmtools}
\usepackage{mathtools}
\usepackage[mathscr]{eucal}
\usepackage{esint}
\usepackage{enumerate}
\usepackage[foot]{amsaddr}
\def\Xint#1{\mathchoice
{\XXint\displaystyle\textstyle{#1}}%
{\XXint\textstyle\scriptstyle{#1}}%
{\XXint\scriptstyle\scriptscriptstyle{#1}}%
{\XXint\scriptscriptstyle\scriptscriptstyle{#1}}%
\!\int}
\def\XXint#1#2#3{{\setbox0=\hbox{$#1{#2#3}{\int}$}
\vcenter{\hbox{$#2#3$}}\kern-.5\wd0}}

\def\dashint{\Xint-}

\usepackage{tikz}
\usetikzlibrary{cd}
\usepackage{xcolor}
\usepackage{slashed}
\usepackage{tensor}

\usepackage[colorlinks=true,linkcolor=blue,citecolor=blue]{hyperref}

\newtheorem{theorem}{Theorem}[section]
\newtheorem{corollary}[theorem]{Corollary}
\newtheorem{lemma}[theorem]{Lemma}
\newtheorem{remark}[theorem]{Remark}
\newtheorem{proposition}[theorem]{Proposition}

\newtheorem{definition}[theorem]{Definition}

\newtheorem{notation}[theorem]{Notation}

\usepackage[capitalise,noabbrev]{cleveref}

\newcommand{\R}{\mathbb{R}}
\newcommand{\Z}{\mathbb{Z}}
\newcommand{\N}{\mathbb{N}}

\newcommand{\U}{\mathbb{U}}
\newcommand{\T}{\mathbb{T}}
\newcommand{\F}{\mathcal{F}}
\newcommand{\Sp}{\mathbb{S}}
\newcommand{\A}{\mathcal A}
\newcommand{\cH}{\mathcal H}

\newcommand{\Ric}{\operatorname{Ric}}
\newcommand{\Scal}{\operatorname{Scal}}
\newcommand{\Scalminus}{\Scal^{-}}
\newcommand{\RCD}{\operatorname{RCD}}
\newcommand{\diver}{\operatorname{div}}
\newcommand{\Lip}{\operatorname{Lip}}
\newcommand{\loc}{\operatorname{loc}}
\newcommand{\vol}{\operatorname{Vol}}
\newcommand{\supp}{\operatorname{supp}}
\newcommand{\TestF}{\operatorname{TestF}}
\newcommand{\TestV}{\operatorname{TestV}}
\newcommand{\Hess}{\operatorname{Hess}}

\title[Torus and PMT stability with Ricci bounds]{Torus and Positive Mass stability for metrics with Ricci curvature lower bound}
\author{Edward Bryden$^1$}
\thanks{$^1$Funded by the FWO (grant 12F0223N)}
\address{$^1$University of Antwerp}
\author{Zhizhang Xie$^2$}
\thanks{$^2$partially supported by NSF 2247322}

\address{$^2$Texas A\&M University}
\email{etbryden@gmail.com, xie@tamu.edu}
\begin{document}
\begin{abstract}
    Consider a sequence of metrics $g_i$ on the torus whose
    members have uniform lower bounds on their first stable systoles
    and Ricci curvatures, and have an uniform upper bound on their
    diameters. If the $L^1$ norm of the negative part of the
    scalar curvatures vanishes along this sequence, then we show
    there is a subsequence of the metrics which converges in the
    measured-Gromov-Hausdorff topology to a flat metric on the torus.
    Something analogous holds for sequences of asymptotically flat spin Riemannian manifolds with a uniform lower bound on Ricci curvature and nonnegative scalar curvature: if the ADM masses of the distinguished ends tend to zero along the sequence, then the manifolds converge to Euclidean space in the pointed measured Gromov–Hausdorff sense.
\end{abstract}
\maketitle
\section{Introduction}
Understanding how curvature constrains geometry and topology is an enormous undertaking, and has been extremely fruitful.
Two well known and widely celebrated achievements in this direction are the theory of sectional curvature upper and lower bounds,
and the theory of Ricci curvature lower bounds.
Given the importance of understanding sectional curvature and Ricci curvature, one naturally wonders what constraints scalar curvature
might impose.
Since scalar curvature is just a function, we should expect that metrics with bounded scalar curvature should still exhibit a great deal
of flexibility, which is true.
For example, in \cite{Kazdan-Warner_prescribed_scalar_curvature} J. Kazdan and F. Warner show that for a 
compact manifold with dimension greater than  
or equal to 3, any function which is negative somewhere is the scalar curvature of some Riemannian metric on the manifold.
We therefore see that scalar curvature upper bounds cannot tell us much in general.
In fact, J. Lohkamp later showed in \cite{metrics_of_negative_Ricci_curvature} that any manifold admits complete metrics
with negative Ricci curvature. 
The only hope is that scalar curvature lower bounds might give some control over metrics.
There are several well known results in this direction: the Positive Mass Theorem, see \cite{SchoenYauPMT,Witten-PMT},
Llarull's theorem \cite{Llarull}, and the Torus Rigidity Theorem, see \cite{schoen_yau_79, GL1}.
Let us recall the statement of the Torus Rigidity Theorem, namely a Riemannian metric on a torus is flat if and only if
it has nonnegative scalar curvature.
This has been proven using minimal surfaces \cite{schoen_yau_79}, using spinors \cite{GL1},
and, in the three dimensional case, using nontrivial harmonic maps into $\Sp^{1}$.
Let us review some aspects of the latter two approaches, starting anachronistically with the harmonic map approach.

In \cite{Stern} D. Stern demonstrated the following 
inequality:
\begin{theorem}[{\cite{Stern}}]
    Let $(M^3,g)$ be a closed, oriented 3-manifold, and let
    $u:M\rightarrow\Sp^1$ be a nontrivial harmonic map.
    Denote by $\Sigma_{\theta}=u^{-1}\lbrace\theta\rbrace$ the level sets of $u$, and $\chi(\Sigma_{\theta})$ their
    Euler characteristic when they are smooth. 
    Then, we have
    \begin{equation}
        2\pi\int_{\Sp^1}\chi(\Sigma_{\theta})d\theta\geq
        \frac12\int_{M}\frac{|\nabla^2u|^2}{|du|}+R_g|du|
        dV_g.
    \end{equation}
\end{theorem}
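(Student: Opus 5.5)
The idea is to combine a Bochner identity for the harmonic function $u$, the traced Gauss equation on its regular level sets, and the coarea formula, so that the scalar curvature and the Euler characteristic of the fibers enter through Gauss--Bonnet. Lift $u$ locally to a real-valued harmonic function; the differential $du$ is then a globally defined harmonic $1$-form.

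\textbf{Step 1: the pointwise identity.} On the open set where $|du|\neq 0$, Bochner's formula for the harmonic $1$-form $du$ gives
\[
\tfrac12\Delta|du|^2=|\nabla^2u|^2+\Ric(\nabla u,\nabla u).
\]
Let $\Sigma$ be the level set through the point, with unit normal $\nu=\nabla u/|du|$. Its second fundamental form is $A=\nabla^2u|_{T\Sigma}/|du|$. Harmonicity gives the mean curvature $H=-\nabla^2u(\nu,\nu)/|du|$. The traced Gauss equation reads
\[
2\Ric(\nu,\nu)=R_g-R_\Sigma+H^2-|A|^2 ,
\]
and in dimension two $R_\Sigma=2K_\Sigma$. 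Substituting, and using the refined Kato-type identity for the harmonic function $u$ to rewrite $H^2-|A|^2$ together with $|\nabla^2 u|^2$, one obtains
\[
\Delta|du|\geq \frac{1}{2|du|}\Big(|\nabla^2u|^2+R_g|du|^2-2K_\Sigma|du|^2\Big).
\]
This holds in the sense of the (regularized) computation $\Delta\sqrt{|du|^2+\delta}$. The algebra needs care because of the $|\nabla|du||^2$ term, but it is a routine check.

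\textbf{Step 2: integrate and apply coarea.} Integrate the inequality over $M$. Since $M$ is closed, the left side integrates to zero. This is justified by working with $\varphi_\delta=\sqrt{|du|^2+\delta}$, whose Laplacian integrates to exactly zero, and letting $\delta\to0$ with monotone and dominated convergence. On the right, apply the coarea formula to the term $\int_M K_\Sigma|du|\,dV_g$:
\[
\int_M K_{\Sigma}|du|\,dV_g=\int_{\Sp^1}\int_{\Sigma_\theta}K_{\Sigma_\theta}\,dA\,d\theta .
\]
By Sard's theorem almost every $\theta$ is a regular value, so $\Sigma_\theta$ is a smooth closed surface. Gauss--Bonnet then gives $\int_{\Sigma_\theta}K=2\pi\chi(\Sigma_\theta)$. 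Rearranging yields the stated inequality
\[
2\pi\int_{\Sp^1}\chi(\Sigma_\theta)\,d\theta\geq\frac12\int_M\frac{|\nabla^2u|^2}{|du|}+R_g|du|\,dV_g .
\]

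\textbf{Main obstacle: the critical set.} The difficulty is controlling the critical set $\{du=0\}$, where the pointwise identity is singular. I would argue as follows.
\begin{itemize}
\item Because $u$ is harmonic and nonconstant, its critical set has Hausdorff dimension at most $n-2=1$, so it has measure zero.
\item The integrand $|\nabla^2u|^2/|du|$ is nonnegative, so Fatou's lemma applies. This gives an inequality rather than an equality in the limit $\delta\to0$, and that is exactly the direction needed.
\item I would verify that the regularized error term $\delta\,|\nabla^2 u|^2/\varphi_\delta^3$ has the right sign, or tends to zero, as $\delta\to0$.
\end{itemize}
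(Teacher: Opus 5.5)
The paper does not prove this theorem; it is quoted from \cite{Stern}. Your outline (Bochner for $du$, the traced Gauss equation on regular level sets, the regularization $\varphi_\delta=\sqrt{|du|^2+\delta}$, coarea, Gauss--Bonnet) is essentially Stern's strategy.

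Step~1 is right. It is in fact an exact identity on $\{du\neq0\}$ with no Kato-type input: once $\Delta u=0$ gives $H=-\nabla^2u(\nu,\nu)/|du|$, the combination $|\nabla^2u|^2-|d|du||^2+\frac12|du|^2(H^2-|A|^2)$ is exactly $\frac12|\nabla^2u|^2$. The regularization error is $\delta\,|d|du||^2/\varphi_\delta^3\ge0$.

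The gap is in Step~2, and it is not the Hessian term. You isolate $\int_M K_\Sigma|du|\,dV_g$, apply coarea and Gauss--Bonnet to it alone, and pass $\delta\to0$ by dominated convergence. At fixed $\delta$ that term is $\int_{\Sp^1}\int_{\Sigma_\theta}K_\Sigma\,|du|\,\varphi_\delta^{-1}\,dA\,d\theta$. Its weight is not constant on $\Sigma_\theta$, so Gauss--Bonnet only applies after the limit. Passing to the limit, or even writing $\int_M K_\Sigma|du|$ as a Lebesgue integral, needs $K_\Sigma|du|\in L^1(M)$. The only pointwise control is $|K_\Sigma|\,|du|\le C|du|+C|\nabla^2u|^2/|du|$, since $|A|^2\le|\nabla^2u|^2/|du|^2$ blows up at the critical set. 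So you would need $\int_M|\nabla^2u|^2/|du|<\infty$ in advance, which is circular: that finiteness is an output of the theorem. Fatou applied to the Hessian term alone cannot rescue this, because $K_\Sigma$ has no sign.

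The repair is to keep the terms together until after the limit. From Bochner and $|\nabla\varphi_\delta|=|\nabla^2u(\nabla u,\cdot)|/\varphi_\delta\le|\nabla^2u|$ one gets $\varphi_\delta\Delta\varphi_\delta\ge\Ric(\nabla u,\nabla u)$. Hence $\Delta\varphi_\delta\ge-\Lambda|du|$, where $\Ric_g\ge-\Lambda$ with $\Lambda\ge0$ on the closed manifold $M$.

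By coarea, $0=\int_M\Delta\varphi_\delta\,dV_g=\int_{\Sp^1}h_\delta(\theta)\,d\theta$, where $h_\delta(\theta):=\int_{\Sigma_\theta}\Delta\varphi_\delta/|du|\,dA\ge-\Lambda|\Sigma_\theta|$. This lower bound is integrable, since $\int_{\Sp^1}|\Sigma_\theta|\,d\theta=\int_M|du|\,dV_g<\infty$.

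For a regular value $\theta$, $|du|$ is bounded away from zero near the compact surface $\Sigma_\theta$, so $\varphi_\delta\to|du|$ in $C^2$ there. By your Step~1 identity and Gauss--Bonnet, $h_\delta(\theta)\to\frac12\int_{\Sigma_\theta}\bigl(|\nabla^2u|^2/|du|^2+R_g\bigr)dA-2\pi\chi(\Sigma_\theta)$.

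Fatou's lemma in $\theta$, justified by the integrable lower bound, shows that the $\theta$-integral of this limit is $\le0$. Coarea then turns $\int_{\Sp^1}\frac12\int_{\Sigma_\theta}(\cdots)\,dA\,d\theta$ back into $\frac12\int_M\bigl(|\nabla^2u|^2/|du|+R_g|du|\bigr)dV_g$. This gives the stated inequality, as an inequality in $(-\infty,\infty]$. The integrability of $|\nabla^2u|^2/|du|$ now comes out as a consequence rather than being assumed.
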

This has important consequences for 3-dimensional Riemannian manifolds in general, and for
metrics on the torus in particular.
Specifically, we are led to the following corollary.
\begin{corollary}[{\cite{Stern}}]\label{cor:3-dim_scal_formula}
    Let $g$ be a Riemannian metric on $\T^3$, let $\Scalminus_g$ be
    the negative part of the scalar curvature, and let $u:(\T^3,g)\rightarrow\Sp^1$
    be a nontrivial harmonic map. Then, we have
    \begin{equation}
        \int_{\T^3}|du|\Scalminus_gdV_g\geq\int_{\T^3}\frac{|\nabla^2u|^2}{|du|}
        dV_g.
    \end{equation}
\end{corollary}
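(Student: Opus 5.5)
The plan is to apply Stern's inequality (the theorem above) to the given harmonic map $u:(\T^3,g)\to\Sp^1$ and then control the left-hand side topologically. Two facts are needed: the Euler characteristics of the regular level sets are nonpositive in the integrated sense, and $R_g|du|\geq -\Scalminus_g|du|$ pointwise. Combining them gives
\begin{equation*}
0\geq 2\pi\int_{\Sp^1}\chi(\Sigma_\theta)\,d\theta\geq \frac12\int_{\T^3}\frac{|\nabla^2u|^2}{|du|}\,dV_g-\frac12\int_{\T^3}\Scalminus_g|du|\,dV_g .
\end{equation*}
Rearranging gives exactly the claimed inequality; the factor $\frac12$ multiplies both terms and cancels. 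The integrand $|\nabla^2 u|^2/|du|$ is interpreted as in Stern's theorem, i.e.\ as $0$ where $du=0$, which is harmless since by Stern's analysis the right-hand side is well defined and finite.

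The heart of the matter is showing $\chi(\Sigma_\theta)\le 0$ for almost every $\theta$, or at least that $\int_{\Sp^1}\chi(\Sigma_\theta)\,d\theta\le 0$. By Sard's theorem, almost every $\theta$ is a regular value. For such $\theta$, $\Sigma_\theta$ is a closed, embedded, two-sided surface, since it is the preimage of a point under a submersion near it. It is oriented because $M$ is oriented and the normal $du/|du|$ is a global nonvanishing section of the normal bundle. Moreover, $\Sigma_\theta$ is Poincaré dual to the nontrivial class $u^*[d\theta]\in H^1(\T^3;\Z)$, since $u$ is nontrivial and a harmonic map to $\Sp^1$ that is nonconstant must lie in a nonzero homotopy class. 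We may also take $u$ to be the harmonic representative of a nonzero class.

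I expect the main obstacle to be this step: ruling out sphere components. If some component of $\Sigma_\theta$ were a $2$-sphere $S$, two cases arise.
\begin{enumerate}
\item If $S$ bounds a ball, we discard it, since removing it does not change the homology class. That alone, however, does not bound $\chi$ of the remaining surface.
\item Otherwise $S$ is homologically or homotopically essential, which is impossible: $\pi_2(\T^3)=0$ because the universal cover is $\R^3$, so every embedded sphere in $\T^3$ bounds a ball by Alexander's theorem lifted to $\R^3$.
\end{enumerate}
Stern's argument handles the first case differently. A harmonic $u$ cannot have a level-set component bounding a ball on which $u$ is nonconstant: $u$ lifts to a real-valued harmonic function on the ball, and by the maximum principle it equals $\theta$ on the boundary sphere, hence is constant on the ball. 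That would make $u$ constant on an open set, and by unique continuation constant everywhere, contradicting nontriviality.

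Hence for almost every $\theta$ no component of $\Sigma_\theta$ is a sphere. Every component is then a closed orientable surface of genus $\ge1$, so $\chi(\Sigma_\theta)\le0$. Integrating over $\theta$ and inserting this into Stern's inequality completes the proof.
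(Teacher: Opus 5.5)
Your proposal is correct and is essentially the intended derivation; the paper gives no separate proof and simply cites Stern. You apply Stern's inequality, use $R_g|du|\ge -\Scalminus_g|du|$, and show that regular level sets of a nonconstant harmonic map on $\T^3$ have $\chi(\Sigma_\theta)\le 0$: they are orientable, and any sphere component bounds a ball on which the lifted harmonic function would be constant. As a small simplification, that constancy already contradicts $du\neq 0$ on $S=\partial B$, since $\theta$ is a regular value, so unique continuation is unnecessary and the preliminary case analysis can be dropped.
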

The above inequality leads to a quick proof of the Torus Rigidity Theorem in 3 dimensions.
Before stating and establishing this result, let us recall the following notation.
\begin{notation}
    We will denote by $H^1(\T^n;\Z)_{\R}$ and $H_1(\T^n;\Z)_{\R}$ the lattice of integer valued
    cohomology classes sitting inside $H^{1}(\T^n,\R)$, and the lattice of integral homology classes
    sitting inside $H_1(\T^n;\R)$, respectively.
\end{notation}
\begin{theorem}[\cite{Stern}]
    Let $g$ be a Riemannian metric on $\T^3$ and suppose that
    $R_g\geq 0$, in other words $\Scalminus_g=0$. Then $g$ is flat.
\end{theorem}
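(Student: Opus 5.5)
Since $R_g\geq 0$ means $\Scalminus_g=0$, the left side of Corollary~\ref{cor:3-dim_scal_formula} vanishes for every nontrivial harmonic map $u:(\T^3,g)\to\Sp^1$. Hence $\int_{\T^3}|\nabla^2u|^2/|du|\,dV_g\le 0$, so $\nabla^2 u=0$ almost everywhere on $\{du\neq 0\}$. The plan is to run this for a basis of harmonic maps and thereby produce three parallel, pointwise linearly independent $1$-forms, which forces $g$ to be flat.

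\textbf{Step 1: the harmonic maps.} Take a basis $\alpha_1,\alpha_2,\alpha_3$ of the lattice $H^1(\T^3;\Z)_{\R}$. By Hodge theory each class has a unique $g$-harmonic representative $\omega_j$. Because the class is integral, $\omega_j=u_j^*d\theta$ for a harmonic map $u_j:\T^3\to\Sp^1$. This map is nontrivial since $[\omega_j]\neq 0$.

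\textbf{Step 2: $du_j$ is parallel.} Corollary~\ref{cor:3-dim_scal_formula} with $\Scalminus_g=0$ gives $\nabla\omega_j=\nabla^2u_j=0$ on the open set $U_j=\{\omega_j\neq 0\}$. This is the delicate point, because the integrand $|\nabla^2u|^2/|du|$ is only meaningful off the critical set. I would argue as follows. On each component of $U_j$ the form $\omega_j$ is parallel, so $|\omega_j|$ is locally constant there and nonzero. By continuity $|\omega_j|$ cannot drop to $0$ at a boundary point of such a component. Hence $U_j$ is closed as well as open, and $U_j$ is nonempty since $\omega_j\not\equiv 0$. Connectedness of $\T^3$ then gives $U_j=\T^3$, so $\omega_j$ is a parallel $1$-form everywhere. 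If needed, one can also use that harmonic functions, locally $u_j$ lifted to $\R$, have a critical set of measure zero, so the integral inequality indeed yields $\nabla^2u_j=0$ a.e. and hence everywhere by continuity.

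\textbf{Step 3: flatness.} Parallel forms have constant inner products, so the Gram matrix $\langle\omega_i,\omega_j\rangle$ is constant on $\T^3$. The $\omega_j$ represent linearly independent cohomology classes, so they are linearly independent as forms. Their Gram matrix is therefore nonsingular, and the $\omega_j$ are pointwise linearly independent. Thus we have a global parallel coframe, and consequently $T\T^3$ admits a parallel frame. The curvature then vanishes, since $R(X,Y)\omega_j=0$ for every member of a frame. Hence $g$ is flat.

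The main obstacle is Step 2: extracting the pointwise conclusion $\nabla^2 u=0$ from an integral that degenerates where $du=0$. The open-closed argument above, which relies on the constancy of $|du|$ on components of $\{du\neq0\}$, is what I expect to carry it.
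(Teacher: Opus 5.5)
Your proof is correct, and it reaches flatness by a different route than the paper.

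The paper uses a single nontrivial harmonic map $u$. Once $\nabla^2u=0$, it asserts that $u$ splits $(\T^3,g)$ as $(\Sp^1\times\T^2,d\theta^2+\widetilde g)$, so that $R_g$ is the scalar curvature of the surface factor. It then uses $R_g\geq 0$ a second time, together with Gauss--Bonnet on $\T^2$, to make that factor flat, and hence $g$ flat.

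You instead apply the inequality to all three basis classes and finish with linear algebra. Parallel forms have a constant Gram matrix, which is nonsingular because the classes are independent. This gives a global parallel coframe, so $R\equiv0$.

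The paper's route needs only one harmonic map. However, it relies on a splitting that globally gives only a local product (a priori a mapping torus of an isometry of a level set). It also needs topological input about that level set ($\chi\leq0$) before Gauss--Bonnet applies. Your route needs the inequality for three classes, but it avoids splitting theorems, fibre topology and Gauss--Bonnet. It is also exactly the rigid case ($\delta=0$) of the $(\delta;C,\tau)$-harmonic maps of Definition~\ref{def:good_harmonics}, which the paper itself exploits in Section~\ref{sec:stability}.

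You also handle the critical set, which the paper passes over. One wording fix: phrase the open--closed step for a single component $V$ of $U_j$. Continuity gives $|\omega_j|\equiv c_V>0$ on $\overline V$, so $\overline V\subset U_j$; hence $V$ is clopen and equals $\T^3$. The statement ``$U_j$ is closed'' does not follow directly, since a point of $\overline{U_j}$ could a priori be approached through infinitely many components whose constants tend to $0$.
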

\begin{proof}
    Take an harmonic representative of any nonzero element of $H^1(\T^n;\Z)_{\R}$.
    This can be thought of as $du$ where $u:(\T^3,g)\rightarrow\Sp^1$ is a
    nontrivial harmonic map.
    The inequality above along with the assumption that $\Scalminus_g=0$
    forces the Hessian of $u$ to vanish identically.
    Therefore $u$ splits $(\T^3,g)$ into $(\Sp^1\times\T^2,d\theta^2+\widetilde g)$.
    Since scalar curvature is additive over product metrics, we see that $g$
    must be flat by the Gauss-Bonnet theorem.
\end{proof}
The explicit relationship between scalar curvature and the Hessian of non-trivial harmonic maps
given by Stern's inequality invites one to wonder
whether a stability result holds as well: if scalar curvature is nearly nonnegative,
then is $g$ nearly flat?
This is also sometimes referred to as almost rigidity.
One major obstacle to such a stability result is that
metrics with scalar curvature lower bounds which are not sharp, 
for example $\mathrm{Scal}\geq -\varepsilon$ instead of $\mathrm{Scal}\geq0$ on a torus,
can still exhibit very surprising phenomena. 
This can be seen in the constructions of M-C. Lee, A. Naber, and R. Neumayer in \cite{d_p-convergence_and_epsilon_regularity}, of
D. Kazaras and K. Xu in \cite{kazaras2023drawstringsflexibilitygerochconjecture}, and of P. Sweeney in \cite{examples_scalar_sphere_stability}. 
They each provide a way to perturb a known metric in a way which dramatically alters 
its distance structure, but only decreases the scalar curvature by an arbitrarily small amount.

One way out of this conundrum is to find a notion of convergence which ignores, or cuts out, such pathological behavior.
This is the approach taken in \cite{d_p-convergence_and_epsilon_regularity}, and the approach C. Dong and A. Song take
in their proof of the stability of Euclidean 3 space for the Positive Mass Theorem \cite{Dong-Song}.
Another way is to add an extra condition which serves to limit the type of instabilities which can arise.
The first named author and L. Chen took aspects of both approaches, along with Stern's inequality, to establish
stability for a class of metrics on $\T^{3}$ in \cite{Bryden-Chen}.

For higher dimensional tori,
S. Honda, C. Ketterer, I. Mondello, R. Perales, and C. Rigoni
in \cite{Honda-Ketterer-Christian-Mondello-Perales-Rigoni}
prove an equivalence for spaces with Ricci curvature bounded
below between the existence of a special harmonic map with small
Hessian in the $L^2$ sense and being measure-Gromov-Hausdorff close to
a flat metric on $\T^n$.
For 3-tori with small negative scalar curvature, the existence of a good harmonic map with small Hessian
in the $L^2$ norm is given by Corollary \ref{cor:3-dim_scal_formula},
and so they are able to obtain the stability result in this case.

Our first main result is the following.
\begin{theorem}\label{thm:main_result-torus_stability}
  For $n\in\mathbb{N}$, $\sigma>0$, $D<\infty$ and $K<\infty$,
  let $\mathcal{F}(n,\sigma, D,K)=\mathcal{F}$ denote the family of Riemannian metrics on $\mathbb{T}^{n}$ such that
  \begin{enumerate}[(a)]
    \item the  stable $1$-systole\footnote{See Definition \ref{def:stable_norm_of_an_homology_class} for the precise definition. } of $g$ satisfies $\mathrm{stabsys}_{1}(g)\geq\sigma$;\label{assumption:stablesystolic_lower_bound}
    \item the diameter of $g$ satisfies $\mathrm{diam}(g)\leq D$;\label{assumption:diameter_upper_bound}
    \item the Ricci curvature of $g$ satisfies $\mathrm{Ric}_{g}\geq-K $.\label{assumption:Ricci_lower_bound}
  \end{enumerate}
  Then, for any sequence $g_{i}\in \mathcal{F}$ such that $\|\Scalminus_{g_i}\|_{L^{1}}\rightarrow0$, there is a subsequence
  $g_{i_{j}}$ and a flat metric $g_{\infty}$ on $\mathbb T^n$ such that
  \begin{equation}
    \lim_{j\rightarrow \infty }d_{\mathrm{mGH}}\left(
      \left(\mathbb{T}^{n},d_{g_{i_{j}}},\mathrm{vol}_{g_{i_{j}}}\right),
      \left(\mathbb{T}^{n},d_{g_{\infty }},\mathrm{vol}_{g_{\infty }}\right)
    \right)=0,
  \end{equation}
where $d_{\mathrm{mGH}}$ stands for the measure-Gromov-Hausdorff distance. 
\end{theorem}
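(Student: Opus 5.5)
We would reduce Theorem \ref{thm:main_result-torus_stability} to the criterion of Honda, Ketterer, Mondello, Perales and Rigoni \cite{Honda-Ketterer-Christian-Mondello-Perales-Rigoni}. That criterion says that on a space with $\Ric\geq-K$ and $\mathrm{diam}\leq D$, measured Gromov--Hausdorff closeness to a flat torus is equivalent to the existence of a harmonic map $u=(u_1,\dots,u_n):\T^n\to\T^n$ (equivalently, $n$ harmonic $1$-forms $du_k$ with integral periods) whose Hessian is small in $L^2$ and whose differentials are almost orthonormal up to a fixed linear change.

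Since Stern's inequality is only available in dimension $3$, we would replace it by a spinorial argument. The plan is to twist the spinor bundle of $(\T^n,g_i)$ by the flat $U(1)$-bundles (or flat Hilbert $C^*$-bundle) parametrized by the dual torus $H^1(\T^n;\R)/H^1(\T^n;\Z)_\R$. The index of the resulting family is nonzero, exactly as in the Gromov--Lawson proof \cite{GL1}. Hence for some flat twist the Dirac operator has a nontrivial kernel. Applying the Lichnerowicz formula to a harmonic spinor $\psi$ gives
\[
\int|\nabla\psi|^2=-\tfrac14\int \Scal_g|\psi|^2\leq\tfrac14\int\Scalminus_g|\psi|^2 .
\]

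To pass from this to a small right-hand side, we need an $L^\infty$ bound on $\psi$ relative to its $L^2$ norm. For this we would use Moser iteration via Kato's inequality, relying on the uniform Sobolev constant that follows from $\Ric\geq-K$, $\mathrm{diam}\leq D$, and a volume lower bound. The volume lower bound itself comes from the stable systole bound (a), via the filling/Burago--Ivanov type estimate $\vol\geq c(n)\,\mathrm{stabsys}_1^n$ for tori. We expect the \textbf{main obstacle} to be that $\Scalminus_g$ appears as a potential with only $L^1$ smallness, while Moser iteration needs $L^{p}$ control with $p>n/2$. To get around this, we would first try the refined Kato inequality for harmonic spinors, which allows a De Giorgi-type estimate controlling $\sup|\psi|$ in terms of $\|\psi\|_{L^2}$ with a potential only in a weak space. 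If that fails, we would instead work with $|\psi|^{\alpha}$ for small $\alpha$, or use a cutoff on the set where $\Scalminus_g$ is large. The outcome should be $\|\nabla\psi_i\|_{L^2}\to0$ with $\|\psi_i\|_{L^2}=1$.

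Next we would produce the harmonic functions. Following the Gromov--Lawson construction, we take the $n$ directions of the twisting, or equivalently use Clifford bilinears of several almost parallel spinors (one per character near the kernel). These give $1$-forms $\omega^k_i$ which are almost parallel in $L^2$. Projecting them onto the harmonic representatives of a basis of $H^1(\T^n;\Z)_\R$ should yield harmonic $1$-forms $du_k$ with $\|\nabla du_k\|_{L^2}\to0$, via the Bochner formula and the Ricci lower bound to compare. The stable systole and diameter bounds are what keep the Gram matrix $\int\langle du_k,du_l\rangle$ uniformly bounded and uniformly nondegenerate. Indeed, the stable norm and its dual control the $L^2$ norms of harmonic representatives of integral classes from above and below. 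Hence the matrices stay in a compact subset of positive definite matrices, and after passing to a subsequence they converge to some $G_\infty$.

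Finally, applying \cite{Honda-Ketterer-Christian-Mondello-Perales-Rigoni} to $u_i$ (normalized by $G_i^{-1/2}$) gives mGH closeness of $g_{i_j}$ to the flat torus $\R^n/\Lambda_i$ determined by the period lattice. These lattices converge to the lattice of a flat metric $g_\infty$, which yields the stated limit.
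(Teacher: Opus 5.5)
Your proposal has a genuine gap at the step where the harmonic maps are produced. The family-index argument gives a harmonic spinor $\psi$ of $S\otimes L_{\theta_0}$ only for some character $\theta_0$. That kernel may be one-dimensional and contained in $S^{+}\otimes L_{\theta_0}$. For such a chiral $\psi$ the bilinear $X\mapsto\mathrm{Im}\langle\psi,c(X)\psi\rangle$ vanishes identically, because $c(X)$ exchanges $S^{+}$ and $S^{-}$. At other characters there are no harmonic spinors. Moreover, bilinears pairing spinors at two characters $\theta\neq\theta'$ are $1$-forms with values in the nontrivial flat bundle $\overline{L_{\theta}}\otimes L_{\theta'}$, not ordinary $1$-forms. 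So nothing in your plan produces $n$ independent almost parallel $1$-forms.

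Even granting such forms, ``projecting onto harmonic representatives'' needs more than you supply. You would have to bound the coexact part of an almost parallel form $\omega$ in $L^2$ by $\|d\omega\|_{L^2}$, i.e.\ have a uniform lower bound for the Hodge Laplacian on coexact $1$-forms. The Bochner formula does not give this when $\Ric_g$ may be negative. (The exact part is fine: it is controlled by $\|\delta\omega\|_{L^2}$ via the Poincar\'e inequality.)

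The missing idea, which is the heart of the paper, is to go the other way: fix an arbitrary harmonic $1$-form $\omega$ and Clifford-multiply the almost parallel spinor by it.
\begin{itemize}
\item Since $(d+\delta)\omega=0$, the commutator formula gives $D(\omega\cdot\psi)=-2\nabla_{\omega^\sharp}\psi$. Lichnerowicz applied to $\omega\cdot\psi$ then makes $\|\nabla(\omega\cdot\psi)\|_{L^2}$ small.
\item From $(\nabla\omega)\cdot\psi=\nabla(\omega\cdot\psi)-\omega\cdot\nabla\psi$ and $|v\cdot\psi|=|v||\psi|$ you get $\int|\psi|^2|\nabla\omega|^2\le C_{\mathcal F}\|\omega\|_{L^\infty}^2\|\Scalminus_g\|_{L^1}$.
\item The weight is removed by writing $\omega=|\psi|^2\omega-(|\psi|^2-1)\omega$. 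The second term has the same $d$ and $\delta$ as the first, because $\omega$ is harmonic.
\item That second term also has small $L^2$ norm, by Poincar\'e, since $|\psi|^2$ has mean $1$ and $\|d|\psi|^2\|_{L^2}\le 2\|\psi\|_{L^\infty}\|\nabla\psi\|_{L^2}$. The integrated Bochner formula therefore controls its gradient.
\end{itemize}
This gives \cref{prop:unweighted-hessian} for every harmonic $1$-form, in particular for the harmonic representatives of an integral basis. It needs neither several spinors nor a spectral gap. Your phrase about ``the $n$ directions of the twisting'' comes closest to this: the $\theta_j$-derivative of $D_\theta$ is Clifford multiplication by a harmonic form $2\pi i\,a_j$. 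However, you never extract the estimate from it.

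With this step inserted, the rest of your plan goes through, and your flat-twist construction is a legitimate simplification of the paper's. A nonzero family index over the dual torus does force $\ker D_{\theta_0}\neq0$ for some $\theta_0$. Lichnerowicz then has no twisting-curvature term. Because you stay on the base torus, whose diameter is bounded, you avoid the paper's $k^n$-fold covers, the $k^{-2}$ error and deck-averaging.

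Finally, the ``main obstacle'' you name is not one. $\Ric_g\ge-K$ gives $\Scalminus_g\le nK$ pointwise, so Kato plus Moser iteration bounds $\|\psi\|_{L^\infty}$ by $C_{\mathcal F}\|\psi\|_{\overline L^2}$ directly; this is exactly how the paper bounds $\rho_k$. The $L^1$ smallness of $\Scalminus_g$ is only used afterwards, paired against that $L^\infty$ bound.
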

We would like to thank Shouhei Honda for pointing out the following corollary and
its proof; see Definition~\ref{def:stable_norm_of_an_homology_class} for the
necessary background.
\begin{corollary}
    Let $D,K>0$. Consider a sequence of Riemannian metrics $g_{i}$ on $\T^n$ such that
    $\displaystyle\sup_{i}\textrm{diam}(g_i)\leq D$, $\Ric_{g_i}\geq -K$ for all $i$, 
    and $\displaystyle\lim_{i\to\infty}\|\Scalminus_{g_i}\|_{L^1(g_i)}=0$, then the following are equivalent:
\end{corollary}
\begin{enumerate}
        \item there is a subsequence that
    measure Gromov-Hausdorff converges to a flat $n$-dimensional torus $(\T^n,g_{\infty})$; 
    \item there is a subsequence whose stable $1$-systoles are uniformly bounded below by a positive number. 
    \end{enumerate}
\begin{proof}
    From \cite[Theorem 1.1]{Honda2023} we know that for any $\varepsilon\in(0,1)$
    there exists a $\delta\in(0,1)$ such that if a compact $\textrm{RCD}(K,N)$
    space, say $(X,d,m)$, is $\delta$-Gromov-Hausdorff close to an $n$-dimensional
    Riemannian manifold $(M^{n},g)$, which as a metric-measure space is the triple
    $(M^{n},d_{g},\textrm{vol}_{g})$, then there exists a homeomorphism $F$
    from $X$ to $M^{n}$ such that for all $x,y\in X$ we have
    \[
    (1-\varepsilon)d(x,y)^{1+\varepsilon}\leq d_{g}(F(x),F(y))\leq(1+\varepsilon)
    d(x,y).
    \]
    Now consider a sequence of Riemannian manifolds $(M_{i},g_{i})$ with 
    a uniform lower bound on their Ricci curvatures.
    Furthermore, suppose that $(M_i,g_{i})$ measure Gromov-Hausdorff converges
    to a Riemannian manifold $(N,g_{\infty})$.
    Fix any $\varepsilon=(0,1)$; by the above there exists an $i_{0}$ such that
    for all $i\geq i_0$ there is an homeomorphism $F_{i}:M_{i}\to N$ such that
    for all $x,y\in M_{i}$ we have $d_{g_{\infty}}(F(x),F(y)) \leq(1+\varepsilon)d_{g_i}(x,y)$.
    Suppose that $\gamma_{i}$ represents a nontrivial element of $H_1(M_{i},\Z)_{\R}$.
    Since $F_{i}$ is an homeomorphism the element $F(\gamma_{i})$ is
    of course a nontrivial element of $H_{1}(N,\Z)_{\R}$.
    Furthermore, since $F_{i}$ is $(1+\varepsilon)$-Lipschitz, we see that
    $\textrm{vol}(F(\gamma_{i}))\leq(1+\varepsilon)\textrm{vol}(\gamma_{i})$.
    This shows that 
    $\textrm{stabsys}_{1}(N,g_{\infty})\leq\liminf_{i\to\infty}\textrm{stabsys}_{1}(M_{i},g_{i})$,
    and so one direction of the equivalence is taken care of.
    The other direction is an application of Theorem~\ref{thm:main_result-torus_stability}.
\end{proof}

The approach taken in the present paper  will be to study harmonic maps in conjunction with harmonic spinors.
Let us recall an inequality for harmonic spinors arising from Lichnerowicz's formula.
\begin{lemma}
    Let $g$ be a Riemannian metric on $\T^n$, $S$ the associated spinor bundle, and $E_0$
    an Hermitian vector bundle on the standard round sphere $\Sp^n$ such that
    $\langle\mathrm{ch}_n(E_0),[\Sp^n]\rangle\neq0$.
    Consider a smooth map with nonzero degree, say
    $\phi:(\T^n,g)\rightarrow(\Sp^n,g_{\mathrm{round}})$, with
    $\mathrm{Lip}(\phi)\leq\varepsilon$, and set $E=\phi^*E_0$.
    Then, there is a nontrivial harmonic spinor $s$ of the twisted bundle $S\otimes E$ (that is, $D^E(s) =0$ for the twisted Dirac operator  $D^E$  associated with the bundle $S\otimes E$) and it satisfies the inequality
    \begin{equation}
        \int_{\T^n}|\nabla s|^2dV_g\leq
        \int_{\T^n}
        \left(\tfrac{1}{4}\Scalminus_g+C_{E_{0}}\varepsilon^2\right)|s|^2dV_g.
    \end{equation}
where $C_{E_0}$ is a positive constant independent of $g$ and $\phi$.
\end{lemma}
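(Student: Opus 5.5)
The plan has two parts: produce a nontrivial harmonic spinor by an index computation, and then integrate the twisted Lichnerowicz formula against it.

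\emph{Existence of $s$.} I would use the Atiyah--Singer index theorem for the twisted Dirac operator $D^E$ on $S\otimes E$. When $n$ is even, the index of $D^E$ restricted to the positive half-spinors is
\[
\int_{\T^n}\hat A(\T^n)\,\mathrm{ch}(E).
\]
The torus is parallelizable, so all its Pontryagin classes vanish and $\hat A(\T^n)=1$. The index therefore reduces to $\langle \mathrm{ch}_n(\phi^*E_0),[\T^n]\rangle$. By naturality this equals $\deg(\phi)\,\langle\mathrm{ch}_n(E_0),[\Sp^n]\rangle$, which is nonzero by hypothesis. Hence $\ker D^E\neq0$.

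When $n$ is odd, $\Sp^n$ carries no bundle with $\mathrm{ch}_n\neq0$ (rational cohomology in degree $n$ is not hit by even Chern characters). So I would either interpret $E_0$ appropriately, or reduce to the even case by passing to $\T^n\times \Sp^1$ with the composed map $\T^n\times\Sp^1\to \Sp^n\wedge\Sp^1=\Sp^{n+1}$, using the product metric $g+r^2d\theta^2$ with $r$ large. The harmonic spinor found there can then be averaged in $\theta$, or one can argue via the odd index/spectral flow. I expect this dimensional bookkeeping to be the main technical nuisance. The analytic heart of the lemma is the even-dimensional case.

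\emph{The inequality.} For $s\in\ker D^E$, the twisted Lichnerowicz formula reads
\[
(D^E)^2=\nabla^*\nabla+\tfrac14\Scal_g+\mathcal R^E,
\]
where $\mathcal R^E=\tfrac12\sum_{i,j} e_ie_j\otimes F^E(e_i,e_j)$ is built from the curvature $F^E$ of the pulled-back connection. Integrating $\langle (D^E)^2 s,s\rangle$ over the closed manifold $\T^n$ gives
\[
0=\int_{\T^n}|\nabla s|^2+\tfrac14\Scal_g|s|^2+\langle\mathcal R^E s,s\rangle\, dV_g .
\]
Two estimates finish the argument:
\begin{itemize}
\item $\Scal_g\ge-\Scalminus_g$ pointwise;
\item $F^E=\phi^*F^{E_0}$, so $|F^E(e_i,e_j)|\le \|F^{E_0}\|_\infty\,|d\phi(e_i)|\,|d\phi(e_j)|\le \|F^{E_0}\|_\infty\,\varepsilon^2$.
\end{itemize}
Summing over at most $n^2$ pairs and bounding the Clifford actions by $1$ yields
\[
|\langle\mathcal R^E s,s\rangle|\le C_{E_0}\,\varepsilon^2|s|^2,
\]
with $C_{E_0}=\tfrac{n^2}{2}\|F^{E_0}\|_\infty$. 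This depends only on $E_0$ (and its fixed connection) and on $n$, not on $g$ or $\phi$. Rearranging the integrated identity gives the claimed inequality.
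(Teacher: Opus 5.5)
Your proposal is correct and follows the paper's own route (carried out in Section~4 for the maps $\varphi^k$ with $\varepsilon=C_{\mathcal F}/k$): you get $\ker D^E\neq0$ from Atiyah--Singer using $\hat A(\T^n)=1$ and naturality of $\mathrm{ch}$ under $\phi$, and you then integrate the twisted Lichnerowicz formula with $|\mathcal R^E|\le C_{E_0}\varepsilon^2$, which holds because $F^E=\phi^*F^{E_0}$. For odd $n$ the hypothesis $\langle\mathrm{ch}_n(E_0),[\Sp^n]\rangle\neq0$ cannot hold, so the lemma is vacuous there and no reduction is needed; your $\theta$-averaging idea would in any case not give a $D^E$-harmonic spinor on $\T^n$, since the twisting bundle on $\T^n\times\Sp^1$ varies with $\theta$, and the paper uses $\T^n\times\Sp^1$ only to reduce the odd case of the main theorem, not of this lemma.
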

\begin{remark}
  Stern's formula can be viewed as a version of Lichnerowiz's formula which applies to harmonic maps instead of 
  harmonic spinors.
\end{remark}
The formula above is closely related to the starting point for the proof of the Torus Rigidity Theorem using spinors,
see \cite[Chapter IV Theorem 5.5]{lawson2016spin}.
We can now give an overview of the proof of Theorem \ref{thm:main_result-torus_stability}.
Let $\pi^{k}:\T^{n}\rightarrow \T^{n}$ be the $k^{n}$-fold covering map, and let $g^{k}=(\pi^{k})^{*}g$ be its pullback
metric.
In Section \ref{sec:uniform_enlargeability} we show that there is a constant $C_{\mathcal{F}}$ so that for any metric
in the family $\mathcal{F}$ one can find harmonic spinors $s^{k}$ on $(\T^{n},g^{k})$ such that
\begin{equation}
  \int_{\T^{n}}|\nabla s^{k}|^{2}dV_{g^{k}}\leq 
  \int_{\T^{n}}\left(\frac{1}{4}\Scalminus_{g}+C_{\mathcal{F}}k^{-2}\right)\lvert s^{k} \rvert^{2}dV_{g^{k}}.
\end{equation}
In Section \ref{sec:scal_bounds_harmonic_1-forms} we let $\omega$ be an arbitrary harmonic 1-form on $(\T^{n},g)$ and
multiply the harmonic spinor $s^{k}$ by $\omega^{k}=(\pi^{k})^{*}\omega$.
This produces a new spinor with the property that $\|\nabla(\omega^{k}\cdot s^{k})\|_{L^{2}(g)}$
is controlled in terms of $\|\Scalminus_{g}\|_{L^{1}(g)}$ and $k^{-2}$, although it generally will
not be harmonic. The Clifford multiplication can be partially reversed, and so one obtains control of $|\nabla \omega^{k}|$ in terms of
$|s^k|$, $|\nabla s^{k}|$, and $|\nabla (\omega^{k}\cdot s^{k})|$. Since $\nabla \omega^{k}$ is invariant under covering transformations,
averaging out the above relationship leads to information on $\nabla \omega$ over $(\T^{n},g)$.
Under the assumption of a Ricci curvature lower bound, this turns out to be enough to bound $\|\nabla \omega\|_{L^{2}(g)}$ in terms of 
$\|\omega\|_{L^{\infty }}$ and $\|\Scalminus_{g}\|_{L^{1}(g)}$.
In Section \ref{sec:stability} we use the above results to construct a degree 1 harmonic map 
$\U:(\T^{n},g)\rightarrow(\T^{n},g_{0})$ with $\|\nabla d\U\|_{L^{2}(g)}$ controlled in terms of $\|\Scalminus_g\|_{L^{1}(g)}$,
where $(\mathrm{T}^{n},g_{0})$ is the torus with the standard product metric $g_{0}$.
Then, we explicitly show that this map satisfies the conditions of
\cite[Theorem 1.4]{Honda-Ketterer-Christian-Mondello-Perales-Rigoni},
which completes the proof.

Our second main result concerns the stability of the Positive Mass Theorem.  In dimension three, H. Bray, D. Kazaras, M. Khuri, and D. Stern obtained
a mass inequality involving the Hessians of asymptotically linear harmonic functions
\cite{Bray-Kazaras-Khuri-Stern}. D. Kazaras, M. Khuri, and D. Lee used this inequality to prove
pointed Gromov--Hausdorff stability under a uniform Ricci curvature lower bound
\cite{Kazaras:2021rfv}. Later, C. Dong and A. Song proved pointed measured
Gromov--Hausdorff stability without a Ricci curvature lower bound after excising subsets whose
boundary areas tend to zero, and using the induced length metric on what remains \cite{Dong-Song}.
In higher dimensions J. Klemmensen \cite{PMT-stab_Kaehler} proved the stability of the 
Positive Mass Theorem for K{\"a}hler manifolds of all dimensions; there is both
a version with and without a uniform Ricci curvature lower bound.

In this paper, we prove the stability of
the Positive Mass Theorem for spin Riemannian manifolds in all
dimensions, under the assumption of  a uniform lower bound on
Ricci curvatures. See \cref{def:b-tau-asymflat} for the precise meaning of undefined terms in the theorem below. 
\begin{theorem}\label{thm:pmt-stability-intro}
	Fix an integer $n\ge3$, constants $b>0$, $\tau>(n-2)/2$ and $K\ge0$,
	and a point
	\[
	\mathbf p\in\R^n\setminus\overline B^{\mathbb E}_1(0).
	\]
	Let $(M_i,g_i)$ be a sequence of $n$ dimensional, connected, complete, asymptotically flat
  spin manifolds without boundary, each having finitely many asymptotically flat ends
  $\mathcal E_{i,0},\ldots,\mathcal E_{i,N_i}$.  Suppose the distinguished end
  $\mathcal E_{i,0}$ has a chart $\Phi_i:=\Phi_{i,0}$ which is
  $(b,\tau)$-asymptotically flat.   Assume
	\begin{equation}\label{eq:curvature-assumptions-intro}
		\Scal_{g_i}\ge0,
		\qquad
		\Ric_{g_i}\ge-K g_i,
	\end{equation}
	and set $p_i:=\Phi_i^{-1}(\mathbf p)$.  If the ADM mass of the distinguished end $\mathcal E_{i, 0}$  converges to zero,
	then 	$(M_i,d_{g_i},p_i)$ converges to the Euclidean space $(\R^n,d_{\mathbb E},0)$ in the pointed Gromov-Hausdorff sense.
\end{theorem}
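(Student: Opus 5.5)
The plan is to follow the torus strategy: replace Stern's inequality by a spinorial identity (Witten's mass formula), use it to produce harmonic coordinates with small $L^2$ Hessian, and conclude with a Cheeger–Colding type almost-splitting argument.

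\textbf{Step 1: Witten spinors.} On each $(M_i,g_i)$ fix an orthonormal basis of constant spinors $\psi_0^\alpha$ on the chart $\Phi_i$. Since $\Scal_{g_i}\ge0$ and $\tau>(n-2)/2$, Witten's method gives harmonic spinors $\psi_i^\alpha$ with $D\psi_i^\alpha=0$ that are asymptotic to $\psi_0^\alpha$ on $\mathcal E_{i,0}$ and decay on the other ends. The Lichnerowicz integration by parts then yields
\[
\int_{M_i}\Bigl(|\nabla\psi_i^\alpha|^2+\tfrac14\Scal_{g_i}|\psi_i^\alpha|^2\Bigr)dV_{g_i}=c_n\, m_{\mathrm{ADM}}(\mathcal E_{i,0}),
\]
so $\|\nabla\psi_i^\alpha\|_{L^2}\to0$.

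\textbf{Step 2: almost-harmonic coordinates.} From the spinors I will build the $1$-forms $\omega_i^j=\langle e_j\cdot\psi_i^\alpha,\psi_i^\beta\rangle$-type expressions (the Dirac-current construction), or alternatively use the harmonic functions $u_i^j$ asymptotic to $x^j$. I would rather use spinor bilinears directly, so the paper's Section 3 idea applies: Clifford multiplication is partially reversible. The estimate I aim for is, on every ball $B_R(p_i)$, some frame of $1$-forms with
\[
\|\nabla\omega_i^j\|_{L^2(B_R(p_i))}\to0,\qquad \omega_i^j\to dx^j \text{ near infinity}.
\]
The uniform control of $|\psi_i|$ needed here should come from a sup bound. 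The Ricci lower bound gives Moser iteration for the subharmonic function $|\psi|$ (Kato inequality: $\Delta|\psi|\ge -\tfrac14\Scal^-|\psi|=0$), so $|\psi_i|\le1+o(1)$ by the maximum principle together with asymptotics. For the lower bound on $|\psi_i|$ and the near-orthonormality of the frame, I would use $\int|\nabla\psi|^2\to0$ and the Poincaré inequality on balls, which relies on the Ricci bound. If the bilinears are not closed, I would Hodge-project them to closed forms, i.e. solve for harmonic $u^j$ with $\Delta u^j=\operatorname{div}\omega^j$ locally.

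\textbf{Step 3: pass to the limit.} By Gromov precompactness (volume comparison from $\Ric\ge-K$, noncollapsing near $p_i$ from the asymptotically flat chart, since $|\mathbf p|>1$ places $p_i$ in the controlled region), a subsequence converges in the pointed measured GH sense to an $\RCD(-K,n)$ space $(X,d,\mathfrak m,p)$. The maps $U_i=(u_i^1,\dots,u_i^n)$ are locally Lipschitz with uniform constants (Cheng–Yau gradient estimate) and have $L^2$-small Hessian with $\langle du^j,du^k\rangle\to\delta^{jk}$ in $L^2$ on every ball. By Cheeger–Colding almost splitting (equivalently the $\RCD$ version used in \cite{Honda-Ketterer-Christian-Mondello-Perales-Rigoni}), each ball $B_R(p_i)$ is GH close to a Euclidean ball, with the closeness tending to $0$. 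Hence $X=\R^n$, and the limit map is the identification, sending $p$ to $\mathbf p$ up to translation, which we normalize to $0$.

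\textbf{Main obstacle.} The hard part is the uniform-in-$i$ estimates on balls of fixed radius that do not stay in the asymptotic chart, where the ends may carry large mass. The mass being the only global quantity, I must show that the $L^2$ Hessian smallness localizes and that $|du_i|$ does not degenerate, e.g. via a lower bound on $|\psi_i|$ on $B_R(p_i)$. My first attempt is to propagate the lower bound from infinity: $|\psi|$ is nearly constant in $W^{1,2}$ on large balls, and the Ricci bound gives a segment inequality that transfers this to pointwise control along almost every path to infinity.
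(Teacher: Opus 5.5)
\textbf{There is a genuine gap between your Step 2 and Step 3.} Your endgame, Cheeger--Colding almost splitting, needs on each ball $B_R(p_i)$ functions $u_i^1,\dots,u_i^n$ with three properties: uniformly bounded gradients, $\int_{B_R(p_i)}|\Hess u_i^j|^2\to0$, and $\langle du_i^j,du_i^k\rangle\to\delta^{jk}$ in $L^2$. Nothing in your plan produces such functions.

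Your fallback of harmonic functions asymptotic to $x^j$ does not help. In dimensions $n\ge4$ there is no analogue of the Bray--Kazaras--Khuri--Stern identity bounding their Hessians by the mass; this is exactly the obstruction the paper names.

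The spinor bilinears do give $1$-forms $\omega_i^j$ with $\|\nabla\omega_i^j\|_{L^2(M_i)}\le 2\|\psi_i\|_{L^\infty}\|\nabla\psi_i\|_{L^2}\to0$. But these forms are not closed, and your ``Hodge projection'' has no estimates behind it. As written it is also inconsistent: $u^j$ cannot be both harmonic and satisfy $\Delta u^j=\diver\omega^j$.
\begin{enumerate}
\item To deduce that $du^j-\omega^j$ is small from the smallness of $d(du^j-\omega^j)=-d\omega^j$, you would need a uniform Gaffney/Poincar\'e inequality for co-closed forms on balls. That inequality fails in the presence of harmonic fields, i.e.\ nontrivial first cohomology of $M_i$ or of the balls, over which you have no control.
\item You would also need boundary elliptic estimates to turn $L^2$ control of $\nabla\omega^j$ into $L^2$ control of $\Hess u^j$.
\end{enumerate}
Even in the limit, turning a parallel field into a gradient requires its dual form to be exact, which is topological information you do not have. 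So Step 3 has nothing to act on.

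\textbf{How the paper gets around this.} The paper never constructs functions. It passes the Dirac-current vector fields $W_i^a$ themselves to the noncollapsed $\RCD(-K,n)$ limit, using Honda's $L^2$-convergence theory. The limit fields satisfy $\nabla W^a=0$, $\diver W^a=0$, and are orthonormal a.e. Writing $\nabla f=\sum_a\langle\nabla f,W^a\rangle W^a$ for test functions $f$, it verifies the Bochner inequality $\mathrm{BE}(0,n)$ directly, so the limit is $\RCD(0,n)$. Topology is then ruled out without gradients: the uniform $(b,\tau)$-control of the chart, plus volume convergence, gives $\limsup_{r\to\infty}\cH^n(B_r(p))/(\omega_n r^n)\ge1$. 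Volume rigidity for noncollapsed $\RCD(0,n)$ spaces then identifies the limit with $\R^n$.

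\textbf{Your ``main obstacle'' is not the real difficulty.} Because the Witten spinors tend to $0$ on the other ends, only $m_{\mathrm{ADM}}(\mathcal E_{i,0})$ enters the mass formula, so $\|\nabla\psi_i\|_{L^2(M_i)}\to0$ globally. The maximum principle gives $|\psi_i|\le1$. Almost-orthonormality on $B_R(p_i)$ follows in $L^2$ from the Poincar\'e inequality applied to $g(W^a,W^b)-\delta_{ab}$, which tends to $0$ at infinity of $\mathcal E_{i,0}$. No pointwise lower bound on $|\psi_i|$ is needed.
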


The paper is organized as follows. Section~\ref{sec:background} reviews stable systoles, lattices,
and some geometric and analytic tools associated with Ricci curvature lower bounds. Section~\ref{sec:uniform_enlargeability}
establishes uniform enlargeability for the torus metrics under consideration. Section~\ref{sec:scal_bounds_harmonic_1-forms}
uses twisted harmonic spinors to derive estimates for harmonic $1$-forms, and Section~\ref{sec:stability}
uses these estimates to prove the torus stability theorem. Section~\ref{sec:pmt-stability}
adapts $\RCD$ limit space techniques to the spin setting to proves the stability theorem for the Positive
Mass Theorem for spin Riemannian manifolds in all
dimensions, under the assumption of  a uniform lower bound on
Ricci curvatures.. 

We would like to thank Shouhei Honda and Antoine Song for helpful discussions. We also thank the Simons Center for Geometry and Physics for its hospitality, where this work was initiated.

\section{Background}\label{sec:background}
Since we will encounter many constants whose precise values are not important, but whose dependence on certain
parameters is, we will follow the convention indicated below.
\begin{notation}
  We will denote by $C\left(X_{1},\dots,X_{m}\right)$ a constant which depends only on the parameters $X_{1},\dots,X_{m}$.
  The precise value of such a constant may change from line to line.
  Furthermore, if two or more constants of this form are combined, say $C(X_{1},X_{2})$ and $C(X_{3},X_{4})$,
  then we will denote the new combined constant by $C\left(X_{1},\dots,X_{4}\right)$.
  We may abuse notation slightly, and occasionally write something of the form $C\left(n,KD^{2}\right)$.
  Of course, we have three parameters here: $n,K$, and $D$. So we should write $C(n,K,D)$, however the point is
  that $C(n,KD^{2})$ only depends on the value of $n$ and $KD^{2}$, something which $C(n,K,D)$ does not indicate.
  Finally, if a family of objects, say $\mathcal{F}$, is defined in terms of parameters $X_{1},\dots,X_{m}$,
  then we also write $C_{\mathcal{F}}$ in place of $C\left(X_{1},\dots,X_{m}\right)$ to indicate constants involved in 
  estimating quantities associated to the members of $\mathcal{F}$.
\end{notation}

Let us review some results about systoles, lattices, and spaces with Ricci curvature lower bounds
and diameter upper bounds that are key to the proof of Theorem \ref{thm:main_result-torus_stability}.
To start, we review the definition of the stable norm of a real homology class.

\begin{definition}[{See \cite[Def 12.1.4]{katz2007systolic} and \cite[\S 4.C]{Gromov-metric_structures}}]
\label{def:stable_norm_of_an_homology_class}
  Let $(M,g)$ be a Riemannian manifold, let $\triangle^{k}$ denote the standard $k$-simplex in $\R^{k}$, 
  and let $c$ be a real $k$-dimensional Lipschitz $k$-chain, that is there are real numbers $r_{i}$
  and Lipschitz maps $\sigma_{i}:\triangle^{k}\rightarrow M$ and $c$ is the sum
  \[
    \sum_{i}r_{i}\sigma_{i}.
  \]

  We define the volume of $c$ as follows:
  \begin{equation}
    \mathrm{vol}_{k}(c)=\sum_{i}|r_{i}|\mathrm{vol}_{\sigma_{i}^{*}g}\left(\triangle^{k}\right).
  \end{equation}
  For a $k$-cycle $c$, we denote its homology class by $[c]$.
  Then, the \textit{stable mass norm} of a real homology class $\alpha\in H_{k}(M;\R)$ is given by
  \begin{equation}
    \|\alpha\|_{\mathrm{M}}:=\inf\bigl\{\mathrm{vol}_{k}(c): c \textup{ is a real Lipschitz $k$-cycle  and } [c]=\alpha\bigr\}.
  \end{equation}
  Finally, the \textit{stable k-systole}, denoted $\mathrm{stabsys}_{k}(M,g)$, is defined to be
  \begin{equation}
    \mathrm{stabsys}_{k}(M,g)=\min\bigl\{\|\alpha\|_{\mathrm{M}}:\alpha \neq 0\in H_{k}(M;\Z)_{\R}\subset H_{k}(M;\R)\bigr\},
  \end{equation}
  where $H_{k}(M;\Z)_{\R}$ denotes the lattice of integer $k$-homology classes sitting inside of
  the real $k$-homology classes.
\end{definition}
For the present work, we are only interested in $\mathrm{stabsys}_{1}(M,g)$.
The importance of this quantity to us lies in its relationship to $H^{1}(M;\R)$ given by
the Kronecker pairing
\[
  \langle\cdot,\cdot\rangle:H_{1}(M;\R)\times H^{1}(M;\R)\rightarrow \R.
\]
In particular, this pairing induces a norm on $H^{1}(M;\R)$ dual to the stable mass norm. This dual norm is 
the \textit{comass norm}, whose definition we recall now.
\begin{definition}
  For a $p$-form $\omega\in\Omega^{p}(M)$, define its pointwise comass by
  \begin{equation}
    |\omega|^{*}(x)
    :=\sup\left\{|\omega_{x}(e_{1},\dots,e_{p})|:
      \begin{array}{c}
        (e_{1},\dots,e_{p})\text{ is an orthonormal $p$-frame}\\
        \text{in }T_{x}M
      \end{array}
    \right\},
  \end{equation}
  and set $|\omega|^{*}:=\sup_{x\in M}|\omega|^{*}(x)$.
  The \emph{comass norm} of a class $a\in H^{p}(M;\R)$ is
  \begin{equation}
    \|a\|^{*}_{\mathrm{M}}
    :=\inf\bigl\{|\omega|^{*}: \omega\in\Omega^{p}(M),\ d\omega=0,\ [\omega]=a\bigr\}.
  \end{equation}
\end{definition}
Observe that when $p=1$ the comass norm and the $L^{\infty }$ norm agree, a fact we will use without further mention.
In general, we have the following:
\begin{proposition}[{See \cite[\S 1.8: Mass and Comass]{GMT_Fed}}]
  The comass norm is equivalent to the sup norm. That is, for each dimension $n$ and $1\leq p\leq n$ 
  for all $\omega\in \Omega^{p}(M)$ we have
  \begin{equation}
    \binom{n}{p}^{-\frac{1}{2}}\|\omega\|_{L^{\infty }}\leq \lvert \omega \rvert^{*}\leq \|\omega\|_{L^{\infty }}.
  \end{equation}
\end{proposition}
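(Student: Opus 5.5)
This is a pointwise statement about a single covector, so it reduces to linear algebra on $\Lambda^{p}T_{x}^{*}M$ at each point $x$; taking suprema over $x$ then gives the global inequality. I fix $x$ and an orthonormal basis $e_{1},\dots,e_{n}$ of $T_{x}M$. I write $\omega_{x}=\sum_{I}\omega_{I}e^{I}$, summed over increasing multi-indices $I$ of length $p$. With the standard inner product on forms, the $e^{I}$ are orthonormal, so
\[
|\omega_{x}|^{2}=\sum_{I}\omega_{I}^{2}.
\]
There are $\binom{n}{p}$ such multi-indices.

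For the lower bound on $|\omega|^{*}$, I test the supremum on the coordinate frames $(e_{i_{1}},\dots,e_{i_{p}})$. Each of these is an orthonormal $p$-frame, so $|\omega|^{*}(x)\geq\max_{I}|\omega_{I}|$. Since a sum of $\binom{n}{p}$ squares is at most $\binom{n}{p}$ times the largest, we get $|\omega_{x}|\leq\binom{n}{p}^{1/2}\max_{I}|\omega_{I}|$. Together these give $\binom{n}{p}^{-1/2}|\omega_{x}|\leq|\omega|^{*}(x)$.

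For the upper bound, I take an arbitrary orthonormal $p$-frame $(v_{1},\dots,v_{p})$ and note that
\[
\omega_{x}(v_{1},\dots,v_{p})=\langle\omega_{x},v_{1}^{\flat}\wedge\cdots\wedge v_{p}^{\flat}\rangle .
\]
This is the key step. I complete the frame to an orthonormal basis of $T_{x}M$; in that basis $v_{1}^{\flat}\wedge\cdots\wedge v_{p}^{\flat}$ is one of the basis elements $e^{I}$, so it has norm $1$. Cauchy–Schwarz then gives $|\omega_{x}(v_{1},\dots,v_{p})|\leq|\omega_{x}|$, and taking the supremum over frames yields $|\omega|^{*}(x)\leq|\omega_{x}|$.

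Finally I take the supremum over $x\in M$, which turns both pointwise inequalities into the claimed ones with $\|\omega\|_{L^{\infty}}$. The only care needed is the normalization convention on $\Lambda^{p}$: the norm must be the one making $e^{I}$ orthonormal, equivalently $|\omega|^{2}=\frac{1}{p!}\sum\omega_{i_{1}\cdots i_{p}}^{2}$ over all index tuples. This convention is implicit in the statement, and with a different convention the constants would change.
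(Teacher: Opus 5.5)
Your proof is correct. Testing the comass on the coordinate frames gives $|\omega|^{*}(x)\geq\max_I|\omega_I|\geq\binom{n}{p}^{-1/2}|\omega_x|$, Cauchy--Schwarz against the unit simple covector $v_1^\flat\wedge\cdots\wedge v_p^\flat$ gives $|\omega|^{*}(x)\leq|\omega_x|$, and taking suprema over $x$ finishes; the paper gives no proof of its own and only cites Federer \S 1.8, whose argument is essentially this same pointwise comparison of the comass with the Euclidean norm on $\Lambda^p$ via unit simple $p$-vectors.
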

The elements of $H^{1}(M;\Z)_{\R}\subset H^{1}(M;\R)$ are of particular use because
they can be represented by smooth maps $u:M\rightarrow \Sp^{1}$.
A natural idea is to combine multiple maps to the circle to get a map to the torus.
In order to get a handle on these maps, it is important to understand how their behavior
is influenced by the geometry of $(M,g)$.
We will show in Section \ref{sec:uniform_enlargeability} that the stable 1-systole 
will control the Lipschitz norms of these maps.
The key is that the lattice dual to $H_{1}(M;\Z)_{\R}$
is precisely $H^{1}(M;\Z)_{\R}$:
\[
  \left\{a\in H^{1}(M;\R):\langle\sigma,a\rangle\in \Z, ~ \forall \sigma\in H_{1}(M;\Z)_{\R}\right\}=H^{1}(M;\Z)_{\R}.
\]
To make all of this work, we require a few results on how lattices interact with arbitrary norms, which can be found in
\cite{cassels2012introduction}.
We begin by recalling the definition of the successive minima of a lattice.
\begin{definition}[{See \cite[Ch. VIII.1]{cassels2012introduction}}]\label{def:successive_minima}
  Let $\Lambda\subset \R^{n}$ be an $n$-dimensional lattice, and let $\|\cdot\|$ be a norm on
  $\R^{n}$. We define the $i^{th}$ successive minima of $\Lambda$ with respect to $\|\cdot\|$,
  denoted $\lambda_{i}$, as follows:
  \begin{equation}
    \lambda_{i}
    :=\inf\left\{\lambda>0:
      \begin{array}{c}
        \text{there exist linearly independent }a_{1},\dots,a_{i}\in\Lambda\\
        \text{such that }\|a_{s}\|\leq\lambda\text{ for }s=1,\dots,i
      \end{array}
    \right\}.
  \end{equation}
\end{definition}
\begin{remark}
  In \cite[Ch. VIII.1]{cassels2012introduction} successive minima are defined with respect to  distance functions, a class more
  general than norms.
\end{remark}
To get a quick sense for why successive minima are important,
consider the lattice $H_{1}(M;\Z)_{\R}$ with the mass norm.
Following the definitions, we see that $\lambda_{1}$ is the stable 1-systole in this case.
We now state a few results concerning successive minima, lattices, and norms.
\begin{theorem}[{\cite[Theorem VI, Ch. VIII \S 5]{cassels2012introduction}}]\label{thm:bounds_dual_minima}
  Let $\lambda_{1},\dots,\lambda_{n}$ be the successive minima of a lattice $\Lambda$ with respect to a symmetric convex 
  distance-function $F$ (in particular a norm) and let $\lambda_{1}^{*},\dots,\lambda_{n}^{*}$ be the successive minima
  of the polar lattice (referred to here as the dual lattice) with respect to the distance-function $F^{*}$ polar to $F$
  (in particular the dual norm).
  Then
  \begin{equation}
    1\leq \lambda_{j}\lambda^{*}_{n+1-j}\leq n!
  \end{equation}
\end{theorem}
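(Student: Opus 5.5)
The plan is to prove the two inequalities separately. The lower bound comes from a dimension count plus integrality of the pairing. The upper bound comes from Minkowski's second theorem applied to both $\Lambda$ and its polar lattice $\Lambda^{*}$, together with a lower bound for the product of the volumes of a symmetric convex body and its polar. Throughout, write $K=\{F\leq 1\}$ and $K^{*}=\{F^{*}\leq 1\}=\{y:\ |\langle x,y\rangle|\leq 1 \ \forall x\in K\}$. The key facts are that $|\langle x,y\rangle|\leq F(x)F^{*}(y)$, and that $\langle a,b\rangle\in\Z$ for $a\in\Lambda$, $b\in\Lambda^{*}$. Since $\Lambda$ and $\Lambda^{*}$ are discrete, all successive minima are attained.

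\emph{Lower bound.} Fix $j$. Choose linearly independent $a_{1},\dots,a_{j}\in\Lambda$ with $F(a_{s})\leq\lambda_{j}$. Choose linearly independent $b_{1},\dots,b_{n+1-j}\in\Lambda^{*}$ with $F^{*}(b_{t})\leq\lambda^{*}_{n+1-j}$. The annihilator of $\mathrm{span}(a_{1},\dots,a_{j})$ has dimension $n-j$, so it cannot contain all $n+1-j$ of the $b_{t}$. Hence some pair satisfies $\langle a_{s},b_{t}\rangle\neq0$. Because this pairing is a nonzero integer, we get
\[
1\leq|\langle a_{s},b_{t}\rangle|\leq F(a_{s})F^{*}(b_{t})\leq\lambda_{j}\lambda^{*}_{n+1-j}.
\]

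\emph{Upper bound.} Minkowski's second theorem gives $\lambda_{1}\cdots\lambda_{n}\,\mathrm{vol}(K)\leq 2^{n}\det\Lambda$ and $\lambda^{*}_{1}\cdots\lambda^{*}_{n}\,\mathrm{vol}(K^{*})\leq 2^{n}\det\Lambda^{*}=2^{n}/\det\Lambda$. Multiplying these yields
\[
\prod_{j=1}^{n}\lambda_{j}\lambda^{*}_{n+1-j}\leq\frac{4^{n}}{\mathrm{vol}(K)\,\mathrm{vol}(K^{*})}.
\]
Suppose we know the Mahler-type inequality $\mathrm{vol}(K)\,\mathrm{vol}(K^{*})\geq 4^{n}/n!$. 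Then the full product is at most $n!$. By the lower bound, every factor is at least $1$, so each single factor $\lambda_{j}\lambda^{*}_{n+1-j}$ is at most $n!$, which is the claim.

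\emph{Main obstacle: $\mathrm{vol}(K)\mathrm{vol}(K^{*})\geq4^{n}/n!$.} An elementary argument gives a weaker bound. Choose $u_{1},\dots,u_{n}\in K$ maximizing $|\det(u_{1},\dots,u_{n})|$, and let $u_{i}^{*}$ be the dual basis. By maximality, $K\subset\{|\langle x,u_{i}^{*}\rangle|\leq1\ \forall i\}$. Hence $K$ contains $\mathrm{conv}(\pm u_{i})$ and $K^{*}$ contains $\mathrm{conv}(\pm u_{i}^{*})$, which gives only $\mathrm{vol}(K)\mathrm{vol}(K^{*})\geq4^{n}/(n!)^{2}$. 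This loses a factor of $n!$. To obtain the sharp $4^{n}/n!$, I would follow Mahler's argument as presented in Cassels: compare $K^{*}$ with the region $\{y:\sum_{i}|\langle u_{i},y\rangle|\leq 1\}$-type bodies and integrate the support function over $K$ in polar coordinates. Alternatively, I would simply cite Mahler's theorem in the form stated in \cite[Ch.~VIII]{cassels2012introduction}. Since the statement is quoted from that reference, I would cite it there rather than reproduce the volume-product estimate in full.
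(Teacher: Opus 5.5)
Your lower bound is correct; it is the standard dimension-count argument. The upper bound has a genuine gap. The inequality $\mathrm{vol}(K)\,\mathrm{vol}(K^{*})\geq 4^{n}/n!$ that you plan to cite is not available in Cassels or anywhere else. It is exactly the symmetric Mahler conjecture, which is settled only for $n\leq 3$ and is open for $n\geq 4$.

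What is actually known is weaker:
\begin{enumerate}
\item Mahler's elementary bound $4^{n}/(n!)^{2}$, which you rederived;
\item the Bourgain--Milman bound $c^{n}4^{n}/n!$, with an absolute constant $c<1$.
\end{enumerate}
Fed into your product estimate, these give only $\lambda_{j}\lambda^{*}_{n+1-j}\leq (n!)^{2}$, respectively $c^{-n}n!$. So bounding all $n$ factors at once through the volume product cannot currently produce the constant $n!$. The $(n!)^{2}$ version would still suffice for the paper's use in Lemma~\ref{lem:bounded_cohomology_basis}, where only some constant $C(n)$ is needed, but it does not prove the statement as written.

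To get $n!$, apply Minkowski's second theorem to $\Lambda$ alone and build the dual vectors by hand.
\begin{enumerate}
\item Take linearly independent $a_{1},\dots,a_{n}\in\Lambda$ with $F(a_{i})=\lambda_{i}$. Put $M=\Z a_{1}+\dots+\Z a_{n}$ and $m=[\Lambda:M]$, so that $|\det(a_{1},\dots,a_{n})|=m\det\Lambda$. Let $a_{1}^{*},\dots,a_{n}^{*}$ be the dual basis.
\item Since $m\Lambda\subseteq M$, every $m a_{k}^{*}$ lies in $\Lambda^{*}$.
\item Fix $k$ and let $x=\sum_{i}c_{i}a_{i}\in K$. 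Then $K$ contains the cross-polytope with vertices $\pm x$ and $\pm a_{i}/\lambda_{i}$ for $i\neq k$. Comparing its volume with the upper half of Minkowski's second theorem gives
\[
\frac{2^{n}}{n!}\,\frac{|c_{k}|\,\lambda_{k}\,m\det\Lambda}{\lambda_{1}\cdots\lambda_{n}}\leq\mathrm{vol}(K)\leq\frac{2^{n}\det\Lambda}{\lambda_{1}\cdots\lambda_{n}}.
\]
Hence $F^{*}(m a_{k}^{*})=m\sup_{x\in K}|c_{k}|\leq n!/\lambda_{k}$.
\item The $n+1-j$ vectors $m a_{j}^{*},\dots,m a_{n}^{*}\in\Lambda^{*}$ are linearly independent and satisfy $F^{*}\leq n!/\lambda_{k}\leq n!/\lambda_{j}$. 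Therefore $\lambda^{*}_{n+1-j}\leq n!/\lambda_{j}$.
\end{enumerate}
This route needs no volume-product inequality at all.
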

\begin{theorem}[{\cite[Lemma 1 Ch. VII, \S 1.2]{cassels2012introduction}}]\label{thm:vectors_realizing_minima}
  Let $\|\cdot\|$ be a norm on $\R^{n}$, let $\Lambda$ be a lattice in $\R^{n}$, and let $\lambda_{i}$
  be the successive minima of $\Lambda$ with respect to $\|\cdot\|$.
  Then, there exist a collection of $n$ linearly independent points $\{a_{i}\}_{1}^{n}\subset \Lambda$
  such that 
  \begin{equation}
    \|a_{i}\|=\lambda_{i}.
  \end{equation}
  Furthermore, if $a\in \Lambda$ and $\|a\|<\lambda_{i}$, then $a$ is linearly dependent on $\{a_{l}\}_{l=1}^{i}$.
\end{theorem}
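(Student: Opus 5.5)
The plan is a greedy induction. Its only analytic input is that a lattice contains finitely many points in any bounded set. Since all norms on $\R^{n}$ are equivalent, each set $\{a\in\Lambda:\|a\|\leq R\}$ lies in a Euclidean ball. A lattice is discrete, so this set is finite. Consequently, whenever the set of lattice vectors satisfying some condition is nonempty, the minimum of $\|\cdot\|$ over that set is attained. Indeed, if some admissible vector has norm $R$, the infimum is a minimum over the finitely many admissible vectors of norm $\leq R$.

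First I construct the vectors. Let $a_{1}$ be a nonzero lattice vector of minimal norm. Inductively, given linearly independent $a_{1},\dots,a_{i-1}$ with $i\leq n$, set $V_{i-1}=\mathrm{span}(a_{1},\dots,a_{i-1})$. Since $\Lambda$ spans $\R^{n}$, some lattice vector lies outside $V_{i-1}$. Let $a_{i}$ be a vector of minimal norm in $\Lambda\setminus V_{i-1}$. By construction, $a_{1},\dots,a_{n}$ are linearly independent. The norms are nondecreasing: $a_{i}\notin V_{i-2}\subset V_{i-1}$, so $a_{i}$ was a competitor at step $i-1$, which gives $\|a_{i-1}\|\leq\|a_{i}\|$.

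Next I show $\|a_{i}\|=\lambda_{i}$.
\begin{itemize}
\item For $\lambda_{i}\leq\|a_{i}\|$: the vectors $a_{1},\dots,a_{i}$ are $i$ linearly independent lattice vectors. By monotonicity, all of them have norm $\leq\|a_{i}\|$. So $\|a_{i}\|$ belongs to the set whose infimum defines $\lambda_{i}$ in Definition~\ref{def:successive_minima}.
\item For $\|a_{i}\|\leq\lambda_{i}$: fix $\lambda>\lambda_{i}$. Then there are $i$ linearly independent lattice vectors of norm $\leq\lambda$. They cannot all lie in the $(i-1)$-dimensional space $V_{i-1}$, so one of them, say $b$, lies in $\Lambda\setminus V_{i-1}$. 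Minimality of $a_{i}$ gives $\|a_{i}\|\leq\|b\|\leq\lambda$. Letting $\lambda\downarrow\lambda_{i}$ gives the claim.
\end{itemize}

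Finally, the dependence statement follows from the minimality in the construction. Suppose $a\in\Lambda$ and $\|a\|<\lambda_{i}=\|a_{i}\|$. If $a\notin V_{i-1}$, then $a$ would be a competitor at step $i$ with norm strictly less than $\|a_{i}\|$, which is a contradiction. Hence $a\in V_{i-1}\subset\mathrm{span}(a_{1},\dots,a_{i})$. This is in fact slightly stronger than claimed. For $i=1$ the argument reads: $\|a\|<\lambda_{1}$ forces $a=0$.

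No step is a real obstacle. The only point needing care is the attainment of the minima, which is exactly the discreteness/finiteness observation made at the start. Everything else is bookkeeping with the definition of $\lambda_{i}$.
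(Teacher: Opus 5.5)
Your proof is correct and complete. It uses the greedy choice of a shortest lattice vector outside $\mathrm{span}(a_1,\dots,a_{i-1})$, with the minima attained because a lattice has only finitely many points in any bounded set; this is the standard argument for the lemma, which the paper only quotes from Cassels without proving it. Your dependence conclusion, $a\in\mathrm{span}(a_1,\dots,a_{i-1})$, is slightly sharper than the one stated and implies it.
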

\begin{lemma}[{\cite[Lemma 8, Ch. V, \S 4]{cassels2012introduction}}]\label{lem:basis_from_sub_lattice_basis}
  Let $\|\cdot\|$ be a norm on $\R^{n}$ and $\{a_{i}\}_{i=1}^{n}$ be $n$ linearly independent points of a
  lattice $\Lambda$.
  Then, there exists a basis $\{b_{i}\}_{i=1}^{n}$ of $\Lambda$ for which
  \begin{equation}
    \|b_{j}\|\leq\max\left\{\|a_{j}\|,\frac{1}{2}\sum_{i=1}^{j}\|a_{i}\|\right\}.
  \end{equation}
\end{lemma}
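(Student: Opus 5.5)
The plan is to first build a basis of $\Lambda$ that is ``triangular'' with respect to the flag spanned by the $a_i$, and then reduce each basis vector modulo the previous ones to make its coefficients small. For $0\le j\le n$ set $V_j=\mathrm{span}_{\R}(a_1,\dots,a_j)$ and $\Lambda_j=\Lambda\cap V_j$.

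\emph{Step 1: a triangular basis.} Each $\Lambda_j$ is a lattice of rank $j$ in $V_j$, since it contains the independent vectors $a_1,\dots,a_j$ and is discrete. The quotient $\Lambda_j/\Lambda_{j-1}$ embeds into $V_j/V_{j-1}\cong\R$ as a discrete nonzero subgroup. Hence it is infinite cyclic, and we pick $b_j\in\Lambda_j$ whose class generates it. By induction on $j$, the vectors $b_1,\dots,b_j$ form a $\Z$-basis of $\Lambda_j$; in particular $b_1,\dots,b_n$ is a basis of $\Lambda_n=\Lambda$. This construction has a useful flexibility: $b_j$ may be replaced by any element of $b_j+\Lambda_{j-1}$, or by $-b_j$, and the resulting family is still a basis of $\Lambda$.

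\emph{Step 2: coefficients of $b_j$.} Since $a_j\in\Lambda_j$, we can write $a_j=v\,b_j+w$ with $v\in\Z\setminus\{0\}$ and $w\in\Lambda_{j-1}$; here $v\neq0$ because $a_j\notin V_{j-1}$. Because $a_1,\dots,a_j$ is an $\R$-basis of $V_j$, we may write
\[
b_j=\sum_{i=1}^{j}t_i a_i,\qquad t_j=1/v .
\]
There are now two cases.
\begin{enumerate}
\item If $|v|=1$, we replace $b_j$ by $\pm a_j$, which lies in $\pm b_j+\Lambda_{j-1}$ and so keeps the basis property. This gives $\|b_j\|=\|a_j\|$.
\item If $|v|\ge2$, then $|t_j|\le\tfrac12$. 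Since $a_1,\dots,a_{j-1}\in\Lambda_{j-1}$, we can replace $b_j$ by $b_j-\sum_{i<j}m_i a_i$ with integers $m_i$ chosen so that $|t_i-m_i|\le\tfrac12$. This stays in $b_j+\Lambda_{j-1}$ and does not change $t_j$. The triangle inequality then gives
\[
\|b_j\|\le \tfrac12\sum_{i=1}^{j}\|a_i\| .
\]
\end{enumerate}
In either case we get $\|b_j\|\le\max\{\|a_j\|,\tfrac12\sum_{i\le j}\|a_i\|\}$.

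\emph{Step 3: order of the modifications.} Each modification of $b_j$ alters only $b_j$, and only within its coset modulo $\Lambda_{j-1}$. It therefore does not affect the other $b_k$ or the basis property, so the modifications can be carried out for all $j$ simultaneously. The only point needing care is Step 1, namely that $\Lambda_j/\Lambda_{j-1}$ is infinite cyclic. This follows from discreteness of $\Lambda$ together with $\Lambda_{j-1}=\Lambda_j\cap V_{j-1}$, which makes the quotient torsion-free; the rest of the argument is elementary.
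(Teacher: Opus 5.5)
Your proof is correct and is essentially the standard argument in Cassels, which the paper cites without reproducing: take a basis triangular with respect to the flag $V_1\subset\cdots\subset V_n$, set $b_j=\pm a_j$ when the diagonal coefficient is $\pm1$, and otherwise reduce the $a$-coordinates of $b_j$ to $|t_i|\le\tfrac12$. The one step to tighten is $\Lambda_j/\Lambda_{j-1}\cong\Z$, which torsion-freeness alone does not give, but which follows once you add that the quotient is finitely generated of rank $j-(j-1)=1$ (equivalently, the image of $\Lambda_j$ in $V_j/V_{j-1}$ is discrete because $\Lambda_{j-1}$ is cocompact in $V_{j-1}$, not merely because $\Lambda$ is discrete).
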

\begin{corollary}\label{cor:existence_of_bounded_basis}
  Let $\|\cdot\|$ be a norm on $\R^{n}$, let $\Lambda$ be a lattice in $\R^{n}$, and let $\lambda_{i}$
  be the successive minima of $\Lambda$ with respect to $\|\cdot\|$.
  Then, there is a basis $\{b_{j}\}_{j=1}^{n}$ of $\Lambda$ such that
  \begin{equation}
    \|b_{j}\|\leq j\lambda_{j}.
  \end{equation}
\end{corollary}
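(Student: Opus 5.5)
The plan is to combine Theorem~\ref{thm:vectors_realizing_minima} with Lemma~\ref{lem:basis_from_sub_lattice_basis}. First I apply Theorem~\ref{thm:vectors_realizing_minima} to the lattice $\Lambda$ and the norm $\|\cdot\|$. This gives $n$ linearly independent lattice points $a_1,\dots,a_n\in\Lambda$ that realize the successive minima exactly:
\[
\|a_i\|=\lambda_i \qquad (1\le i\le n).
\]
By the definition of successive minima (Definition~\ref{def:successive_minima}), these values are nondecreasing, $\lambda_1\le\lambda_2\le\dots\le\lambda_n$. This holds because any $i+1$ independent vectors of norm at most $\lambda$ contain $i$ independent vectors of norm at most $\lambda$.

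Next, since the $a_i$ are $n$ linearly independent points of $\Lambda$, Lemma~\ref{lem:basis_from_sub_lattice_basis} produces a basis $\{b_j\}_{j=1}^n$ of $\Lambda$ with
\[
\|b_j\|\le\max\Bigl\{\lambda_j,\ \tfrac12\sum_{i=1}^{j}\lambda_i\Bigr\}.
\]
By monotonicity, $\sum_{i=1}^{j}\lambda_i\le j\lambda_j$. Hence the second entry of the maximum is at most $\tfrac{j}{2}\lambda_j\le j\lambda_j$, and the first entry satisfies $\lambda_j\le j\lambda_j$ since $j\ge1$. Together these give $\|b_j\|\le j\lambda_j$, which is the claim.

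There is no real obstacle here. The only points to check are that the $\lambda_i$ are attained, which Theorem~\ref{thm:vectors_realizing_minima} supplies (so we need not pass to an infimum), and the monotonicity of the successive minima. In fact the argument gives the slightly sharper bound $\max\{1,j/2\}\lambda_j$, though the stated $j\lambda_j$ is all that is needed.
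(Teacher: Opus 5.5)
Your argument is correct and matches the paper's approach: the paper gives no written proof and presents this as an immediate consequence of Theorem~\ref{thm:vectors_realizing_minima} and Lemma~\ref{lem:basis_from_sub_lattice_basis}, combined via the monotonicity $\lambda_1\le\cdots\le\lambda_n$, exactly as you do. Your remark that the argument actually yields the sharper bound $\max\{1,j/2\}\lambda_j$ is also right.
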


Let us now move on to recalling some of the control one obtains on metrics with a lower bound on their
Ricci curvature and an upper bound on their diameter.


\begin{theorem}[{\cite[Theore 7.1.13]{petersen2016riemannian}}]\label{thm:Poincare_lower_bound-Ricci_lower_bound}
  Let $K\geq0$ and $(M,g)$ be a Riemannian manifold satisfying $\Ric_{g}\geq -K$. Denote by $D$ the diameter of $(M,g)$.
  Then there is a constant $C(n,KD^{2})$ such that for all $R\leq D$ and $\nu\in \left[1,\tfrac{n}{n-1}\right]$, the following holds.
  Set  $|B(x, R)|:=\mathrm{vol}_{g}(B(x, R))$, and define
  \[
    u_{B(x, R)}:=\frac{1}{|B(x, R)|}\int_{B(x, R)}u\,dV_{g}=\dashint_{B(x,R)}u\,dV_{g}.
  \]
  Then
  \begin{equation}
    \left(\dashint_{B(x,R)}|u-u_{B(x,R)}|^{\nu}dV_{g}\right)^{\frac{1}{\nu}}\leq
    C(n,KD^{2})R\,\dashint_{B(x,R)}|du|dV_{g}.
  \end{equation}
\end{theorem}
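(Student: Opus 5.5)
The plan is to reduce the statement to the case $\nu=\tfrac{n}{n-1}$ and then combine two standard consequences of $\Ric_g\geq -K$ and $R\leq D$: relative volume comparison and a $(1,1)$-Poincaré inequality. The general case follows from the endpoint case. Indeed, for $\nu\in[1,\tfrac{n}{n-1}]$, Hölder's inequality with respect to the probability measure $|B(x,R)|^{-1}dV_g$ gives
\[
\Bigl(\dashint_{B(x,R)}|u-u_{B(x,R)}|^{\nu}\Bigr)^{1/\nu}\leq\Bigl(\dashint_{B(x,R)}|u-u_{B(x,R)}|^{n/(n-1)}\Bigr)^{(n-1)/n}.
\]
Throughout, $u$ is taken smooth (or Lipschitz), so all integrals make sense.

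\emph{Step 1 (volume comparison).} By Bishop--Gromov with $\Ric_g\geq -K$, every ball $B(y,s)\subset B(x,r)$ with $r\leq D$ satisfies
\[
\frac{|B(y,s)|}{|B(x,r)|}\geq c(n,KD^{2})\Bigl(\frac{s}{r}\Bigr)^{n}.
\]
This holds because the ratio of volumes of model balls in the space form of curvature $-K/(n-1)$ is comparable to $(s/r)^n$ up to a factor depending only on $n$ and $\sqrt{K}\,r\leq\sqrt{K}\,D$. In particular $(M,d_g,\mathrm{vol}_g)$ is doubling at scales $\leq 2D$, with doubling constant $C(n,KD^2)$ and lower mass exponent $n$.

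\emph{Step 2 ($(1,1)$-Poincaré).} Next I would prove
\[
\dashint_{B}|u-u_{B}|\,dV_g\leq C(n,KD^{2})\,R\dashint_{B}|du|\,dV_g
\]
on balls $B=B(x,R)$. I would first prove the weak version, with $\lambda B$ on the right, via the Cheeger--Colding segment inequality. Write $|u(y)-u(z)|\leq\int_\gamma|du|$ along minimal geodesics and average over $(y,z)\in B\times B$. The segment inequality, which again needs only Bishop--Gromov, bounds this double average by $C R\dashint_{2B}|du|$. Alternatively, Buser's isoperimetric argument gives the Neumann inequality on balls directly. Since $(M,d_g)$ is a geodesic space, the Jerison/Hajłasz--Koskela Whitney-covering argument removes the dilation $\lambda$ at the cost of a constant depending only on the doubling constant. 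This yields the strong form on $B(x,R)$ itself.

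\emph{Step 3 (self-improvement to $\nu=\tfrac{n}{n-1}$; main obstacle).} With doubling, lower mass exponent $n$, and a $(1,1)$-Poincaré inequality on all balls of radius $\leq D$ inside $B(x,R)$, I would apply the Hajłasz--Koskela Sobolev--Poincaré theorem. The chaining argument over balls $B(y,2^{-j}R)$ shrinking to each point gives a pointwise Riesz-potential bound
\[
|u(y)-u_B|\lesssim\sum_j 2^{-j}R\dashint_{B(y,2^{-j}R)}|du|.
\]
Step 1 converts this into a fractional-integral estimate of order $1$ in dimension $n$. That estimate gives only a weak-type $L^{n/(n-1),\infty}$ bound when the right side is $L^1$. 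This endpoint is the step I expect to be delicate. To recover the strong $L^{n/(n-1)}$ bound, I would use the Maz'ya truncation trick: apply the weak-type estimate to the truncations
\[
v_k=\min\bigl\{\max\{|u-m|-2^k,0\},2^k\bigr\},
\]
where $m$ is a median of $u$ on $B$, and sum over $k$. This works because $|dv_k|$ are supported on disjoint level bands and the Poincaré inequality is stable under truncation. Finally, replacing the median by the mean costs a factor of $2$. All constants depend only on $n$ and $KD^2$, which gives the stated inequality.
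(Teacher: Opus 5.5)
The paper gives no argument for this statement: it is quoted from Petersen's textbook and used as a black box. In the torus part it is only used with $R=\mathrm{diam}(g)$, i.e.\ as a global inequality on $\T^n$. So your proposal is an independent reconstruction, and its architecture is sound:
\begin{itemize}
\item Bishop--Gromov at scales $\le 2D$ gives doubling and the lower mass bound with exponent $n$, with constants depending only on $n$ and $KD^2$.
\item The Cheeger--Colding segment inequality gives a weak $(1,1)$ Poincar\'e inequality.
\item The Haj{\l}asz--Koskela theory upgrades this to the $(\tfrac{n}{n-1},1)$ Sobolev--Poincar\'e inequality.
\item H\"older covers the rest of the range of $\nu$.
\end{itemize}
Compared with the citation, your route makes explicit that only three things are used: doubling with lower exponent $n$, the geodesic structure of balls, and a weak $(1,1)$ inequality. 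These ingredients survive on $\RCD(-K,n)$ spaces, in keeping with the last section of the paper. The price is heavy machinery and a delicate $L^1$ endpoint.

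As written, though, Step 3 needs three corrections.

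First, your chain $B(y,2^{-j}R)$ is concentric at $y$. For $y$ near $\partial B(x,R)$ these balls leave $B(x,R)$, so the Poincar\'e inequalities on them bring in $|du|$ on $B(x,2R)$. So does the comparison of $u_{B(y,R)}$ with $u_{B(x,R)}$. This re-creates the dilation you removed in Step 2. Chain instead along a minimal geodesic $\gamma$ from $y$ to $x$. The point of $\gamma$ at distance $t$ from $y$ lies at distance more than $t$ from $M\setminus B(x,R)$. Hence balls of radius $t/(2\lambda)$ centred there have their $\lambda$-dilates inside $B(x,R)$; in other words, balls in geodesic spaces are John domains. With that chain, the weak inequality from the segment inequality already suffices, and the dilation removal in Step 2 becomes redundant.

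Second, suppose ``converting into a fractional integral of order $1$ in dimension $n$'' means using $|B(y,\rho)|\ge c(\rho/R)^n|B|$ and then the standard weak-type bound for the kernel $d(y,z)^{1-n}$. That bound needs the reverse estimate $|B(z,\rho)|\le C(\rho/R)^n|B|$. This fails under collapse, for example on flat $\Sp^1_\varepsilon\times\T^{n-1}$ with $\varepsilon\to0$, and it would make your constant depend on a volume lower bound. There are two fixes. You can keep the kernel $d(y,z)/|B(y,d(y,z))|$; its weak $L^{n/(n-1)}$ norm in $y$ is bounded by $C(n,KD^2)\,R\,|B|^{-1/n}$ using doubling and the lower mass bound alone. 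Alternatively, run the Haj{\l}asz--Koskela covering argument on the chain balls where $\dashint|du|$ is large.

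Third, in the truncation step treat $(u-m)^+$ and $(u-m)^-$ separately, so that each truncation vanishes on at least half of the ball. Truncations of $|u-m|$ need not vanish on a large set. That vanishing is exactly what lets the weak-type bound for $v_k-(v_k)_B$ control $|\{v_k\geq 2^k\}|$.
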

\begin{remark}
  In what follows, it will be convenient to define two related norms.
  First, we set
  \begin{equation}
    \|f\|_{\overline{L}^{p}(M,g)}=\left(\dashint_{M}|f|^{p}\right)^{\frac{1}{p}},
  \end{equation}
  and $f_{M}=\tfrac{1}{\mathrm{vol}_{g}(M)}\int_{M}f=\dashint_{M}f$.
  Second, we set
  \[
    \|f\|_{L^p(M,g)}=\left(\int_{M}|f|^{p}\right)^{\frac{1}{p}}.
  \]
\end{remark}
\begin{proposition}[{\cite[Proposition 7.1.17]{petersen2016riemannian}}]\label{prop:global_Poincare_from_Ricci_diameter_and_volume}
  Let $(M,g)$ be a Riemannian manifold such that for $s>1$ all smooth functions satisfy the inequality
  \begin{equation}
    \|u-u_{M}\|_{\overline{L}^{\frac{s}{s-1}}(M,g)}\leq S\|du\|_{\overline{L}^{1}(M,g)},
  \end{equation}
  then for $1\leq p<s$ we have
  \begin{equation}
    \|u\|_{\overline{L}^{\frac{sp}{s-p}}(M,g)}\leq\frac{p(s-1)}{s-p}S\|du\|_{\overline{L}^{p}(M,g)}+\|u\|_{\overline{L}^{p}(M,g)}.
  \end{equation}
\end{proposition}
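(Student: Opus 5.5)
The statement is a Sobolev-type inequality derived from a weak (1,\(s/(s-1)\))-Poincaré inequality. My plan is to reduce it to the Poincaré hypothesis applied to a power of \(|u|\), then absorb the resulting term.

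Set \(q=\tfrac{sp}{s-p}\) and \(\gamma=\tfrac{p(s-1)}{s-p}\). A quick check shows that \(\gamma\cdot\tfrac{s}{s-1}=q\) and \((\gamma-1)\tfrac{p}{p-1}=q\) when \(p>1\). Also \(\gamma\ge1\), since \(p\geq1\).

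\begin{itemize}
\item[(1)] Apply the hypothesis to \(v=|u|^{\gamma}\). This function is Lipschitz, which suffices by approximation, and \(|dv|=\gamma|u|^{\gamma-1}|du|\). By the triangle inequality,
\[
\|v\|_{\overline{L}^{s/(s-1)}}\leq S\|dv\|_{\overline{L}^{1}}+|v_M| \leq \gamma S\,\dashint|u|^{\gamma-1}|du|+\dashint|u|^{\gamma}.
\]

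\item[(2)] The left side equals \(\|u\|_{\overline{L}^{q}}^{\gamma}\). For \(p>1\), Hölder's inequality with exponents \(\tfrac{p}{p-1}\) and \(p\) gives
\[
\dashint|u|^{\gamma-1}|du|\leq \|u\|_{\overline{L}^{q}}^{\gamma-1}\|du\|_{\overline{L}^{p}}.
\]
For \(p=1\) we have \(\gamma=1\), so this step is trivial.

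\item[(3)] Handle the lower-order term \(\dashint|u|^\gamma\). Since \(p\le\gamma\le q\), interpolation (log-convexity of \(L^r\) norms on a probability space) gives \(\|u\|_{\overline{L}^\gamma}^\gamma\leq\|u\|_{\overline L^p}^{\theta\gamma}\|u\|_{\overline L^q}^{(1-\theta)\gamma}\). However, this is not clean enough to produce the exact stated form.
\end{itemize}

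The cleaner route, which I would try first, is to normalize: by homogeneity, assume \(\|u\|_{\overline{L}^q}=1\). Then dividing by \(\|u\|_{\overline L^q}^{\gamma-1}\) in (1)–(2) gives
\[
\|u\|_{\overline L^q}\le \gamma S\|du\|_{\overline L^p}+\frac{\|u\|_{\overline L^\gamma}^\gamma}{\|u\|_{\overline L^q}^{\gamma-1}}.
\]
Interpolation with \(\tfrac1\gamma=\tfrac\theta p+\tfrac{1-\theta}q\) gives \(\theta\gamma=p\cdot\tfrac{q-\gamma}{q-p}\). One checks that \(q-\gamma=\tfrac{p(p-1)}{s-p}\) and \(q-p=\tfrac{p^2}{s-p}\), so \(\theta\gamma=p-1\cdot\tfrac{p}{p}\)… Explicitly, \(\theta\gamma=p-1\) and \((1-\theta)\gamma=\gamma-p+1\). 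Hence
\[
\frac{\|u\|_{\overline L^\gamma}^\gamma}{\|u\|_{\overline L^q}^{\gamma-1}}\le \|u\|_{\overline L^p}^{p-1}\|u\|_{\overline L^q}^{2-p}.
\]
This does not immediately give \(\|u\|_{\overline L^p}\).

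\emph{Main obstacle.} Getting exactly \(+\|u\|_{\overline L^p}\) rather than some mixed term is the hardest step. To reduce to it, I would first apply the argument to \(u-c\), where \(c\) is chosen so that the mean term is controlled. Alternatively, I would apply Young's inequality to \(\|u\|_{\overline L^p}^{p-1}\|u\|_{\overline L^q}^{2-p}\) (valid when \(1\le p\le2\)) and absorb. If this fails, I would fall back on following Petersen's proof verbatim, as the statement is quoted from \cite[Proposition 7.1.17]{petersen2016riemannian}.
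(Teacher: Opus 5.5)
\textbf{The gap is in step (3).} Your steps (1) and (2) are the standard argument, which is Petersen's proof; the paper only cites it. You apply the hypothesis to $|u|^{\gamma}$ with $\gamma=\frac{p(s-1)}{s-p}$ and use $|v_M|\le\|v\|_{\overline{L}^{1}}$. You then pair $|u|^{\gamma-1}$ with $|du|$ by H\"older with exponents $\frac{p}{p-1}$ and $p$, using $(\gamma-1)\frac{p}{p-1}=q$. For $p>1$, however, you never bound $\dashint|u|^{\gamma}$ in the required form, and you conclude the interpolation route fails only because of an arithmetic slip. In fact
\[
q-\gamma=\frac{sp-p(s-1)}{s-p}=\frac{p}{s-p},
\]
not $\frac{p(p-1)}{s-p}$. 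Together with $q-p=\frac{p^{2}}{s-p}$, this gives $\theta\gamma=\frac{p(q-\gamma)}{q-p}=1$ and $(1-\theta)\gamma=\gamma-1$, so interpolation yields exactly
\[
\dashint|u|^{\gamma}\le\|u\|_{\overline{L}^{p}}\,\|u\|_{\overline{L}^{q}}^{\gamma-1}.
\]
More directly, this is H\"older applied to $|u|^{\gamma}=|u|\cdot|u|^{\gamma-1}$ with the same exponents $p$ and $\frac{p}{p-1}$ you already used on the gradient term. Inserting it and step (2) into step (1) gives
\[
\|u\|_{\overline{L}^{q}}^{\gamma}\le\bigl(\gamma S\|du\|_{\overline{L}^{p}}+\|u\|_{\overline{L}^{p}}\bigr)\|u\|_{\overline{L}^{q}}^{\gamma-1}.
\]
Dividing by $\|u\|_{\overline{L}^{q}}^{\gamma-1}$ (the case $\|u\|_{\overline{L}^{q}}=0$ is trivial) gives the statement with exactly the claimed constants.

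\textbf{Your fallbacks would not rescue the argument.} Applying Young's inequality to your (miscomputed) bound $\|u\|_{\overline{L}^{p}}^{p-1}\|u\|_{\overline{L}^{q}}^{2-p}$ and absorbing only works for $1<p\le 2$. It also worsens the gradient coefficient to $\frac{\gamma}{p-1}S$, which blows up as $p\to1$, so it does not give the stated inequality. Shifting $u$ by a constant is unnecessary. As written, your proposal proves only the case $p=1$, where $\gamma=1$ and step (1) is already the claim. For $p>1$ the decisive estimate above is missing, although it is one line once the exponent is computed correctly.
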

It follows from Theorem~\ref{thm:Poincare_lower_bound-Ricci_lower_bound} that a family of Riemannian
manifolds with a uniform upper bound on their diameters and a uniform lower bound on their Ricci curvatures
will satisfy the hypothesis of Proposition~\ref{prop:global_Poincare_from_Ricci_diameter_and_volume}
with a uniform constant $S$.
This observation also applies to the following lemma.
\begin{lemma}[Moser iteration {\cite[Thm 9.2.7]{petersen2016riemannian}}]\label{lem:Moser_it}
  Take $\nu\in \left(1,\frac{n}{n-2}\right]$, $\lambda>0$, and let $(M,g)$ be a compact $n$-dimensional Riemannian manifold 
  such that for some $S>0$ all smooth functions on $M$ satisfy
  \begin{equation}
    \|u\|_{\overline{L}^{2\nu}(M,g)}\leq S\|du\|_{\overline{L}^{2}(M,g)}+\|u\|_{\overline{L}^{2}(M,g)}
  \end{equation}
  If $f:M\rightarrow [0,\infty )$ is continuous, smooth on $\{f>0\}$, and satisfies
  $\Delta_{g}f\geq-\lambda f$, then
  \begin{equation}
    \|f\|_{L^{\infty }}\leq\exp\left(\frac{C(S)\sqrt{\lambda\nu}}{\sqrt{\nu}-1}\right)\|f\|_{\overline{L}^{2}(M,g)}.
  \end{equation}
\end{lemma}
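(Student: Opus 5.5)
The statement to prove is the Moser iteration bound, Lemma~\ref{lem:Moser_it}. I would follow the standard Moser iteration, using the given Sobolev-type inequality in place of the usual Euclidean Sobolev inequality.

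\textbf{Step 1: the basic energy inequality.} Test the differential inequality $\Delta f\geq-\lambda f$ (sign convention $\Delta=\operatorname{tr}\Hess$) against $f^{2p-1}$ for $p\geq1$. Since $f$ is only smooth on $\{f>0\}$, I would work with $f_\delta=\max(f,\delta)$ or $(f-\delta)_+$ and let $\delta\to0$. Integrating by parts over the closed manifold gives
\[
(2p-1)\int f^{2p-2}|df|^2\leq\lambda\int f^{2p}.
\]
Setting $w=f^{p}$, so that $|dw|^2=p^2f^{2p-2}|df|^2$, this becomes
\[
\|dw\|_{\overline L^2}^2\leq \frac{\lambda p^2}{2p-1}\|w\|_{\overline L^2}^2\leq \lambda p\,\|w\|_{\overline L^2}^2 .
\]

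\textbf{Step 2: one iteration step.} Apply the hypothesis to $w$:
\[
\|f\|_{\overline L^{2p\nu}}^{p}=\|w\|_{\overline L^{2\nu}}\leq (1+S\sqrt{\lambda p})\|f\|_{\overline L^{2p}}^{p}.
\]
Hence
\[
\|f\|_{\overline L^{2p\nu}}\leq (1+S\sqrt{\lambda p})^{1/p}\|f\|_{\overline L^{2p}}.
\]

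\textbf{Step 3: iterate and sum.} Take $p_k=\nu^k$ for $k\geq0$. Since $\|f\|_{\overline L^{q}}\to\|f\|_{L^\infty}$ as $q\to\infty$ (averaged norms on a finite-measure space), iterating Step 2 gives
\[
\|f\|_{L^\infty}\leq \exp\Big(\sum_k \nu^{-k}\log(1+S\sqrt{\lambda}\,\nu^{k/2})\Big)\|f\|_{\overline L^2}.
\]
To estimate the series, use $\log(1+x)\leq x$ and $\log(1+ab)\leq \log(1+a)+\log(1+b)$. This bounds it by
\[
\sum_k\nu^{-k}S\sqrt\lambda\,\nu^{k/2}=\frac{S\sqrt{\lambda}}{1-\nu^{-1/2}}=\frac{S\sqrt{\lambda\nu}}{\sqrt\nu-1},
\]
which is exactly the claimed form with $C(S)=S$.

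\textbf{Main obstacle.} The only delicate point is justifying the integration by parts when $f$ is merely continuous and smooth on $\{f>0\}$. Testing against $\varphi=(f-\delta)_+^{2p-1}$, which is supported in $\{f>\delta\}$, avoids the boundary of the positivity set. One then needs to check that the inequality $\Delta f\geq-\lambda f$ there yields the same estimate with $f$ replaced by $(f-\delta)_+$ up to terms $\delta\int(\cdot)$, which vanish as $\delta\to0$. If this truncation is awkward, I would instead work with $f_\epsilon=\sqrt{f^2+\epsilon^2}$: on $\{f>0\}$ it satisfies $\Delta f_\epsilon\geq -\lambda f_\epsilon$ (a Kato-type computation), which makes the argument essentially routine. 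Everything else is bookkeeping of exponents.
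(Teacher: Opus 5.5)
Your argument is correct and follows the standard Moser iteration; the paper gives no proof of its own and simply cites \cite[Thm 9.2.7]{petersen2016riemannian}. The steps are: test against $f^{2p-1}$, feed $w=f^p$ into the Sobolev inequality, iterate with $p=\nu^k$, and sum $\sum_k \nu^{-k/2}S\sqrt{\lambda}$ to get $C(S)=S$.

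Of your two ways to handle the zero set, the truncation $(f-\delta)_+$ is the one that works, once you extend the Sobolev inequality to Lipschitz functions by density. The alternative $\sqrt{f^2+\epsilon^2}$ is only as smooth as $f^2$ across $\partial\{f>0\}$, so it does not by itself remove the difficulty.
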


\section{Uniform Enlargeability}\label{sec:uniform_enlargeability}
Let $\mathcal F$ be the family of Riemannian metrics on $\mathbb T^n$ from Theorem \ref{thm:main_result-torus_stability}. In this section we will show that metrics in the family $\mathcal{F}$ are uniformly enlargeable in the following
sense. Let
\[
  \pi^k:\T^n=\R^n/(k\Z)^n\longrightarrow \T^n=\R^n/\Z^n
\]
be the canonical $k^n$-fold cover, and set $g^k=(\pi^k)^*g$. We shall show that 
there is a fixed constant $C_{\mathcal{F}}$ so that for any $g\in \mathcal{F}$ if $(\T^{n},g^{k})$ is a $k^n-$fold covering
of $(\T^{n},g)$, then there is a smooth map
\[
  \phi^{k}:(\T^{n},g^{k})\rightarrow (\Sp^{n},g_{\Sp^{n}})
\]
which has nonzero degree and satisfies
\begin{equation}
  \|d \phi^{k}\|_{L^{\infty }}\leq\frac{C_{\mathcal{F}}}{k}.
\end{equation}
Let $g_{0}$ denote the product metric on $\T^{n}$, which does have the above mentioned property.
The claim will eventually follow from showing that we can find nonzero degree maps
$(\T^{n},g)$ to $(\T^{n},g_{0})$ whose Lipschitz norms are bounded independently of
$g\in \mathcal{F}$.
We begin by showing that we can find a basis of $H^{1}(\T^{n};\Z)_{\R}$ whose
comass norms are bounded in terms of $\mathrm{stabsys}_{1}(g)$.
\begin{lemma}\label{lem:bounded_cohomology_basis}
  Let $\sigma>0$ and $g$ be a Riemannian metric on $\T^n$ such that
  \[
    \mathrm{stabsys}_1(g)\geq\sigma.
  \]
  Then there is a constant $C(n,\sigma)$ and a basis $\alpha_i$ of $H^1(\T^n;\Z)_{\R}$ with representatives 
  $a_i$ such that
    \begin{equation}
        \|a_i\|_{L^{\infty}}\leq C(n,\sigma).
    \end{equation}
\end{lemma}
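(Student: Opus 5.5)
We will work entirely with lattices in $H_1(\T^n;\R)\cong\R^n$ and its dual $H^1(\T^n;\R)$, using the Kronecker pairing. Set $\Lambda=H_1(\T^n;\Z)_{\R}$ with the stable mass norm $\|\cdot\|_{\mathrm M}$. As recalled in Section~\ref{sec:background}, the polar lattice is $\Lambda^*=H^1(\T^n;\Z)_{\R}$. The dual norm of $\|\cdot\|_{\mathrm M}$ is the comass norm $\|\cdot\|^*_{\mathrm M}$; for $p=1$ this is the infimum of $L^\infty$ norms of closed representatives. That the comass norm is exactly dual to the stable mass norm is the classical duality of Federer (see \cite[\S 4.C]{Gromov-metric_structures}), which I will take as known. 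Let $\lambda_1,\dots,\lambda_n$ and $\lambda_1^*,\dots,\lambda_n^*$ be the successive minima of $\Lambda$ and $\Lambda^*$ with respect to these norms.

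The hypothesis says exactly $\lambda_1=\mathrm{stabsys}_1(g)\geq\sigma$. By Theorem~\ref{thm:bounds_dual_minima} with $j=1$ we get $\lambda_1\lambda_n^*\leq n!$, hence
\[
\lambda_j^*\leq\lambda_n^*\leq \frac{n!}{\sigma}\qquad\text{for all } j.
\]
Corollary~\ref{cor:existence_of_bounded_basis}, applied to $\Lambda^*$ with the comass norm, then produces a basis $\alpha_1,\dots,\alpha_n$ of $H^1(\T^n;\Z)_{\R}$ with
\[
\|\alpha_j\|^*_{\mathrm M}\leq j\lambda_j^*\leq \frac{n\cdot n!}{\sigma}.
\]

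Finally, by the definition of $\|\cdot\|^*_{\mathrm M}$ as an infimum, for each $j$ there is a closed smooth $1$-form $a_j$ with $[a_j]=\alpha_j$ and $|a_j|^*\leq 2\|\alpha_j\|^*_{\mathrm M}$. Since comass and pointwise norm agree for $1$-forms, this gives $\|a_j\|_{L^\infty}\leq 2n\cdot n!/\sigma=:C(n,\sigma)$. Alternatively, because the norm is finite-dimensional and the infimum is over an affine space, one can take a near-minimizer with any factor $1+\epsilon$.

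The main obstacle is justifying the exact duality between the stable mass norm on $H_1(\T^n;\R)$ and the comass norm on $H^1(\T^n;\R)$, which is needed to apply Theorem~\ref{thm:bounds_dual_minima} with the polar distance function. For the estimate we only need one inequality: $\|a\|^*_{\mathrm M}\leq \sup\{\langle\sigma,a\rangle:\|\sigma\|_{\mathrm M}\le1\}$. The reverse inequality $\langle c,\omega\rangle\le \mathrm{vol}_1(c)\,|\omega|^*$ is immediate by integrating along the chain. The needed direction is a Hahn--Banach argument on the space of normal $1$-currents, i.e.\ Federer's theorem. If I did not want to invoke it, I would instead argue via the norm $N(a):=\sup_{\|\sigma\|_{\mathrm M}\le1}\langle\sigma,a\rangle\le\|a\|^*_{\mathrm M}$. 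In that case I would need a direct construction of a bounded representative. One route is to take a Lipschitz map to $\Sp^1$ realizing a distance-like function built from $\alpha$ on the universal cover, with Lipschitz constant controlled by $N(\alpha)$ up to a constant.
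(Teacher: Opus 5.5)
Your proposal is correct and follows the paper's argument: the dual-minima bound $\lambda_1\lambda_n^*\le n!$ with $\lambda_1=\mathrm{stabsys}_1(g)\ge\sigma$, then the bounded-basis corollary applied to $H^1(\T^n;\Z)_{\R}$ with the comass norm. The paper also takes the mass/comass duality as given, and your explicit choice of near-optimal closed representatives (together with identifying the Hahn--Banach direction $\|a\|^*_{\mathrm M}\le\sup_{\|\sigma\|_{\mathrm M}\le1}\langle\sigma,a\rangle$ as the one actually needed) fills in steps the paper leaves implicit.
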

\begin{proof}
    For the torus, the dual lattice of $\Lambda=H_1(\T^n;\Z)_{\R}$ with the
    mass norm $\|\cdot\|_{\mathrm{M}}$ is $\Lambda^{*}=H^1(\T^n;\Z)_{\R}$ with the comass norm
    $\|\cdot\|^{*}_{\mathrm{M}}$.
    Let $\{\lambda_{i}\}_{i=1}^{n}$ and $\{\mu_{i}\}_{i=1}^{n}$ be the successive minima of
    $H^{1}(\T^{n};\Z)_{\R}$ with respect to $\|\cdot\|^{*}_{\mathrm{M}}$ and
    $H_{1}(\T^{n};\Z)_{\R}$ with respect to $\|\cdot\|_{\mathrm{M}}$.
    By Theorem \ref{thm:bounds_dual_minima} we know that
    \begin{equation}
      \lambda_{n}\leq\frac{n!}{\mu_{1}}\leq\frac{n!}{\sigma}.
    \end{equation}
    Since by definition $\lambda_{j}\leq\lambda_{n}$ for all $j=1,\dots,n$, we have the same upper bound
    for all $\lambda_{j}$.
    Now, we may apply Corollary \ref{cor:existence_of_bounded_basis} to get the result.
\end{proof}
\begin{lemma}[Uniform Enlargeability]\label{lem:uniform_enlargeability}
  There exists a constant $C_{\mathcal{F}}(n,\sigma,D,K)=C_{\mathcal{F}}>0$ such that for all $g\in \mathcal{F}(n,\sigma,D,K)$ and all $k\in \mathbb{N}$ there 
  is a $k^{n}$-fold covering $(\mathbb{T}^{n},g^{k})$ and a map
  $\phi^{k}:(\mathbb{T}^{n},g^{k})\rightarrow (\mathbb{S}^{n},g_{\Sp^{n}})$
  such that
  \begin{equation}
    \|d\phi^{k}\|_{\infty }\leq\frac{C_{\mathcal{F}}}{k}.
  \end{equation}
\end{lemma}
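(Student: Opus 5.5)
The plan is to build a Lipschitz map $\Psi:(\T^n,g)\to(\T^n,g_0)$ of nonzero degree whose Lipschitz constant depends only on $n$ and $\sigma$. We then lift it to the covers and compose with a fixed collapse map onto the sphere.

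\emph{Step 1: the harmonic basis.} By Lemma~\ref{lem:bounded_cohomology_basis}, choose a basis $\alpha_1,\dots,\alpha_n$ of $H^1(\T^n;\Z)_{\R}$ with closed representatives $a_i$ satisfying $\|a_i\|_{L^\infty}\leq C(n,\sigma)$. We need smooth representatives. We could mollify, or take the harmonic representatives $h_i$ and bound them. For the latter, the Bochner formula gives $\Delta|h_i|\geq -K|h_i|$ on $\{|h_i|>0\}$. Theorem~\ref{thm:Poincare_lower_bound-Ricci_lower_bound} and Proposition~\ref{prop:global_Poincare_from_Ricci_diameter_and_volume} supply the Sobolev inequality needed in Lemma~\ref{lem:Moser_it}, with constants depending on $n,KD^2$. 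Moser iteration then yields
\[
\|h_i\|_{L^\infty}\leq C_{\mathcal F}\|h_i\|_{\overline L^2}.
\]
Since $h_i$ minimizes the $L^2$ norm in its class, $\|h_i\|_{\overline L^2}\leq\|a_i\|_{L^\infty}\leq C(n,\sigma)$. Either route gives smooth closed $1$-forms $h_i$ with $\|h_i\|_{L^\infty}\leq C_{\mathcal F}$ representing an integral basis. This is why $D$ and $K$ enter $C_{\mathcal F}$.

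\emph{Step 2: the map to the flat torus.} Because the $[h_i]$ are integral, fixing a basepoint $x_0$ and setting $u_i(x)=\int_{x_0}^x h_i \bmod \Z$ gives well-defined maps $u_i:\T^n\to\R/\Z$ with $du_i=h_i$. Put
\[
\Psi=(u_1,\dots,u_n):\T^n\to\R^n/\Z^n .
\]
Then $\|d\Psi\|_{L^\infty}\leq \sqrt n\, C_{\mathcal F}$. The induced map $\Psi^*$ on $H^1$ sends the standard basis $d\theta_i$ to $\alpha_i$, which is a basis of the integer lattice. Hence $\Psi^*$ is an isomorphism on $H^1(\,\cdot\,;\Z)$. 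Since the cohomology ring of the torus is exterior on $H^1$, $\Psi^*$ also sends the top class to $\pm$ the top class, so $\deg\Psi=\pm1$.

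\emph{Step 3: lift, rescale, collapse.} Identify $\T^n=\R^n/\Z^n$ so that $\pi^k$ is the quotient $\R^n/(k\Z)^n\to\R^n/\Z^n$; this identification is where the choice of covering enters. Because $\Psi_*$ is an isomorphism on $\pi_1=\Z^n$, it carries the subgroup $(k\Z)^n$ onto $(k\Z)^n$. So $\Psi\circ\pi^k$ lifts to a map
\[
\Psi^k:(\T^n,g^k)\to\R^n/(k\Z)^n,\qquad \text{Lipschitz constant}\leq\sqrt n\, C_{\mathcal F},\quad \deg\Psi^k=\pm1.
\]
Compose with the scaling $x\mapsto x/k$ from $\R^n/(k\Z)^n$ onto $(\R^n/\Z^n,g_0)$, which has Lipschitz constant $1/k$. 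Finally compose with a fixed smooth degree-one map $c:(\T^n,g_0)\to\Sp^n$ collapsing the complement of a small ball, whose Lipschitz constant $C(n)$ is universal. The composite
\[
\phi^k=c\circ(\tfrac1k\cdot)\circ\Psi^k
\]
has nonzero degree and satisfies $\|d\phi^k\|_\infty\leq C(n)\sqrt n\,C_{\mathcal F}/k$.

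The main obstacle is Step 1: obtaining smooth representatives with a uniform sup bound, and in particular checking that the Moser constants depend only on $(n,\sigma,D,K)$. The lifting in Step 3 is only bookkeeping, provided the covering is identified compatibly with $\Psi$.
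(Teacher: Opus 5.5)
Your proof is correct and follows the paper's argument. Both take the bounded integral basis from Lemma~\ref{lem:bounded_cohomology_basis}, form the degree-one map $(u_1,\dots,u_n)$ to $(\T^n,g_0)$, lift to the $k^n$-fold covers, and compose with a $1/k$-contracting degree-one map to $\Sp^n$. The only difference is your detour through harmonic representatives in Step 1, which is harmless but unnecessary: the comass norm is an infimum over smooth closed forms, so the paper takes the $a_i$ smooth directly, with a bound depending only on $(n,\sigma)$.
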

\begin{proof}
  Let $(\mathrm{T}^{n},g_{0})$ be the torus with the standard product metric. 
  By Lemma \ref{lem:bounded_cohomology_basis}
  we may find a basis of integral 1-forms, say $\{a_{i}\}_{1}^{n}$, such that
  $\|a_{i}\|_{L^{\infty}(g)}\leq C\left(n,\sigma\right)$.
  Additionally, for each $i$ there is a map $u_{i}:T^{n}\rightarrow \mathbb{S}^{1}$ such that
  $du_{i}=a_{i}$. Let $\U\colon \mathbb T^n \to \mathbb T^n$ be the map $\U = (u_1, \cdots, u_n)$. 
  Since $\{du_{i}\}_{i=1}^{n}$ generates $H^{1}(T^{n};\mathbb{Z})_{\R}$, we see that
  $\mathrm{deg}(\U)=\pm1$.
  We may as well suppose that $\mathrm{deg}(\U)=1$.

  Following the proof in \cite{lawson2016spin}, but somewhat simplified here since we are only dealing with
  the torus, let $\pi^{k}:T^{n}\rightarrow T^{n}$ be the $k^{n}$-fold covering map.
  Observe that the map $\U\circ\pi^{k}:(\mathbb{T}^{n},g^{k})\rightarrow (\mathbb{T}^{n},g_{0})$
  lifts to a map $\U^{k}:(\mathbb{T}^{n},g^k)\rightarrow (\mathbb{T}^{n},g_{0}^{k})$, and that we have
  \begin{equation}
    \|d\mathbb{U}^{k}\|_{\infty }\leq C\left(n,\sigma\right).
  \end{equation}
  Letting $f:(T^{n},g_{0}^{k})\rightarrow (\mathbb{S}^{n},g_{\Sp^{n}})$ be a $\frac{1}{k}$-contracting
  map with degree 1, the result then follows by setting $\phi^{k}=f\circ\mathbb{U}^{k}$ and using the chain rule.
\end{proof}

\section{Scalar curvature bounds on harmonic 1-forms}\label{sec:scal_bounds_harmonic_1-forms}
We now analyze covering spaces to obtain geometric estimates in terms of the negative part of the scalar curvature.
Here, as above, we let
\[
  \pi^k:\T^n=\R^n/(k\Z)^n\longrightarrow \T^n=\R^n/\Z^n
\]
be the canonical $k^n$-fold cover, and set $g^k=(\pi^k)^*g$. By
Lemma \ref{lem:uniform_enlargeability} there is a constant $C_{\mathcal{F}}$ such that for
each $g\in \mathcal{F}(n,\sigma,D,K)=\mathcal{F}$ and $k\geq1$ there is a smooth map
  \[
    \varphi^{k}:(\T^n,g^{k})\longrightarrow (\Sp^n,g_{\Sp^{n}})
  \]
of nonzero degree such that
\[
  \label{eq:contracting-map}
  \mathrm{Lip}(\varphi^{k})\leq \frac{C_{\mathcal{F}}}{k}.
\]
The odd dimensional case (i.e. when $n$ is odd) of Theorem \ref{thm:main_result-torus_stability}  follows from
the even dimensional case by considering the direct product $\mathbb T^n\times \mathbb S^1$.
So, from now on we shall assume without loss of generality that $n$ is even. 

Choose an Hermitian vector bundle $E_0\to\Sp^n$ such that
\[
  \left\langle \operatorname{ch}_{n/2}(E_0),[\Sp^n]\right\rangle\neq 0.
\]
Set $E^{k}:=(\varphi^{k})^*E_0$, and let $\slashed D_{E^k}$ be the twisted Dirac operator on
$S_{\T^n}\otimes E^k$, where $S_{\T^n}$ is the spinor bundle of $(\T^n,g^k)$.
The Atiyah--Singer index theorem gives
\begin{align*}
  \operatorname{ind}(\slashed D_{E^k}^{+})
  &=\left\langle \widehat A(T\T^n)\operatorname{ch}(E^k),[\T^n]\right\rangle \\
  &=\deg(\varphi^k)
    \left\langle \operatorname{ch}_{n/2}(E_0),[\Sp^n]\right\rangle
  \neq0.
\end{align*}
In particular,
\begin{equation}
  \label{eq:nonzero-twisted-kernel}
  \ker\slashed D_{E^k}\neq\{0\}
  \qquad\text{for every }k\geq1.
\end{equation}
Furthermore, the Lichnerowicz formula states 
\[
  \slashed D_{E^k}^{\,2}
  =\nabla^*\nabla+\frac14\Scal_{g^k}+\mathfrak R^{E^k},
\]
where $\mathfrak R^{E^k}$ is the curvature endomorphism induced by $E^k$. Since curvature
is a two-form and $E^k$ is pulled back by $\varphi^k$, Equation~\eqref{eq:contracting-map}
implies the pointwise estimate
\begin{equation}
  \label{eq:curvature-term-bound}
  \|\mathfrak R^{E^k}\|
  \leq \frac{C_{\mathcal F}}{k^2}.
\end{equation}
This establishes the following result:
\begin{lemma}\label{lem:global-bochner}
For any $g\in \mathcal{F}$ the space $\ker\slashed D_{E^{k}}$ is nontrivial,
and for every element $s^{k}\in\ker\slashed D_{E^{k}}$ we have
\[
  \int_{\T^n}|\nabla s^{k}|^2dV_{g^{k}}
  \leq
  \int_{\T^n}\left( \frac14 \Scalminus_{g^{k}}+C_{\mathcal{F}}k^{-2} \right)|s_{k}|^2 dV_{ g^{k}}.
\]
\end{lemma}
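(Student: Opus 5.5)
The plan is to combine the index computation with the Lichnerowicz formula, integrated against a harmonic spinor. Nontriviality of $\ker\slashed D_{E^k}$ is already recorded in \eqref{eq:nonzero-twisted-kernel}. The Atiyah--Singer index of $\slashed D^{+}_{E^k}$ equals $\deg(\varphi^k)\langle\operatorname{ch}_{n/2}(E_0),[\Sp^n]\rangle$. This uses that $\widehat A(T\T^n)=1$, because $T\T^n$ is trivial, and that $\operatorname{ch}(E^k)=(\varphi^k)^*\operatorname{ch}(E_0)$ has top-degree part $(\varphi^k)^*\operatorname{ch}_{n/2}(E_0)$. Lemma~\ref{lem:uniform_enlargeability} gives $\deg\varphi^k\neq0$: the map $f\circ\U^k$ there is a composition of degree-one maps, since $\U^k$ has degree one as a lift of a degree-one map between $k^n$-fold covers. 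A nonzero index forces a nonzero kernel, and this holds for every $g\in\mathcal F$ and every $k$.

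For the inequality, take $s^k\in\ker\slashed D_{E^k}$, which is smooth by elliptic regularity. Pair the Lichnerowicz formula with $s^k$ and integrate over the closed manifold $\T^n$. Since $\int\langle\nabla^*\nabla s^k,s^k\rangle=\int|\nabla s^k|^2$ and $\slashed D_{E^k}s^k=0$, this gives
\[
  \int_{\T^n}|\nabla s^k|^2dV_{g^k}=-\int_{\T^n}\tfrac14\Scal_{g^k}|s^k|^2dV_{g^k}-\int_{\T^n}\langle\mathfrak R^{E^k}s^k,s^k\rangle dV_{g^k}.
\]
Next use $-\Scal_{g^k}\le\Scalminus_{g^k}$ pointwise, and bound the last term by $\|\mathfrak R^{E^k}\|\,|s^k|^2$.

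The remaining step, which needs the most care, is the pointwise estimate \eqref{eq:curvature-term-bound}. Fix a unitary connection on $E_0$ with curvature $\Omega_0$, pull it back, and get curvature $(\varphi^k)^*\Omega_0$. Then
\[
  |(\varphi^k)^*\Omega_0(X,Y)|\le\|\Omega_0\|_{\infty}\,|d\varphi^k|^2|X||Y|\le\|\Omega_0\|_\infty C_{\mathcal F}^2k^{-2}|X||Y|.
\]
Because $\mathfrak R^{E^k}=\tfrac12\sum_{i,j}e_ie_j\otimes(\varphi^k)^*\Omega_0(e_i,e_j)$ and Clifford multiplication by unit vectors is isometric, its operator norm is at most $C(n)$ times that bound. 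The constant depends only on $n$, $E_0$, and $C_{\mathcal F}$; since $E_0$ is a fixed choice, this is absorbed into a new $C_{\mathcal F}$.

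Finally, $\Scalminus_{g^k}=(\pi^k)^*\Scalminus_g$, as the covering is a local isometry, so the notation in the statement is consistent. Combining the three estimates yields the claimed inequality.
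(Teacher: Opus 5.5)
Your proposal is correct and follows the same argument as the paper. Nontriviality of $\ker\slashed D_{E^k}$ comes from the Atiyah--Singer index computation, using $\widehat A(T\T^n)=1$ and $\deg\varphi^k\neq0$. The inequality comes from the integrated Lichnerowicz formula together with the pointwise bound $\|\mathfrak R^{E^k}\|\le C_{\mathcal F}k^{-2}$, obtained by pulling back the curvature of $E_0$ along the $C_{\mathcal F}/k$-Lipschitz map $\varphi^k$. Your write-up also supplies details the paper leaves implicit, namely the degree computation for $f\circ\U^k$ and the explicit operator-norm estimate for the curvature endomorphism.
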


We pick a normalization that compliments the averaging operation in Definition \ref{def:avg_operation} given below:
for any nonzero element $s^{k}\in \ker\slashed D_{E^{k}}$ we may rescale it, and abuse notation slightly, so that
\begin{equation}\label{eq:spinor-normalization}
  \int_{\T^n}|s^{k}|^2 dV_{g^{k}}
  =\mathrm{Vol}_{g^{k}}(\T^n)=k^{n}\mathrm{Vol}_{g}(\T^{n}).
\end{equation}
\begin{remark}
  From now on, when working with an element $s^{k}\in \ker\slashed D_{{E^{k}}}$ we will always assume that it is 
  nontrivial and normalized as above.
\end{remark}
Since $s^{k}$ is an harmonic spinor, one is tempted to apply the Poincar{\'e} inequality in
Theorem \ref{thm:Poincare_lower_bound-Ricci_lower_bound},
the Sobolev inequality which follows (Proposition \ref{prop:global_Poincare_from_Ricci_diameter_and_volume}), and Moser iteration
(Lemma \ref{lem:Moser_it})
to show that $s^{k}$ is norm bounded in terms of its $L^{2}$-norm.
The problem with doing this directly is that the constant in Theorem \ref{thm:Poincare_lower_bound-Ricci_lower_bound} grows
exponentially in terms of the diameter, and $\mathrm{diam}_{g^{k}}\left(\T^{n}\right)$ becomes arbitrarily large
as $k$ increases.

Instead, we will average over the deck transformations of $\left(\T^{n},g^{k}\right)$.
This will produce objects on $(\T^{n},g)$, which is an element of $\mathcal{F}$
and so has a known upper bound on its diameter and lower bound on its Ricci curvature.
\begin{definition}\label{def:avg_operation}
Let $\pi^{k}:\T^{n}\rightarrow \T^{n}$ be the $k^{n}$-fold covering map.
For a function $f$ on $\T^n$ define
\[
  \mathcal A_k f(x):=\frac1{k^{n}}\sum_{y\in(\pi^k)^{-1}(x)}f(y),
  \qquad x\in\T^n.
\]
where $|(\pi^k)^{-1}(x)| = k^n.$
For vector or tensor fields on the cover, we use the same notation after identifying
tangent spaces by the local isometry $d\pi^k$.
\end{definition}
The following scalar function is a crucial ingredient.
\begin{definition}\label{def:rho-definition}
  Let $g\in\mathcal{F}$ and $\pi^{k}\colon \left(\T^{n},g^{k}\right)\to\left(\T^{n},g\right)$ the covering map.
  For any nontrivial and normalized element $s^{k}\in\ker\slashed D_{E^{k}}$ define $\rho_{k}$ on
  $\T^{n}$ by
\begin{equation}\label{eq:rho-definition}
  \rho_{k}:=\mathcal A_k(|s^{k}|^2).
\end{equation}
\end{definition}
\noindent
Because of the normalization chosen for $s^{k}$, we have
\begin{equation}\label{eq:rho-mean-one}
  \frac1{\mathrm{Vol}_{g}(\T^n)}\int_{\T^n}\rho_{k} dV_{g}=1.
\end{equation}
\begin{lemma}\label{lem:nabla-rho_ptwise_estimate}
  The function $\rho_{k}$ defined above satisfies the following pointwise bound on its differential:
  \begin{equation}\label{eq:rho-gradient-pointwise}
    \lvert d\rho_{k} \rvert^{2}\leq4\rho_{k}\A\left(\lvert \nabla s^{k} \rvert^{2}\right).
  \end{equation}
\end{lemma}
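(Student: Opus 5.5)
The plan is a pointwise Kato inequality on the cover, followed by a Cauchy--Schwarz inequality applied to the finite sum that defines $\A$. We work near a fixed point $x\in\T^n$. Since $\pi^{k}$ is a local isometry, over a small neighborhood $U$ of $x$ the preimage $(\pi^k)^{-1}(U)$ splits into $k^n$ disjoint sheets. On these sheets $\pi^k$ has local inverses $\iota_1,\dots,\iota_{k^n}$, each an isometry onto its image. With these inverses,
\[
\rho_k=\frac{1}{k^n}\sum_{j=1}^{k^n}|s^k|^2\circ\iota_j \quad\text{on } U .
\]
Differentiation commutes with the finite sum. Since each $\iota_j$ is an isometry, we get $d\rho_k=\frac{1}{k^n}\sum_j \iota_j^*\,d(|s^k|^2)$, and the norms are computed with $g$ on $U$ and with $g^k$ on the sheets. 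This is exactly the identification of tangent spaces via $d\pi^k$ in Definition~\ref{def:avg_operation}.

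On the cover we use that $\nabla$ is metric-compatible with the Hermitian structure on $S_{\T^n}\otimes E^k$. This gives $d|s^k|^2=2\,\mathrm{Re}\langle\nabla s^k,s^k\rangle$, and hence
\[
|d|s^k|^2|\le 2|s^k|\,|\nabla s^k|
\]
pointwise. This holds everywhere, including where $s^k$ vanishes, so no smoothness issue for $|s^k|$ arises, because we differentiate $|s^k|^2$.

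Combining the two steps, write $a_j=|s^k|(\iota_j(\cdot))$ and $b_j=|\nabla s^k|(\iota_j(\cdot))$. Then
\[
|d\rho_k|\le \frac{2}{k^n}\sum_j a_jb_j\le 2\Big(\frac{1}{k^n}\sum_j a_j^2\Big)^{1/2}\Big(\frac{1}{k^n}\sum_j b_j^2\Big)^{1/2}
=2\rho_k^{1/2}\,\A(|\nabla s^k|^2)^{1/2}.
\]
The second inequality is Cauchy--Schwarz applied with the uniform weights $k^{-n}$. Squaring yields \eqref{eq:rho-gradient-pointwise}.

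The only point needing care is the first step: $\A$ commutes with $d$, and norms of pulled-back covectors are preserved. Both facts follow because the deck-sheet decomposition is by isometries. Beyond this the argument is routine.
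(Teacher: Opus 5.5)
Your proof is correct and is essentially the paper's argument: differentiate $|s^k|^2$ sheet by sheet to get the bound $2|s^k|\,|\nabla s^k|$, then apply Cauchy--Schwarz to the uniformly weighted fiber sum defining $\A_k$. The only cosmetic difference is that the paper fixes a direction $X$, bounds $|d\rho_k(X)|$ by $2\sqrt{\rho_k\,\A_k(|\nabla_X s^k|^2)}$, and then passes to the full norm, whereas you bound the covector norm directly using the triangle inequality and the isometric local inverses.
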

\begin{proof}
  For any $X\in T_x\T^n$ (and its lift) we have
  \begin{align*}
    |d\rho_{k}(X)|
    &\leq \frac{2}{k^{n}}\sum_{i=1}^{k^{n}}
       |\nabla_{X}s^{k}|\,|s^{k}| \\
    &\leq
       2\left(\frac1{k^{n}}\sum_{i=1}^{k^{n}}|s^{k}|^2\right)^{1/2}
        \left(\frac1{k^{n}}\sum_{i=1}^{k^{n}}
        |\nabla_{X}s^{k}|^2\right)^{1/2} \\
    &=2
    \sqrt{\rho_{k}(x) \A_k(|\nabla_X s_{k}|^2)(x)},
  \end{align*}
  where $|\nabla_X s^{k}|$ denotes the covariant derivative of $s^{k}$ in the
  lifted direction. In particular, we have 
  \begin{equation}
    |d\rho_{k}|^2
    \leq 4\rho_{k}\,\A_k(|\nabla s^{k}|^2).
  \end{equation}
\end{proof}
With the above in hand, we can see that $\sqrt{\rho_{k}}$ is a subsolution.
\begin{lemma}\label{lem:rho-subsolution}
There exists a constant $C(n,\sigma,D,K)=C_{\mathcal{F}}>0$ such that for any $g\in \mathcal{F}$
the function $\rho_{k}$ defined above satisfies
\[
  \Delta_{g}\sqrt{\rho_{k}}\geq -C_{\mathcal{F}}\sqrt{\rho_{k}}.
\]
\end{lemma}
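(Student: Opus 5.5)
The plan is a Bochner computation on the cover $(\T^n,g^k)$ for $|s^k|^2$, pushed down by the averaging operator $\A_k$, followed by the chain rule for the square root. Lemma~\ref{lem:nabla-rho_ptwise_estimate} supplies exactly the gradient bound needed to absorb the bad term. I use the convention $\Delta_g=\operatorname{tr}\Hess$, which matches the hypothesis $\Delta_g f\geq-\lambda f$ in Lemma~\ref{lem:Moser_it}.

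\emph{Step 1: Bochner identity on the cover.} For $s=s^k\in\ker\slashed D_{E^k}$ we have
\[
\Delta|s|^2=2|\nabla s|^2-2\,\mathrm{Re}\langle\nabla^*\nabla s,s\rangle .
\]
Since $\slashed D_{E^k}s=0$, the Lichnerowicz formula gives $\nabla^*\nabla s=-\tfrac14\Scal_{g^k}s-\mathfrak R^{E^k}s$. Hence
\[
\Delta|s|^2=2|\nabla s|^2+\tfrac12\Scal_{g^k}|s|^2+2\,\mathrm{Re}\langle\mathfrak R^{E^k}s,s\rangle .
\]
The point of this step is to get a pointwise bound, not an integral one. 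The hypothesis $\Ric_g\geq-K$ gives $\Scal_g\geq-nK$, and this lifts to the cover. Combining this with \eqref{eq:curvature-term-bound}, and using $k\geq1$, we get
\[
\Delta|s|^2\geq 2|\nabla s|^2-C_{\mathcal F}|s|^2 .
\]

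\emph{Step 2: averaging.} The map $\pi^k$ is a local isometry, so $\Delta_g$ commutes with $\A_k$: for any function $f$ on the cover, $\Delta_g\A_k f=\A_k\Delta_{g^k}f$. Applying $\A_k$ to the inequality of Step 1 therefore gives
\[
\Delta_g\rho_k\geq 2\A_k(|\nabla s^k|^2)-C_{\mathcal F}\rho_k .
\]

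\emph{Step 3: square root.} On the set $\{\rho_k>0\}$ the chain rule gives
\[
\Delta\sqrt{\rho_k}=\frac{\Delta\rho_k}{2\sqrt{\rho_k}}-\frac{|d\rho_k|^2}{4\rho_k^{3/2}} .
\]
By Lemma~\ref{lem:nabla-rho_ptwise_estimate}, the second term is at least $-\A_k(|\nabla s^k|^2)/\sqrt{\rho_k}$. The first term is at least $\A_k(|\nabla s^k|^2)/\sqrt{\rho_k}-\tfrac{C_{\mathcal F}}2\sqrt{\rho_k}$ by Step 2. The gradient terms cancel exactly, leaving
\[
\Delta\sqrt{\rho_k}\geq-C_{\mathcal F}\sqrt{\rho_k}\quad\text{on }\{\rho_k>0\}.
\]

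\emph{Main obstacle: the zero set of $\rho_k$.} The only delicate point is where $\rho_k$ vanishes, since $\sqrt{\rho_k}$ need not be smooth there. I would first replace $\sqrt{\rho_k}$ by $\sqrt{\rho_k+\epsilon}$ and redo Step 3. The gradient term is then at least $-\rho_k\A_k(|\nabla s^k|^2)/(\rho_k+\epsilon)^{3/2}$, which is dominated by the positive term $\A_k(|\nabla s^k|^2)/\sqrt{\rho_k+\epsilon}$. The potential term is $-C\rho_k/\sqrt{\rho_k+\epsilon}\geq-C\sqrt{\rho_k+\epsilon}$. So the inequality holds everywhere for each $\epsilon>0$, and letting $\epsilon\to0$ gives it in the weak sense. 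This is also compatible with the hypotheses of Lemma~\ref{lem:Moser_it}: $\sqrt{\rho_k}$ is continuous and is smooth on $\{\sqrt{\rho_k}>0\}$.
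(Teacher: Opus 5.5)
Your proposal is correct and follows the paper's proof essentially step for step. The steps are the same: the Bochner--Lichnerowicz identity for $|s^k|^2$ on the cover with $\Scal\ge -nK$ and the bound on $\mathfrak R^{E^k}$, then averaging through the local isometry $\pi^k$, then the square-root chain rule, where Lemma~\ref{lem:nabla-rho_ptwise_estimate} cancels the gradient term exactly. Your $\sqrt{\rho_k+\epsilon}$ regularization for the zero set of $\rho_k$ is an extra bit of rigor that the paper leaves implicit.
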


\begin{proof}
For an harmonic spinor, we have 
\[
  \frac12\Delta_{g^{k}} |s^{k}|^2
  =|\nabla s^{k}|^2+
  \left\langle\left(\frac14\Scal_{g^{k}}+\mathfrak R^{E_{k}}\right)s^{k},s^{k}\right\rangle.
\]
Using \eqref{eq:curvature-term-bound}, this gives
\[
  \frac12\Delta_{g^{k}} |s^{k}|^2
  \geq
  \lvert \nabla s^{k} \rvert^{2}
  -\left(\frac14\Scalminus_{g^{k}}+C_{\mathcal{F}}k^{-2}\right)|s_{k}|^2.
\]
Averaging gives us
\begin{equation*}
  \frac{1}{2}\Delta_{g}\rho_{k}\geq\A(\lvert \nabla s^{k} \rvert^{2})-
  \left(\frac{1}{4}\Scalminus_{g}+C_{\mathcal{F}}k^{-2}\right)\rho_{k}.
\end{equation*}
Using that $\Ric_{g}\geq -K$, we have $\Scalminus_{g}\leq nK$, and so we are led to 
\begin{equation*}
  \frac{1}{2}\Delta_{g}\rho_{k}\geq\A\left(\lvert \nabla s^{k} \rvert^{2}\right)-
  \left(\frac{nK}{4}+C_{\mathcal{F}}k^{-2}\right)\rho_{k}.
\end{equation*}
We also have that
\begin{equation}
  \Delta_{g}\sqrt{\rho_{k}}
  =
  \frac{1}{2\sqrt{\rho_{k}}}\left(\Delta_{g}\rho_{k}-\frac{\lvert d\rho^{k} \rvert^{2}}{2\rho_{k}}\right).
\end{equation}
We can now apply Lemma~\ref{lem:nabla-rho_ptwise_estimate}, the inequality above, and cancel terms to obtain
\begin{equation}
  \Delta_{g}\sqrt{\rho_{k}}\geq\frac{-1}{\sqrt{\rho_{k}}}\left(\frac{nK}{4}+C_{\mathcal{F}}k^{-2}\right)\rho_{k},
\end{equation}
which after combining terms is precisely
\begin{equation}
  \Delta_{g}\sqrt{\rho_{k}}\geq-C_{\mathcal{F}}\sqrt{\rho_{k}}.
\end{equation}
\end{proof}
This shows that $\rho_{k}$ must be uniformly bounded in $L^{\infty}$:
\begin{corollary}\label{cor:rho-linfty}
There is a constant $C_{\mathcal{F}}(n,\sigma,D,K)=C_{\mathcal{F}}>0$ such that for any $g\in \mathcal{F}$,
any $k\in \N$, and any normalized $s^{k}\in \ker\slashed D_{E^{k}}$ we have
\[
  \|\rho_{k}\|_{L^\infty(g)}\leq C_{\mathcal{F}}.
\]
\end{corollary}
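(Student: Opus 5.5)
The plan is to apply the Moser iteration estimate, Lemma~\ref{lem:Moser_it}, to $f=\sqrt{\rho_{k}}$ on the base $(\T^{n},g)$, with $\lambda=C_{\mathcal{F}}$ supplied by Lemma~\ref{lem:rho-subsolution}. Working on the base rather than on the cover is essential: the constants in the Poincar\'e and Sobolev inequalities must depend only on $n$, $KD^{2}$ and the fixed diameter bound, and not on the diameter of $(\T^{n},g^{k})$, which grows with $k$.

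First, I verify the hypotheses of Lemma~\ref{lem:Moser_it} uniformly over $\mathcal{F}$.
\begin{enumerate}
\item Apply Theorem~\ref{thm:Poincare_lower_bound-Ricci_lower_bound} with $R=D$, so that $B(x,D)=\T^{n}$. This gives an $L^{\nu}$--$L^{1}$ Poincar\'e inequality with $\nu=\tfrac{n}{n-1}$ and constant $S=C(n,KD^{2})D$, which is exactly the hypothesis of Proposition~\ref{prop:global_Poincare_from_Ricci_diameter_and_volume} with $s=n$.
\item Take $p=2<n$; here $n\geq 3$ can be assumed, for instance after taking the product with $\Sp^{1}$ as the paper does, and $n$ is even. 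The proposition then yields
\[
\|u\|_{\overline{L}^{2\nu}}\leq S'\|du\|_{\overline{L}^{2}}+\|u\|_{\overline{L}^{2}},\qquad \nu=\tfrac{n}{n-2},
\]
with $S'=C(n,K,D)$.
\end{enumerate}

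Next, I check the regularity of $f$. It is continuous and nonnegative. On $\{\rho_{k}>0\}$ it is smooth, because $\rho_{k}$ is a finite sum of smooth functions $|s^{k}|^{2}$ composed with local inverses of the covering, and the square root is smooth away from $0$. The inequality $\Delta_{g}f\geq -C_{\mathcal{F}}f$ is exactly Lemma~\ref{lem:rho-subsolution} on that set.

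Finally, Lemma~\ref{lem:Moser_it} gives
\[
\|\sqrt{\rho_{k}}\|_{L^{\infty}}\leq \exp\!\Big(\tfrac{C(S')\sqrt{C_{\mathcal{F}}\nu}}{\sqrt{\nu}-1}\Big)\|\sqrt{\rho_{k}}\|_{\overline{L}^{2}(g)}.
\]
By the normalization \eqref{eq:rho-mean-one}, $\|\sqrt{\rho_{k}}\|_{\overline{L}^{2}(g)}^{2}=\dashint\rho_{k}=1$. Squaring therefore gives $\|\rho_{k}\|_{L^{\infty}}\leq C_{\mathcal{F}}$, and this bound is independent of $k$ and of the choice of $s^{k}$.

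The main obstacle I expect is the differential inequality near the zero set of $\rho_{k}$. Lemma~\ref{lem:rho-subsolution} holds only where $\rho_{k}>0$, and $\sqrt{\rho_{k}}$ may fail to be smooth on the zero set. Lemma~\ref{lem:Moser_it} is stated to allow exactly this case, but if a weak formulation across $\{\rho_{k}=0\}$ is needed, I would first run the argument with $f_{\epsilon}=\sqrt{\rho_{k}+\epsilon}$. This function is smooth everywhere, and the same computation using Lemma~\ref{lem:nabla-rho_ptwise_estimate} gives
\[
\Delta f_{\epsilon}\geq -C_{\mathcal{F}}\,\rho_{k}/f_{\epsilon}\geq -C_{\mathcal{F}}f_{\epsilon},
\]
where the gradient term still cancels because $|d\rho_{k}|^{2}\leq 4(\rho_{k}+\epsilon)\A_{k}(|\nabla s^{k}|^{2})$. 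I would then let $\epsilon\to 0$ in the resulting $L^{\infty}$ bound.
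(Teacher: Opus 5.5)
Your proposal is correct and matches the paper's proof: Moser iteration (Lemma~\ref{lem:Moser_it}) applied on the base $(\T^n,g)$ to $\sqrt{\rho_k}$, with $\lambda$ from Lemma~\ref{lem:rho-subsolution} and $\|\sqrt{\rho_k}\|_{\overline{L}^2(g)}=1$ from \eqref{eq:rho-mean-one}. Your check of the uniform Sobolev hypothesis and your $\sqrt{\rho_k+\epsilon}$ regularization are sound details the paper leaves implicit; the one small adjustment is that for $n=2$ it is cleaner to take any $s>2$ in Proposition~\ref{prop:global_Poincare_from_Ricci_diameter_and_volume} than to cross with $\Sp^1$, since crossing once would break the evenness of $n$.
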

\begin{proof}
  This follows directly from Lemma~\ref{lem:rho-subsolution}, Equation~\eqref{eq:rho-mean-one},
  and Moser iteration applied to $\sqrt{\rho_{k}}$. That is, since
  \begin{equation*}
    \|\sqrt{\rho_{k}}\|_{\overline{L}^{2}(g)}=
    \left(\frac{1}{\mathrm{Vol}_{g}\left(\T^{n}\right)}\int_{\T^{n}}\rho_{k}dV_{g}\right)^{\frac{1}{2}}=
    1,
  \end{equation*}
  Lemma~\ref{lem:Moser_it} now gives the desired result.
\end{proof}
We now wish to show that $d\rho_{k}$ has $L^{2}$-norm controlled by $\Scalminus_{g}$.
To do this, we will first see that the average spinor energy is bounded above by this quantity.
\begin{lemma}\label{lem:averaged-spinor-energy}
  There is a constant $C_{\mathcal{F}}(n,\sigma,D,K)=C_{\mathcal{F}}>0$ such that for all $g\in \mathcal{F}$ we have
\[
  \int_{\T^n}\A_k(|\nabla s^{k}|^2)dV_{g}
  \leq C_{\mathcal{F}}\left(\|\Scalminus_{g}\|_{L^{1}(g)}+k^{-2}\right).
\]
\end{lemma}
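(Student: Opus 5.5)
The plan is to combine the global Bochner inequality of Lemma~\ref{lem:global-bochner} with the $L^{\infty}$ bound on $\rho_{k}$ from Corollary~\ref{cor:rho-linfty}, after transferring everything from the cover $(\T^n,g^k)$ down to $(\T^n,g)$ through the averaging operator $\A_k$.

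The first step is the averaging identity. Since $\pi^{k}$ is a local isometry of degree $k^{n}$, for any integrable function $f$ on $(\T^{n},g^{k})$ we have
\[
  \int_{\T^n}\A_k f\, dV_{g}=\frac{1}{k^{n}}\int_{\T^n} f\, dV_{g^{k}}.
\]
Applying this to $f=|\nabla s^{k}|^{2}$ and using Lemma~\ref{lem:global-bochner} gives
\[
  \int_{\T^n}\A_k(|\nabla s^{k}|^2)\,dV_g
  \leq \frac{1}{k^{n}}\int_{\T^n}\left(\tfrac14\Scalminus_{g^{k}}+C_{\mathcal F}k^{-2}\right)|s^{k}|^{2}\,dV_{g^{k}} .
\]

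The second step is to push the right-hand side down as well. Because $\Scalminus_{g^{k}}=\Scalminus_{g}\circ\pi^{k}$ is invariant under deck transformations, we have
\[
  \A_k\bigl(\Scalminus_{g^k}|s^k|^2\bigr)=\Scalminus_{g}\,\rho_k .
\]
The averaging identity then turns the right-hand side into
\[
  \int_{\T^n}\left(\tfrac14\Scalminus_{g}+C_{\mathcal F}k^{-2}\right)\rho_k\,dV_g .
\]

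The third step bounds the two resulting terms separately.
\begin{enumerate}
  \item For the scalar curvature term, Corollary~\ref{cor:rho-linfty} gives
  \[
    \int_{\T^n}\Scalminus_g\,\rho_k\,dV_g\leq C_{\mathcal F}\|\Scalminus_g\|_{L^1(g)} .
  \]
  \item For the $k^{-2}$ term, the normalization \eqref{eq:rho-mean-one} gives $\int_{\T^n}\rho_k\,dV_g=\mathrm{Vol}_g(\T^n)$. By Bishop--Gromov this is bounded above by a constant depending on $n$, $K$ and $D$, so the term is at most $C_{\mathcal F}k^{-2}$.
\end{enumerate}
Adding the two bounds gives the claimed inequality.

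There is no serious obstacle, since the essential analytic input, the uniform $L^\infty$ bound on $\rho_k$, has already been obtained. The only point needing care is that the constant in Corollary~\ref{cor:rho-linfty} is independent of $k$. That independence comes from applying Moser iteration on the base $(\T^n,g)$ rather than on the cover, and it is exactly what makes the averaging bookkeeping above legitimate.
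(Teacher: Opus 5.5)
Your proposal is correct and follows the paper's proof. You divide the global Bochner inequality of Lemma~\ref{lem:global-bochner} by $k^n$, transfer to the base via the averaging identity and the deck invariance of $\Scalminus_{g^k}$, and then apply the $k$-independent bound on $\|\rho_k\|_{L^\infty}$ from Corollary~\ref{cor:rho-linfty}. The only cosmetic difference is that you handle the $k^{-2}$ term with the mean-one normalization and the Bishop--Gromov volume bound instead of the $L^\infty$ bound, which is equally valid.
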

\begin{proof}
Divide Lemma~\ref{lem:global-bochner} by $k^{n}$ and use the averaging identities to obtain
\[
  \int_{\T^n}\A_k(|\nabla s^{k}|^2)dV_{g}
  \leq
  \int_{\T^n}\left(\frac14\Scalminus_{g}+C_{\mathcal{F}}k^{-2}\right)\rho_{k}dV_{g}.
\]
Now we may use the bound on $\|\rho_{k}\|_{L^{\infty}}$ from Corollary~\ref{cor:rho-linfty}.
\end{proof}

The above bound on the average spinor energy will control the $L^{2}$ norm of $d\rho_{k}$.
Together with the Poincar{\'e} Inequality, we will see that $\rho_{k}$ converges to $1$ in $L^{2}$-norm.
\begin{lemma}\label{lem:rho-almost-constant}
There is a constant $C_{\mathcal{F}}(n,\sigma,D,K)=C_{\mathcal{F}}>0$ such that
\[
  \int_{\T^n}|d\rho_{k}|^2 dV_{g}
  \leq C_{\mathcal{F}}\left(\|\Scalminus_g\|_{L^{1}(g)}+k^{-2}\right).
\]
and
\[
  \|\rho_{k}-1\|_{L^2(g)}
  \leq C_{\mathcal{F}}(\|\Scalminus_g\|_{L^{1}(g)}+k^{-2})^{1/2}.
\]
\end{lemma}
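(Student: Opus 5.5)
The plan is to combine the pointwise gradient bound of Lemma~\ref{lem:nabla-rho_ptwise_estimate} with the $L^\infty$ bound of Corollary~\ref{cor:rho-linfty}, and then feed the averaged energy estimate of Lemma~\ref{lem:averaged-spinor-energy} into it. Concretely, integrating \eqref{eq:rho-gradient-pointwise} over $(\T^n,g)$ gives
\[
  \int_{\T^n}|d\rho_k|^2\,dV_g\le 4\|\rho_k\|_{L^\infty(g)}\int_{\T^n}\A_k\left(|\nabla s^k|^2\right)dV_g\le C_{\mathcal F}\left(\|\Scalminus_g\|_{L^1(g)}+k^{-2}\right).
\]
This is the first claim, with the constant depending only on $n,\sigma,D,K$ because both ingredients do.

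For the second claim, I would apply the Poincar\'e inequality. Theorem~\ref{thm:Poincare_lower_bound-Ricci_lower_bound} with $R=D$, $x$ arbitrary and $B(x,D)=\T^n$ gives an $L^1$-Poincar\'e inequality in averaged norms with constant $C(n,KD^2)D$. Proposition~\ref{prop:global_Poincare_from_Ricci_diameter_and_volume} then upgrades it to an $L^2$ statement. Alternatively, one can use the $\nu=n/(n-1)$ version together with H\"older to obtain
\[
  \|\rho_k-(\rho_k)_{\T^n}\|_{\overline L^2(g)}\le C(n,K,D)\,\|d\rho_k\|_{\overline L^2(g)}.
\]
Here I would use the version for $p=2$ with $s=n$ (for $n\ge3$; in dimension two use any $s>2$), so that $2\le sp/(s-p)$. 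Applied to $u=\rho_k-(\rho_k)_{\T^n}$, whose mean is zero, and combined with the $L^1$-Poincar\'e inequality to absorb the $\|u\|_{\overline L^p}$ term, this gives the Poincar\'e–Sobolev estimate above. Alternatively, one simply invokes the standard $L^2$ Poincar\'e inequality under Ricci and diameter bounds.

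By \eqref{eq:rho-mean-one}, $(\rho_k)_{\T^n}=1$. Converting the averaged norms back to unnormalized ones multiplies both sides by the same factor $\mathrm{Vol}_g(\T^n)^{1/2}$, so the volume cancels:
\[
  \|\rho_k-1\|_{L^2(g)}\le C_{\mathcal F}\|d\rho_k\|_{L^2(g)}\le C_{\mathcal F}\left(\|\Scalminus_g\|_{L^1(g)}+k^{-2}\right)^{1/2}.
\]

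The only mildly delicate point is getting the $L^2$ Poincar\'e inequality from the stated $L^1$-type tools with a constant depending only on $n,K,D$. This follows from the Sobolev inequality of Proposition~\ref{prop:global_Poincare_from_Ricci_diameter_and_volume} applied to a mean-zero function, after controlling $\|u\|_{\overline L^1}$ via Theorem~\ref{thm:Poincare_lower_bound-Ricci_lower_bound} and H\"older. Everything else is direct substitution.
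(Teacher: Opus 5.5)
Your proposal is correct and follows the paper's proof essentially step for step. You integrate Lemma~\ref{lem:nabla-rho_ptwise_estimate} against Corollary~\ref{cor:rho-linfty} and Lemma~\ref{lem:averaged-spinor-energy}, then use $(\rho_k)_{\T^n}=1$ with the $L^1$-Poincar\'e inequality of Theorem~\ref{thm:Poincare_lower_bound-Ricci_lower_bound}, upgraded to $L^2$ via Proposition~\ref{prop:global_Poincare_from_Ricci_diameter_and_volume}. The one step you leave implicit is the interpolation $\|u\|_{\overline L^2}\le\varepsilon\|u\|_{\overline L^{q}}+C_\varepsilon\|u\|_{\overline L^1}$, with $q=2s/(s-2)>2$, which is needed to absorb the $\|u\|_{\overline L^2}$ term before the $L^1$-Poincar\'e bound enters; this is exactly the paper's ``interpolation between $L^1$ and $L^2$''.
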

\begin{proof}
Integrating \eqref{eq:rho-gradient-pointwise} and using the uniform
$L^\infty$ bound from Corollary~\ref{cor:rho-linfty}, we obtain
\begin{align*}
  \int_{\T^n}|d\rho_{k}|^2 dV_{g}
  &\leq 4\|\rho_{k}\|_{L^\infty(g)}
     \int_{\T^n}\A_k(|\nabla s^{k}|^2) dV_{g} \\
  &\leq C_{\mathcal{F}}\left(\|\Scalminus_{g}\|_{L^{1}(g)}+k^{-2}\right),
\end{align*}
where the last step is Lemma~\ref{lem:averaged-spinor-energy}.

For the second part,   by
\eqref{eq:rho-mean-one} we have
\[
  \frac1{\mathrm{Vol}_{g}(\T^n)}\int_{\T^n}\rho_{k}dV_{g}=1.
\]
We apply the $L^1$ Poincar\'e Inequality (Theorem \ref{thm:Poincare_lower_bound-Ricci_lower_bound}) to $\rho_{k}-1$.
Next, using Proposition \ref{prop:global_Poincare_from_Ricci_diameter_and_volume} together with interpolation between $L^1$ and $L^2$ shows that 
\[
  \|\rho_{k}-1\|_{L^2(g)}
  \leq C_{\mathcal{F}}\|d\rho_{k}\|_{L^2(g)}.
\]
Combining this with the first part proves that
\[
  \|\rho_{k}-1\|_{L^2(g)}
  \leq C_{\mathcal{F}}(\|\Scalminus_{g}\|_{L^{1}(g)}+k^{-2})^{1/2}.
\]
\end{proof}
%
%

\subsection{Clifford multiplication by harmonic 1-forms}
In this section, we will see that harmonic spinors on the covering space, and specifically
their averaged norm $\rho_{k}$, have a close relationship with harmonic 1-forms on the base.
\begin{lemma}\label{lem:dirac-identity}
  For $g\in \mathcal{F}$, let $\omega$ be any harmonic 1-form on $(\mathbb{T}^{n},g)$.
  Set $\omega^{k}=(\pi^{k})^{*}\omega$, and for any $s^{k}\in \ker \slashed D_{E^{k}}$ define
  $\psi^{k}=\omega^{k}\cdot s^{k}$.
  Then we have
\[
  \slashed D_{E^{k}}(\psi^{k})
  =-2\nabla_{V^{k}} s^{k},
\]
where $V^{k}$ is the vector field dual to $\omega^{k}$.
In particular,
\[
  |\slashed D_{E^{k}}\psi^{k}|
  \leq 2|\omega^{k}||\nabla s^{k}|.
\]
\end{lemma}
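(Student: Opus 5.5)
This is a local computation in an orthonormal frame, using the product rule for the Dirac operator on a Clifford product. Fix $x$ and a local orthonormal frame $\{e_i\}$ of $(\T^n,g^k)$ that is synchronous at $x$. Write $\slashed D_{E^k}=\sum_i e_i\cdot\nabla_{e_i}$, where $\nabla$ is the tensor product of the spin connection and the connection on $E^k$. Clifford multiplication by a form acts on the spinor factor only, and it is parallel with respect to $\nabla$. Leibniz therefore gives
\[
  \slashed D_{E^k}(\omega^k\cdot s^k)=\sum_i e_i\cdot(\nabla_{e_i}\omega^k)\cdot s^k+\sum_i e_i\cdot\omega^k\cdot\nabla_{e_i}s^k .
\]

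For the first sum, the standard identity $\sum_i e_i\cdot\nabla_{e_i}\omega=(d+d^*)\omega$ holds under the identification of the Clifford algebra with forms. Here $\omega^k=(\pi^k)^*\omega$ is harmonic because $\pi^k$ is a local isometry, so $d\omega^k=0$ and $d^*\omega^k=0$. Hence the first sum vanishes.

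For the second sum, I use the Clifford relation $e_i\cdot\omega^k=-\omega^k\cdot e_i-2\,\omega^k(e_i)$ (the convention $v\cdot w+w\cdot v=-2g(v,w)$). This gives
\[
  \sum_i e_i\cdot\omega^k\cdot\nabla_{e_i}s^k=-\omega^k\cdot\slashed D_{E^k}s^k-2\sum_i\omega^k(e_i)\nabla_{e_i}s^k .
\]
The term $\omega^k\cdot\slashed D_{E^k}s^k$ vanishes because $s^k$ is harmonic. The remaining term is $-2\nabla_{V^k}s^k$, since $\sum_i\omega^k(e_i)e_i=V^k$. This proves the identity.

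For the pointwise bound, $|\nabla_{V^k}s^k|\le|V^k||\nabla s^k|=|\omega^k||\nabla s^k|$ by Cauchy--Schwarz, where $|\nabla s^k|^2=\sum_i|\nabla_{e_i}s^k|^2$.

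The only step requiring care is the sign and normalization conventions: the Clifford relation, and the fact that the $e_i\cdot\nabla_{e_i}\omega$ sum equals $(d+d^*)\omega$ rather than merely being related to it. I would verify the latter by splitting $e_i\cdot\alpha=e_i\wedge\alpha-\iota_{e_i}\alpha$ for a 1-form $\alpha$. The wedge part sums to $d\omega$, and the contraction part sums to $-\sum_i(\nabla_{e_i}\omega)(e_i)=d^*\omega$. Both vanish for harmonic $\omega$, so the sign convention does not matter.
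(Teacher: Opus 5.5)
Your proposal is correct and follows the same route as the paper. The paper simply cites the Clifford commutator formula $\slashed D_{E^k}(\omega^k\cdot s^k)=(d\omega^k+\delta\omega^k)\cdot s^k-\omega^k\cdot\slashed D_{E^k}s^k-2\nabla_{V^k}s^k$, kills the first two terms using harmonicity of $\omega^k$ and $s^k$, and then uses the same Cauchy--Schwarz bound; you derive that formula explicitly in a local orthonormal frame.
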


\begin{proof}
Using that $\slashed D s^{k}=0$ and $(d+\delta)\omega^k=0$, the Clifford commutator formula gives
\begin{align*}
  \slashed D_{E^k}(\omega^k\cdot s^k)
  &=(d\omega^k+\delta\omega^k)\cdot s^k
    -\omega^k\cdot\slashed D_{E^k}s^k
    -2\nabla_{V^k}s^k \\
  &=-2\nabla_{V^k}s^k.
\end{align*}
The pointwise estimate follows from
$|\nabla_{V^k}s^k|\leq|V^k|\,|\nabla s^k|=|\omega^k|\,|\nabla s^k|$.
\end{proof}

In fact, we can bound the average $L^{2}$-norm of $\nabla\psi^{k}$ in terms of $\|\omega\|_{L^{\infty }}$
and $\|\Scalminus_{g}\|_{L^{1}}$.
\begin{lemma}\label{lem:psi-energy}
  There exists a constant $C_{\mathcal{F}}(n,\sigma,D,K)=C_{\mathcal{F}}>0$ such that for any 
  $g\in \mathcal{F}$, any harmonic 1-form $\omega$ and normalized $s^{k}$ in $\ker\slashed D_{E^{k}}$ the spinor
  $\psi^{k}=\omega^{k}\cdot s^{k}$ satisfies
\[
  \int_{\T^n}\A(|\nabla\psi_{k}|^2) dV_{g^{k}}
  \leq C_{\mathcal{F}}\|\omega\|^{2}_{L^{\infty }}(\|\Scalminus_{g}\|_{L^{1}(g)}+k^{-2}).
\]
\end{lemma}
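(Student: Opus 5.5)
The plan is to avoid expanding $\nabla\psi^{k}=\nabla\omega^{k}\cdot s^{k}+\omega^{k}\cdot\nabla s^{k}$ directly. That expansion would produce a term $|\nabla\omega^{k}|^{2}|s^{k}|^{2}$, and controlling $\nabla\omega$ is exactly what this section is trying to achieve, so using it here would be circular. Instead I will apply the twisted Lichnerowicz formula to $\psi^{k}$ itself. This trades $\int|\nabla\psi^{k}|^{2}$ for $\int|\slashed D_{E^{k}}\psi^{k}|^{2}$ plus a curvature term, and Lemma~\ref{lem:dirac-identity} already controls $\slashed D_{E^{k}}\psi^{k}$ by $|\omega^{k}||\nabla s^{k}|$. (I read the integral in the statement as being over the base with respect to $dV_{g}$, which matches how $\A$ is used in Lemma~\ref{lem:averaged-spinor-energy}.)

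\emph{Step 1: integrated Weitzenb\"ock identity.} Since $(\T^{n},g^{k})$ is closed, we may integrate $\slashed D_{E^{k}}^{2}=\nabla^{*}\nabla+\frac14\Scal_{g^{k}}+\mathfrak R^{E^{k}}$ against $\psi^{k}$ and integrate by parts. This gives
\[
  \int_{\T^{n}}|\nabla\psi^{k}|^{2}dV_{g^{k}}
  =\int_{\T^{n}}|\slashed D_{E^{k}}\psi^{k}|^{2}dV_{g^{k}}
  -\int_{\T^{n}}\left\langle\left(\tfrac14\Scal_{g^{k}}+\mathfrak R^{E^{k}}\right)\psi^{k},\psi^{k}\right\rangle dV_{g^{k}}.
\]

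\emph{Step 2: pointwise bounds.} For the first term, Lemma~\ref{lem:dirac-identity} gives $|\slashed D_{E^{k}}\psi^{k}|^{2}\leq 4|\omega^{k}|^{2}|\nabla s^{k}|^{2}$. For the second term, the bound \eqref{eq:curvature-term-bound} gives
\[
  -\left\langle\left(\tfrac14\Scal_{g^{k}}+\mathfrak R^{E^{k}}\right)\psi^{k},\psi^{k}\right\rangle\leq\left(\tfrac14\Scalminus_{g^{k}}+C_{\mathcal F}k^{-2}\right)|\psi^{k}|^{2}.
\]
Clifford multiplication by a $1$-form satisfies $|\omega^{k}\cdot s^{k}|=|\omega^{k}||s^{k}|$, so $|\psi^{k}|^{2}=|\omega^{k}|^{2}|s^{k}|^{2}$. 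Moreover $\pi^{k}$ is a local isometry, so $\|\omega^{k}\|_{L^{\infty}}=\|\omega\|_{L^{\infty}}$. Putting these together,
\[
  |\nabla\psi^{k}|^{2}\text{ integrates to at most }\|\omega\|_{L^{\infty}}^{2}\int_{\T^{n}}\left(4|\nabla s^{k}|^{2}+\left(\tfrac14\Scalminus_{g^{k}}+C_{\mathcal F}k^{-2}\right)|s^{k}|^{2}\right)dV_{g^{k}}.
\]

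\emph{Step 3: averaging down to the base.} Divide by $k^{n}$. By the identity $\frac{1}{k^{n}}\int_{\T^{n}}f\,dV_{g^{k}}=\int_{\T^{n}}\A_{k}f\,dV_{g}$, and because $\Scalminus_{g^{k}}$ is the pullback of $\Scalminus_{g}$, the right-hand side becomes
\[
  \|\omega\|_{L^{\infty}}^{2}\left(4\int\A_{k}(|\nabla s^{k}|^{2})\,dV_{g}+\int\left(\tfrac14\Scalminus_{g}+C_{\mathcal F}k^{-2}\right)\rho_{k}\,dV_{g}\right).
\]
Lemma~\ref{lem:averaged-spinor-energy} bounds the first integral by $C_{\mathcal F}(\|\Scalminus_{g}\|_{L^{1}(g)}+k^{-2})$. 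Corollary~\ref{cor:rho-linfty} gives $\|\rho_{k}\|_{L^{\infty}}\leq C_{\mathcal F}$, and the diameter and Ricci bounds bound $\mathrm{Vol}_{g}(\T^{n})$ (Bishop--Gromov), so the second integral is also at most $C_{\mathcal F}(\|\Scalminus_{g}\|_{L^{1}(g)}+k^{-2})$.

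The only conceptual point is the choice in Step 1 of using the Weitzenb\"ock identity for $\psi^{k}$, which avoids any appearance of $\nabla\omega$. Once that choice is made, the remaining work is bookkeeping. The sign conventions in the integration by parts need no special care, because every term is handled through the one-sided bound involving $\Scalminus$.
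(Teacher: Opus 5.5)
Your proposal is correct and follows the paper's proof essentially step for step: the integrated Lichnerowicz formula applied to $\psi^{k}$, the bound $|\slashed D_{E^{k}}\psi^{k}|\le 2|\omega^{k}||\nabla s^{k}|$ from Lemma~\ref{lem:dirac-identity}, the identity $|\psi^{k}|^{2}=|\omega^{k}|^{2}|s^{k}|^{2}$, averaging down to the base, and then Lemma~\ref{lem:averaged-spinor-energy} together with Corollary~\ref{cor:rho-linfty}. Your reading of the left-hand side as an integral against $dV_{g}$ matches what the paper's proof actually establishes. Your explicit use of the volume upper bound for the $k^{-2}\rho_{k}$ term fills in a step the paper leaves implicit.
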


\begin{proof}
The integrated Lichnerowicz formula applied to $\psi^{k}$ gives
\[
  \int_{\T^n}|\nabla\psi^{k}|^2dV_{g^{k}}
  \leq
  \int_{\T^n}|\slashed D\psi^{k}|^2dV_{g^{k}}
  +\int_{\T^n}\left(\frac14\Scalminus_{g^{k}}+C_{\mathcal{F}}k^{-2}\right)|\psi^{k}|^2dV_{g^{k}}.
\]
We may estimate the first term on the right by using Lemma~\ref{lem:dirac-identity}:
\[
  |\slashed D\psi^{k}|^2\leq 4\|\omega\|^{2}_{L^{\infty }(g)}|\nabla s^{k}|^2.
\]
Furthermore, we have $\lvert \psi^{k} \rvert^{2}=\lvert \omega^{k} \rvert^{2} \lvert s^{k} \rvert^{2}$.
It follows that  
\begin{equation}
  \begin{split}
    \int_{\T^n}|\nabla\psi^{k}|^2dV_{g^{k}}
    &\leq
    4\|\omega\|_{L^{\infty }}^{2}\int_{\T^{n}}\lvert \nabla s^{k} \rvert^{2}dV_{g^{k}}
    \\
    &+
    \int_{\T^n}
    \left(\frac14\Scalminus_{g^{k}}+
    C_{\mathcal{F}}k^{-2}\right)\left(\lvert \omega^{k} \rvert\lvert s^{k} \rvert\right)^2dV_{g^{k}}.
    \end{split}
\end{equation}
Since $\omega^{k}$ is the pullback of the form $\omega$ and $\Scalminus_{g^{k}}$ is the pullback of $\Scalminus_{g}$,
we have
\[
  \A_{k}\left(\left(\frac14\Scalminus_{g^{k}}+
    C_{\mathcal{F}}k^{-2}\right)\lvert \omega^{k} \rvert^{2}\lvert s^{k} \rvert^{2}\right)
  =\left(\frac{1}{4}\Scalminus_{g}+C_{\mathcal{F}}k^{-2}\right)|\omega|^{2}\A_{k}\left(\lvert s^{k} \rvert^{2}\right).
\]
So, we may average to obtain
\begin{equation}
  \begin{split}
    \int_{\T^{n}}\A_{k}\left(|\nabla \psi^{k}|^{2}\right)dV_{g}
    &\leq
    4\|\omega\|^{2}_{L^{\infty }}\int_{\T^{n}}\A_{k}(|\nabla s^{k}|^{2})dV_{g}
    \\
    &+
    \int_{\T^{n}}
    \left(\frac{1}{4}\Scalminus_{g}+C_{\mathcal{F}}k^{-2}\right)
    |\omega|^{2}\rho_{k}dV_{g}.
  \end{split}
\end{equation}
At this point we may use Lemma~\ref{lem:averaged-spinor-energy} along with
Corollary \ref{cor:rho-linfty} to obtain the result.
\end{proof}

The next estimate converts control of $\nabla\psi^k$ into a weighted estimate for $\nabla\omega$.
\begin{lemma}\label{lem:weighted-omega}
  There exists a constant $C_{\mathcal{F}}(n,\sigma,D,K)=C_{\mathcal{F}}>0$ such that for any $g\in\F$,
  any harmonic 1-form $\omega$, and any normalized $s^{k}\in \ker\slashed D_{E^{k}}$,
  we have 
  \begin{equation}
    \int_{\T^{n}}\rho_{k}|\nabla \omega|^{2}dV_{g}\leq C(n,\sigma,D,K)
    \|\omega\|^{2}_{L^{\infty }}(\|\Scalminus_g\|_{L^{1}(g)}+k^{-2}).
  \end{equation}
\end{lemma}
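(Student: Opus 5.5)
Expand $\nabla\psi^{k}$ by the Leibniz rule for Clifford multiplication:
\[
  \nabla_X\psi^{k}=(\nabla_X\omega^{k})\cdot s^{k}+\omega^{k}\cdot\nabla_X s^{k}.
\]
This gives the pointwise bound
\[
  |(\nabla_X\omega^{k})\cdot s^{k}|\leq|\nabla_X\psi^{k}|+|\omega^{k}|\,|\nabla_X s^{k}|.
\]
Clifford multiplication by a real $1$-form $\eta$ is, up to sign convention, an isometry scaled by $|\eta|$: since $\eta\cdot\eta\cdot=-|\eta|^{2}$, we have $|\eta\cdot s|=|\eta|\,|s|$. Summing over an orthonormal frame $\{e_j\}$ then yields
\[
  |\nabla\omega^{k}|^{2}|s^{k}|^{2}=\sum_j|(\nabla_{e_j}\omega^{k})\cdot s^{k}|^{2}\leq 2|\nabla\psi^{k}|^{2}+2\|\omega\|^{2}_{L^\infty}|\nabla s^{k}|^{2}.
\]

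Next average over the fibers of $\pi^{k}$. Because $\nabla\omega^{k}=(\pi^{k})^{*}\nabla\omega$ is invariant under deck transformations, the left side averages exactly to $|\nabla\omega|^{2}\A_k(|s^{k}|^{2})=\rho_k|\nabla\omega|^{2}$. The right side averages to $2\A_k(|\nabla\psi^{k}|^{2})+2\|\omega\|^{2}_{L^\infty}\A_k(|\nabla s^{k}|^{2})$. Integrating over $(\T^n,g)$ gives
\[
  \int_{\T^n}\rho_k|\nabla\omega|^{2}dV_g\leq 2\int_{\T^n}\A_k(|\nabla\psi^{k}|^{2})dV_g+2\|\omega\|^{2}_{L^\infty}\int_{\T^n}\A_k(|\nabla s^{k}|^{2})dV_g.
\]
The first integral is bounded by Lemma~\ref{lem:psi-energy}, once that lemma's integral is read correctly as $\int\A_k(\cdot)\,dV_g$, i.e.\ as $k^{-n}\int(\cdot)\,dV_{g^k}$, which is what its proof actually establishes. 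The second integral is bounded by Lemma~\ref{lem:averaged-spinor-energy}. Each contributes $C_{\mathcal F}\|\omega\|^{2}_{L^\infty}(\|\Scalminus_g\|_{L^1(g)}+k^{-2})$, which is the claim.

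The only delicate point is the pointwise identity $|\eta\cdot s|=|\eta|\,|s|$ for the twisted bundle $S\otimes E^{k}$. Clifford multiplication acts only on the $S$ factor and is skew-adjoint there, so the identity holds on $S\otimes E^{k}$. With that, the cross term needs only $(a+b)^{2}\leq 2a^{2}+2b^{2}$, and no further input beyond the two earlier lemmas is required.
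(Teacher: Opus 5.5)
Your proposal is correct and follows the paper's proof essentially step for step: the Leibniz rule for $\nabla(\omega^k\cdot s^k)$, the identity $|\eta\cdot s|=|\eta|\,|s|$ for Clifford multiplication, averaging using the deck-invariance of $|\nabla\omega^k|^2$, and then Lemmas~\ref{lem:psi-energy} and~\ref{lem:averaged-spinor-energy}. You are also right to read the integral in Lemma~\ref{lem:psi-energy} as $\int\A_k(\cdot)\,dV_g$ rather than with $dV_{g^k}$, since that is what its proof actually establishes.
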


\begin{proof}
Recall that $\omega^k = (\pi^k)^\ast w$. Since
\[
  \nabla\psi^{k}
  = (\nabla\omega^{k})\cdot s^{k}
    +\omega^{k}\cdot\nabla s^{k},
\]
we immediately have 
\begin{equation}
  |(\nabla_{X}\omega^{k})\cdot s^{k}|^{2}\leq 2\left(|\omega^{k}|^{2}|\nabla_{X}s^{k}|^{2}+|\nabla_{X}\psi^{k}|^{2}\right).
\end{equation}
Next, recall that Clifford multiplication by unit vectors is an isometry, so that
\[
  |s^{k}|^2|\nabla_{X}\omega^{k}|^2
  \leq
  2\left(|\nabla_{X}\psi^{k}|^2
  +|\omega^{k}|^2|\nabla_{X}s^{k}|^2\right).
\]
This leaves us with
\begin{equation}
  |s^{k}|^{2}|\nabla \omega^{k}|^{2}\leq 2(|\nabla \psi^{k}|^{2}+|\omega^{k}|^{2}|\nabla s^{k}|^{2}).
\end{equation}
We now apply the averaging operation $\A$ and integrate.
Since $\omega^{k}$ is the pullback of $\omega$, the left-hand side becomes
\[
  \int_{\T^n}\rho_{k}|\nabla\omega|^2 dV_{g},
\]
while the two terms on the right are controlled by Lemma~\ref{lem:psi-energy},
the quantity $\|\omega\|_{L^{\infty }}$, and Lemma~\ref{lem:averaged-spinor-energy}.
\end{proof}

\begin{lemma}\label{lem:weighted-form-gradient}
  There exists a constant $C_{\mathcal{F}}(n,\sigma,D,K)=C_{\mathcal{F}}>0$ such that for all $g\in \F$,
  harmonic 1-forms $\omega$ on $(\T^n, g)$, $k\in\N$, and normalized $s^{k}\in \ker\slashed D_{E^{k}}$,  we have
  \begin{equation}
    \int_{\T^{n}}|\nabla(\rho_{k}\omega)|^{2}dV_{g}\leq
    C_{\mathcal{F}}\|\omega\|^{2}_{L^{\infty }}(\|\Scalminus_{g}\|_{L^{1}}+k^{-2}).
  \end{equation}
\end{lemma}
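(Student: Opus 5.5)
Expand by the product rule: $\nabla(\rho_k\omega)=d\rho_k\otimes\omega+\rho_k\nabla\omega$, so pointwise
\[
  |\nabla(\rho_k\omega)|^2\leq 2|\omega|^2|d\rho_k|^2+2\rho_k^2|\nabla\omega|^2 .
\]
The goal is to bound each of the two integrals on the right by $C_{\mathcal F}\|\omega\|^2_{L^\infty}(\|\Scalminus_g\|_{L^1}+k^{-2})$, using only results already established for the fixed base metric $g\in\F$.

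For the first term, pull out $\|\omega\|^2_{L^\infty}$ and apply the first estimate of Lemma~\ref{lem:rho-almost-constant}:
\[
  \int_{\T^n}|\omega|^2|d\rho_k|^2\,dV_g\leq\|\omega\|^2_{L^\infty}\int_{\T^n}|d\rho_k|^2\,dV_g\leq C_{\mathcal F}\|\omega\|^2_{L^\infty}\bigl(\|\Scalminus_g\|_{L^1(g)}+k^{-2}\bigr).
\]
This step is routine.

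For the second term, write $\rho_k^2|\nabla\omega|^2=\rho_k\cdot\rho_k|\nabla\omega|^2$. Then use the uniform bound $\|\rho_k\|_{L^\infty}\leq C_{\mathcal F}$ from Corollary~\ref{cor:rho-linfty} to reduce it to $C_{\mathcal F}\int\rho_k|\nabla\omega|^2\,dV_g$. This weighted integral is exactly what Lemma~\ref{lem:weighted-omega} controls, and it yields the bound $C_{\mathcal F}\|\omega\|^2_{L^\infty}(\|\Scalminus_g\|_{L^1}+k^{-2})$. Adding the two contributions proves the lemma.

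The weight $\rho_k$ is what prevents a direct unweighted bound on $\nabla\omega$: Lemma~\ref{lem:weighted-omega} only controls $\nabla\omega$ where $\rho_k$ is not small. Multiplying $\omega$ by $\rho_k$ absorbs one factor of the weight, and the $L^\infty$ bound on $\rho_k$ supplies the other. The main point to check is therefore that all constants are uniform in $k$, $\omega$ and $g\in\F$. This holds because each ingredient (Corollary~\ref{cor:rho-linfty}, Lemma~\ref{lem:rho-almost-constant}, Lemma~\ref{lem:weighted-omega}) depends only on $(n,\sigma,D,K)$ and is applied on the base torus, never on the cover, whose diameter grows with $k$.
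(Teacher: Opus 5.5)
Your proposal is correct and follows the paper's proof essentially line for line. You expand by the product rule, bound the $|d\rho_k|^2|\omega|^2$ term with the first estimate of Lemma~\ref{lem:rho-almost-constant}, and bound the $\rho_k^2|\nabla\omega|^2$ term via $\rho_k^2\le\|\rho_k\|_{L^\infty}\rho_k$, using Corollary~\ref{cor:rho-linfty} and Lemma~\ref{lem:weighted-omega}; this last inequality relies on $\rho_k\ge0$, which holds since $\rho_k$ is an average of $|s^k|^2$.
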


\begin{proof}
  Since
\[
  \nabla(\rho_{k}\omega)=d\rho_{k}\otimes\omega+\rho_{k}\nabla\omega,
\]
we have
\[
  |\nabla(\rho_{k}\omega)|^2
  \leq
  2|d\rho_{k}|^2|\omega|^2+2\rho_{k}^2|\nabla\omega|^2.
\]
Integrating and using $\|\omega\|_{L^{\infty }}$, Lemma~\ref{lem:rho-almost-constant}, Corollary~\ref{cor:rho-linfty},
and Lemma~\ref{lem:weighted-omega}, we get
\begin{align*}
  \int_{\T^n}|\nabla(\rho_{k}\omega)|^2 dV_{g}
  &\leq
  2\|\omega\|_{L^\infty(g)}^2
  \int_{\T^n}|d\rho_{k}|^2 dV_{g}
  +2\|\rho_{k}\|_{L^\infty(g)}
  \int_{\T^n}\rho_{k}|\nabla\omega|^2 dV_{g} \\
  &\leq C_{\mathcal{F}}\|\omega\|^{2}_{L^{\infty}(g)}(\|\Scalminus_{g}\|_{L^{1}(g)}+k^{-2}).
\end{align*}
Here we used \(\rho_{k}\geq0\), so \(\rho_k^2\leq\rho_k\|\rho_k\|_{L^\infty}\).
\end{proof}

We can now remove the weight $\rho_k$ and obtain the desired estimate for the covariant derivative
of an harmonic $1$-form.
\begin{proposition}\label{prop:unweighted-hessian}
  There exists a constant $C_{\mathcal{F}}(n,\sigma,D,K)=C_{\mathcal{F}}>0$ such that for any $g\in \F$ and any harmonic 1-form $\omega$ on $(\T^n,g)$, we have
\begin{equation}
     \label{eq:unweighted-harmonic-form-gradient} \int_{\T^n}|\nabla\omega|^2\,dV_g
  \leq C_{\mathcal F}\|\omega\|_{L^\infty}^2
  \|\Scalminus_g\|_{L^1}. 
\end{equation}
\end{proposition}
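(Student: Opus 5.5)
The plan is to pass from the weighted estimate of Lemma~\ref{lem:weighted-omega} to an unweighted one by using that $\rho_k$ is close to $1$, and then to let $k\to\infty$. For each $k$ we have
\[
\int_{\T^n}|\nabla\omega|^2\,dV_g=\int_{\T^n}\rho_k|\nabla\omega|^2\,dV_g+\int_{\T^n}(1-\rho_k)|\nabla\omega|^2\,dV_g .
\]
The first term is bounded by $C_{\mathcal F}\|\omega\|_{L^\infty}^2(\|\Scalminus_g\|_{L^1}+k^{-2})$ by Lemma~\ref{lem:weighted-omega}. The second term is at most $\|1-\rho_k\|_{L^2(g)}\,\|\nabla\omega\|_{L^4(g)}^2$. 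Lemma~\ref{lem:rho-almost-constant} gives $\|1-\rho_k\|_{L^2}\le C_{\mathcal F}(\|\Scalminus_g\|_{L^1}+k^{-2})^{1/2}$, so everything hinges on an a priori bound
\[
\|\nabla\omega\|_{L^4(g)}^2\le C_{\mathcal F}\|\omega\|_{L^\infty}^2 .
\]
With that bound, letting $k\to\infty$ gives $\int|\nabla\omega|^2\le C_{\mathcal F}\|\omega\|_{L^\infty}^2(\epsilon+\epsilon^{1/2})$ with $\epsilon=\|\Scalminus_g\|_{L^1}$. This is the right form when $\epsilon\le1$, which is the regime of interest.

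To remove the square root we use the uniform bound $|\nabla\omega|\le C_{\mathcal F}\|\omega\|_{L^\infty}$ (obtained below) and split the domain instead. Where $\rho_k\ge 1/2$, we have $|\nabla\omega|^2\le 2\rho_k|\nabla\omega|^2$. The set $\{\rho_k<1/2\}$ has measure at most $4\|\rho_k-1\|_{L^2}^2\le C_{\mathcal F}(\epsilon+k^{-2})$ by Chebyshev, and on it $|\nabla\omega|^2\le C_{\mathcal F}\|\omega\|_{L^\infty}^2$. Hence
\[
\int_{\T^n}|\nabla\omega|^2\,dV_g\le C_{\mathcal F}\|\omega\|_{L^\infty}^2(\epsilon+k^{-2}),
\]
and letting $k\to\infty$ yields the linear bound \eqref{eq:unweighted-harmonic-form-gradient} for all $\epsilon$.

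The main obstacle is the uniform pointwise bound on $|\nabla\omega|$. We first try the Bochner formula. Since $\omega$ is harmonic, $\frac12\Delta|\omega|^2=|\nabla\omega|^2+\Ric(\omega,\omega)\ge|\nabla\omega|^2-K|\omega|^2$. Integrating over $\T^n$ gives the cheap bound $\int|\nabla\omega|^2\le K\int|\omega|^2\le C_{\mathcal F}\|\omega\|_{L^\infty}^2$, since volume is bounded by Bishop--Gromov and the diameter bound. If only this $L^2$ bound is available, we can still fall back on the splitting argument above with a higher-integrability estimate for $|\nabla\omega|$. Pointwise gradient bounds for harmonic forms under only a Ricci lower bound are not automatic, because the Bochner formula for $\nabla\omega$ involves the full curvature tensor.

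A cleaner route uses that $\omega$ is locally $du$ with $u$ harmonic. Then a Cheng--Yau type estimate on balls of radius about $\min(1,D)$, combined with a Caccioppoli or Moser argument, controls $\int_{B}|\nabla du|^2$ by $\|du\|_{L^\infty}^2$ times a volume factor. This gives a local $L^2$ bound on every ball, which is enough for the Chebyshev splitting provided we pair it with the $L^1$-smallness of the bad set, for example via $L^{2+\delta}$ higher integrability of $|\nabla\omega|$. We expect this higher-integrability step to be the delicate point. If it fails, we would settle for the $\epsilon^{1/2}$ version, which still suffices for the stability application.
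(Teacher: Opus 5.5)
Your argument has a genuine gap, at the step you flag yourself. Every version of it, including the $\epsilon^{1/2}$ fallback, needs an a priori bound on $\nabla\omega$ beyond $L^2$: pointwise, $L^4$, or at least uniform $L^{2+\delta}$. No such bound holds on $\mathcal F$.

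The Bochner formula (and your Cheng--Yau/Caccioppoli route) only controls $\int|\nabla\omega|^2$. A bounded $L^2$ norm together with $|\{\rho_k<1/2\}|$ small gives no smallness of $\int_{\{\rho_k<1/2\}}|\nabla\omega|^2$ without equi-integrability, and equi-integrability genuinely fails here. Here is a concrete example. On $\T^2$ (times a flat $\T^{n-2}$) let $g=e^{2f}g_{\mathrm{flat}}$. Take $f$ to be a mollification at scale $r_0$ of $2\pi(\beta-1)G$, where $G$ is the Green's function of the flat torus and $\beta\in(\tfrac12,1)$.
\begin{itemize}
\item Uniformly in $r_0$, the Gauss curvature is bounded below, the diameter is bounded, and $g\geq c\,g_{\mathrm{flat}}$. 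So these metrics lie in a single family $\mathcal F$, and $\|\Scalminus_g\|_{L^1}=O(1-\beta)$.
\item By conformal invariance of harmonic $1$-forms in dimension two, $\omega=dx$ is harmonic for every such $g$. Its norm $|\omega|_g=e^{-f}$ is uniformly bounded.
\item However, $|\nabla\omega|_g=\sqrt2\,e^{-2f}|df|_{g_{\mathrm{flat}}}$ behaves like $|z|^{1-2\beta}$ for $|z|\gg r_0$. Hence $\int|\nabla\omega|^p\,dV_g\to\infty$ as $r_0\to0$ whenever $p\geq 2\beta/(2\beta-1)$.
\end{itemize}
That threshold tends to $2$ as $\beta\to1$, so no uniform $L^{2+\delta}$ bound exists for any $\delta>0$, even when $\|\Scalminus_g\|_{L^1}$ is small. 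For $\beta\geq 2/3$ your $L^4$ bound already fails.

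The missing idea is to put the error on $\omega$, which is bounded, rather than on $\nabla\omega$. The paper writes $\omega=\alpha_k-\eta_k$ with $\alpha_k=\rho_k\omega$ and $\eta_k=(\rho_k-1)\omega$. Because $d\omega=0=\delta\omega$, one has $d\eta_k=d\alpha_k$ and $\delta\eta_k=\delta\alpha_k$. The integrated Bochner formula for $1$-forms, together with $\Ric_g\geq-K$, then gives
\[
\int_{\T^n}|\nabla\eta_k|^2\,dV_g\leq C(n)\int_{\T^n}|\nabla\alpha_k|^2\,dV_g+K\|\omega\|_{L^\infty}^2\|\rho_k-1\|_{L^2}^2 .
\]
Lemma~\ref{lem:weighted-form-gradient} and Lemma~\ref{lem:rho-almost-constant} bound both terms by $C_{\mathcal F}\|\omega\|_{L^\infty}^2(\|\Scalminus_g\|_{L^1}+k^{-2})$. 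Since $\|\rho_k-1\|_{L^2}$ enters squared, the estimate is linear, with no square root and no case split. Combining this with $\int|\nabla\omega|^2\leq2\int|\nabla\alpha_k|^2+2\int|\nabla\eta_k|^2$ and letting $k\to\infty$ proves the proposition. This uses only $L^2$ information about $\nabla\omega$.
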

\begin{proof}
For a smooth $1$-form $\beta$ on the closed manifold $\T^n$, the integrated Bochner formula is
\begin{equation}
  \label{eq:integrated-bochner-one-form}
  \int_{\T^n}|\nabla\beta|^2\,dV_g
  =\int_{\T^n}\bigl(|d\beta|^2+|\delta\beta|^2\bigr)\,dV_g
   -\int_{\T^n}\Ric_g(\beta,\beta)\,dV_g.
\end{equation}
For each $k\geq1$ we choose a normalized $s^k\in\ker\slashed D_{E^k}$, let $\rho_k$ be
as in Definition~\ref{def:rho-definition}, and set
\[
  \alpha_k:=\rho_k\omega,
  \qquad
  \eta_k:=\alpha_k-\omega=(\rho_k-1)\omega.
\]
Since $\omega=\alpha_k-\eta_k$,
\begin{equation}
  \label{eq:omega-alpha-eta}
  \int_{\T^n}|\nabla\omega|^2\,dV_g
  \leq2\int_{\T^n}|\nabla\alpha_k|^2\,dV_g
    +2\int_{\T^n}|\nabla\eta_k|^2\,dV_g.
\end{equation}
Lemma~\ref{lem:weighted-form-gradient} controls the first term, so we need only control the second.

Because $\omega$ is harmonic, $d\eta_k=d\alpha_k$ and $\delta\eta_k=\delta\alpha_k$.
Applying Equation~\eqref{eq:integrated-bochner-one-form} to $\eta_k$, using
$|d\alpha_k|^2+|\delta\alpha_k|^2\leq C(n)|\nabla\alpha_k|^2$ and
$\Ric_g\geq-Kg$, gives
\begin{align*}
  \int_{\T^n}|\nabla\eta_k|^2\,dV_g
  &\leq C(n)\int_{\T^n}|\nabla\alpha_k|^2\,dV_g
    +K\int_{\T^n}|\eta_k|^2\,dV_g \\
  &\leq C(n)\int_{\T^n}|\nabla\alpha_k|^2\,dV_g
    +K\|\omega\|_{L^\infty}^2\|\rho_k-1\|_{L^2}^2.
\end{align*}
Lemmas~\ref{lem:weighted-form-gradient} and \ref{lem:rho-almost-constant} therefore imply that
\[
  \int_{\T^n}|\nabla\eta_k|^2\,dV_g
  \leq C_{\mathcal F}\|\omega\|_{L^\infty}^2
  \left(\|\Scalminus_g\|_{L^1}+k^{-2}\right).
\]
Substituting this and Lemma~\ref{lem:weighted-form-gradient} into
Equation~\eqref{eq:omega-alpha-eta} yields
\[
  \int_{\T^n}|\nabla\omega|^2\,dV_g
  \leq C_{\mathcal F}\|\omega\|_{L^\infty}^2
  \left(\|\Scalminus_g\|_{L^1}+k^{-2}\right)
\]
for every $k\geq1$. Letting $k\to\infty$ proves
Equation~\eqref{eq:unweighted-harmonic-form-gradient}.
\end{proof}

\section{Stability}\label{sec:stability}
Let $\mathcal F$ be the family of Riemannian metrics on $\mathbb T^n$ from Theorem \ref{thm:main_result-torus_stability}. Throughout this section, we use the uniform volume bounds
\begin{equation}\label{eq:uniform-volume-bounds}
  0<v_{\mathcal F}
  \leq \operatorname{Vol}_g(\T^n)
  \leq V_{\mathcal F}<\infty,
  \qquad g\in\mathcal F.
\end{equation}
Here the lower bound follows from the stable systolic inequality, while the upper
bound follows from the Ricci lower bound and the diameter upper bound.

\begin{lemma}
   There exists a constant
  $C_{\mathcal F}=C(n,\sigma,D,K)>0$ such that, for every
  $g\in\mathcal F$, there is a degree-one harmonic map
  \[
    \mathbb U:(\T^n,g)\longrightarrow(\T^n,g_0),
  \]
  where \(g_0\) is the standard flat metric on $\T^n$, satisfying
  \[
    \|d\mathbb U\|_{L^\infty}\leq C_{\mathcal F}.
  \]
\end{lemma}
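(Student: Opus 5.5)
We would build $\mathbb U$ from harmonic representatives of the integral basis supplied by Lemma~\ref{lem:bounded_cohomology_basis}. That lemma gives a basis $\alpha_1,\dots,\alpha_n$ of $H^1(\T^n;\Z)_{\R}$ whose comass norms satisfy $\|\alpha_i\|^*_{\mathrm M}\leq C(n,\sigma)$. For each $i$ let $\omega_i$ be the $g$-harmonic representative of $\alpha_i$. Since $\alpha_i$ is integral, $\omega_i=du_i$ for a map $u_i:\T^n\to\Sp^1=\R/\Z$, and $u_i$ is harmonic because $\delta du_i=0$. Set $\mathbb U=(u_1,\dots,u_n)$. Each component is harmonic into a flat circle and $(\T^n,g_0)$ is a flat product, so $\mathbb U$ is a harmonic map. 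Its degree is $\pm1$ by the same argument as in Lemma~\ref{lem:uniform_enlargeability}: the pullbacks $\mathbb U^*[d\theta_i]=\alpha_i$ form a $\Z$-basis, so $\mathbb U^*$ is an isomorphism on $H^n(\cdot;\Z)$. Precomposing one coordinate with a reflection if needed, we may take $\deg\mathbb U=1$.

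The substantive step is the bound $\|\omega_i\|_{L^\infty}\leq C_{\mathcal F}$. The comass bound only says that \emph{some} representative $a_i=\omega_i+df_i$ has $\|a_i\|_{L^\infty}\leq C(n,\sigma)$; the harmonic representative can be larger pointwise. We would first get an $L^2$ bound. Since $\omega_i$ minimizes the $L^2$ norm in its class,
\[
\int_{\T^n}|\omega_i|^2\,dV_g\leq\int_{\T^n}|a_i|^2\,dV_g\leq C(n,\sigma)^2 V_{\mathcal F},
\]
using the volume upper bound in \eqref{eq:uniform-volume-bounds}.

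Next we upgrade $L^2$ to $L^\infty$ by Moser iteration. The Bochner formula for the harmonic $1$-form $\omega_i$ together with $\Ric_g\geq-K$ gives
\[
\tfrac12\Delta|\omega_i|^2=|\nabla\omega_i|^2+\Ric(\omega_i,\omega_i)\geq|\nabla\omega_i|^2-K|\omega_i|^2 .
\]
Kato's inequality $|\nabla\omega_i|\geq|d|\omega_i||$ then yields $\Delta|\omega_i|\geq-K|\omega_i|$ wherever $\omega_i\neq0$. The function $f=|\omega_i|$ is continuous, nonnegative and smooth on $\{f>0\}$, so it satisfies the hypotheses of Lemma~\ref{lem:Moser_it} with $\lambda=K$. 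The Sobolev hypothesis of that lemma holds with a uniform $S=S(n,D,K)$ by Theorem~\ref{thm:Poincare_lower_bound-Ricci_lower_bound} and Proposition~\ref{prop:global_Poincare_from_Ricci_diameter_and_volume}. (For $n=2$ one picks some admissible $\nu>1$; in any case one may assume $n$ even and large after taking products as before.) Lemma~\ref{lem:Moser_it} then gives
\[
\|\omega_i\|_{L^\infty}\leq C(n,D,K)\,\|\omega_i\|_{\overline L^2}\leq C(n,D,K)\,C(n,\sigma),
\]
since the normalized $\overline L^2$ norm is at most the $L^\infty$ norm of $a_i$.

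Summing over $i$, we get $|d\mathbb U|^2=\sum_i|\omega_i|^2\leq C_{\mathcal F}^2$, which is the claim. The main obstacle is making sure the Moser constant is uniform over $\mathcal F$. This uses only the diameter and Ricci bounds, through the uniform Sobolev constant, and we do not expect genuine difficulty there. One also needs $\mathbb U$ to be smooth, which holds because the $u_i$ are harmonic functions into $\Sp^1$.
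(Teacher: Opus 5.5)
Your proposal is correct and follows the paper's proof. Both take the $g$-harmonic representatives of the bounded integral basis from Lemma~\ref{lem:bounded_cohomology_basis}, bound them in $L^2$ by $L^2$-minimality, upgrade to $L^\infty$ via $\Delta_g|\omega_i|\geq -K|\omega_i|$ and Moser iteration, and flip one sign to make the degree one. Your observation that $\|\omega_i\|_{\overline L^2}\leq\|a_i\|_{L^\infty}$ holds directly is a small streamlining: it removes the need for the volume bounds \eqref{eq:uniform-volume-bounds}, which the paper invokes at this step.
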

\begin{proof}
    From Lemma \ref{lem:bounded_cohomology_basis}
    we may find closed 1-forms $a_i$ such that
    $\|a_i\|_{L^{\infty}}\leq C_{\mathcal{F}}$,
    which form a basis of $H^1(\T^n;\Z)_{\R}$.
    Let $\omega_i$ be the harmonic representative of $[a_i]$.
  Since harmonic representatives minimize the $L^2$-norm in their
  cohomology classes, \eqref{eq:uniform-volume-bounds} gives
  \[
    \|\omega_i\|_{L^2(g)}
    \leq \|a_i\|_{L^2(g)}
    \leq V_{\mathcal F}^{1/2}\|a_i\|_{L^\infty(g)}
    \leq C_{\mathcal F}.
  \]
  It follows from $\Ric_g\geq -K g$ that 
   \[
    \Delta_g|\omega_i|\geq-K|\omega_i|.
  \]
  Applying Lemma~\ref{lem:Moser_it}, together with  the lower volume bound in \eqref{eq:uniform-volume-bounds},
  yields
  \[
    \|\omega_i\|_{L^\infty(g)}
    \leq C_{\mathcal F}
       \|\omega_i\|_{\overline L^2(g)}
    \leq C_{\mathcal F}.
  \]
Because $[\omega_i]=[a_i]$ is an integral cohomology class,
  there is a  map
  $u_i:\T^n\to\R/\Z$ such that
  $du_i=\omega_i$. Each $u_i$ is harmonic. After replacing
  one basis element by its negative if necessary, we may assume that the basis
  \([\omega_1],\dots,[\omega_n]\) is positively oriented. Hence
  \[
    \mathbb U:=(u_1,\dots,u_n):\T^n\longrightarrow\R^n/\Z^n
  \]
  is harmonic and has degree one. Finally,
  \[
    |d\mathbb U|^2=\sum_{i=1}^n|\omega_i|^2,
  \]
  so the preceding \(L^\infty\)-bounds give the result.
\end{proof}
\begin{corollary}\label{cor:harmonic_small_hess}
    There exists a constant $C_{\mathcal{F}}(n,\sigma,D,K)=C_{\mathcal{F}}>0$ such that
    for any metric $g\in\F$ there is a degree 1 harmonic
    map $\U:(\T^n,g)\rightarrow(T^n,g_0)$ such that
    \begin{equation}
        \|\nabla d\U\|^{2}_{L^2}\leq
        C_{\mathcal{F}}\|\Scalminus_g\|_{L^1},
    \end{equation}
    and
    \begin{equation}
      \|d\U\|_{L^{\infty }}\leq C_{\mathcal{F}}.
    \end{equation}
\end{corollary}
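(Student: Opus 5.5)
We take $\U=(u_1,\dots,u_n)$ to be the degree-one harmonic map produced by the preceding lemma. By construction $du_i=\omega_i$, where each $\omega_i$ is a harmonic $1$-form on $(\T^n,g)$ with $\|\omega_i\|_{L^\infty(g)}\le C_{\mathcal F}$. The second assertion, $\|d\U\|_{L^\infty}\le C_{\mathcal F}$, is then exactly the conclusion of that lemma, so nothing further is needed for it.

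For the Hessian bound we work componentwise. Because the target $(\T^n,g_0)$ is flat and the coordinates $u_i$ are locally defined $\R$-valued functions, the second fundamental form of $\U$ is $\nabla d\U=(\nabla du_1,\dots,\nabla du_n)=(\nabla\omega_1,\dots,\nabla\omega_n)$. This gives the pointwise identity
\[
|\nabla d\U|^2=\sum_{i=1}^n|\nabla\omega_i|^2 .
\]
We apply Proposition~\ref{prop:unweighted-hessian} to each harmonic form $\omega_i$ and obtain
\[
\int_{\T^n}|\nabla\omega_i|^2\,dV_g\le C_{\mathcal F}\|\omega_i\|_{L^\infty}^2\|\Scalminus_g\|_{L^1}\le C_{\mathcal F}\|\Scalminus_g\|_{L^1}.
\]
Summing over $i=1,\dots,n$ yields $\|\nabla d\U\|_{L^2}^2\le nC_{\mathcal F}\|\Scalminus_g\|_{L^1}$. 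The factor $n$ is absorbed into $C_{\mathcal F}$.

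All of the analytic work was done earlier: Proposition~\ref{prop:unweighted-hessian} supplies the Hessian estimate, and the Moser iteration in the preceding lemma supplies the $L^\infty$ bound on the $\omega_i$. The only point needing care is to use the same constant-controlled basis $\omega_i$ for both bounds, so that the constants depend only on $(n,\sigma,D,K)$. This holds because the $\omega_i$ are the harmonic representatives of the basis from Lemma~\ref{lem:bounded_cohomology_basis}.
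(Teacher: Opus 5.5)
Your proposal is correct and follows the paper's proof: take the degree-one harmonic map $\U$ from the preceding lemma, which gives the $L^\infty$ bound, and apply Proposition~\ref{prop:unweighted-hessian} to each coordinate form $du_i=\omega_i$, summing and using $\|\omega_i\|_{L^\infty}\le C_{\mathcal F}$. You also make explicit that the target is flat, so $|\nabla d\U|^2=\sum_i|\nabla\omega_i|^2$ pointwise, a step the paper leaves implicit.
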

\begin{proof}
    Apply the results of Section \ref{sec:scal_bounds_harmonic_1-forms}, specifically
    Proposition \ref{prop:unweighted-hessian}, to each
    of the coordinate functions of $\U$.
\end{proof}
Since we will apply one half of the classification theorem given in 
\cite{Honda-Ketterer-Christian-Mondello-Perales-Rigoni}, let us recall the required definition.
\begin{definition}[{\cite[Definition 1.1]{Honda-Ketterer-Christian-Mondello-Perales-Rigoni}}]\label{def:good_harmonics}
    A smooth map $\Phi:(M,g)\rightarrow \R^n/\Z^n$ is called $(\delta;C,\tau)$-\textit{harmonic} if the following
    conditions are satisfied.
    \begin{enumerate}[(1)]
        \item $\Phi$ is harmonic;
        \item The (averaged) energy of $\Phi$ is bounded by $C$:
              \[
              E(\Phi):=\frac{1}{2}|M|^{-1}\int_M\langle\Phi^*g_{\R^n/\Z^n},g_M\rangle dV_{g}\leq C;
              \]
        \item $\Phi$ is non-degenerate in the following sense
              \[
              D(\Phi):=\det\left(|M|^{-1}\int_M\langle d\phi_i, d\phi_j \rangle dV_{g}\right)_{ij}\geq\tau>0;
              \]
        \item For $i=1,2,\dots,n$, we have
              \[
              |M|^{-1}\int_{M}|\nabla d\phi_i|^2 dV_{g}\leq \delta.
              \]
    \end{enumerate}
\end{definition}
We now state and prove the stability result.
\begin{theorem}
    For $n\in\mathbb{N}$, $\sigma>0$, $D<\infty$ and $K<\infty$,
  let $\mathcal F = \mathcal{F}(n,\sigma, D,K)$ denote the family of Riemannian metrics on $\mathbb{T}^{n}$ such that
  \begin{enumerate}[(a)]
    \item $\mathrm{stabsys}_{1}(g)\geq\sigma$;
    \item $\mathrm{diam}(g)\leq D$;
    \item $\mathrm{Ric}_{g}\geq-Kg$.
  \end{enumerate}
  Then, for any sequence $g_{i}\in \mathcal{F}$ such that $\|R^{-}_{g_{i}}\|_{L^{2}}\rightarrow0$, there is a subsequence
  $g_{i_{j}}$ and a flat metric $g_{\infty}$ such that
  \begin{equation}
    \lim_{j\rightarrow \infty }d_{\mathrm{mGH}}\left(
      \left(\mathbb{T}^{n},d_{g_{i_{j}}},\mathrm{vol}_{g_{i_{j}}}\right),
      \left(\mathbb{T}^{n},d_{g_{\infty }},\mathrm{vol}_{g_{\infty }}\right)
    \right)=0.
  \end{equation}
\end{theorem}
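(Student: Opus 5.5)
The plan is to verify that the harmonic maps of Corollary~\ref{cor:harmonic_small_hess} are $(\delta_i;C,\tau)$-harmonic in the sense of Definition~\ref{def:good_harmonics}, with $C,\tau$ uniform over $\F$ and $\delta_i\to0$, and then invoke \cite[Theorem 1.4]{Honda-Ketterer-Christian-Mondello-Perales-Rigoni}. First note that the hypothesis $\|\Scalminus_{g_i}\|_{L^2}\to0$ implies $\|\Scalminus_{g_i}\|_{L^1}\to0$ by Cauchy--Schwarz, since $\vol_{g_i}(\T^n)\le V_{\mathcal F}$ by \eqref{eq:uniform-volume-bounds}. (Alternatively, $\Scalminus\le nK$ gives the reverse implication, so the two hypotheses are equivalent.) For each $g_i$, take the degree-one harmonic map $\U_i=(u^i_1,\dots,u^i_n)$ of Corollary~\ref{cor:harmonic_small_hess}.

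\textbf{Conditions (1), (2), (4).} Condition (1) holds by construction. For (2), the bound $\|d\U_i\|_{L^\infty}\le C_{\mathcal F}$ gives $E(\U_i)\le \tfrac12 C_{\mathcal F}^2$. For (4), each coordinate satisfies
\[
  |M|^{-1}\int|\nabla du^i_j|^2\le v_{\mathcal F}^{-1}C_{\mathcal F}\|\Scalminus_{g_i}\|_{L^1}=:\delta_i\to0,
\]
using the lower volume bound.

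\textbf{Condition (3) is the main obstacle.} We need the Gram matrix $G_{jl}=|M|^{-1}\int\langle du_j,du_l\rangle$ to have determinant bounded below uniformly. I would use the degree. Since $\deg\U=1$,
\[
  \int_{\T^n}du_1\wedge\cdots\wedge du_n = 1 .
\]
Pointwise, $|du_1\wedge\cdots\wedge du_n|\le \det(\langle du_j,du_l\rangle)^{1/2}$ (Hadamard/Gram). This only gives control of the average of the pointwise Gram determinant, not the determinant of the averaged Gram matrix, so I would combine it with the smallness of the Hessian. Poincar\'e (Theorem~\ref{thm:Poincare_lower_bound-Ricci_lower_bound}, via Proposition~\ref{prop:global_Poincare_from_Ricci_diameter_and_volume}) applied to the functions $\langle du_j,du_l\rangle$, whose gradients are bounded by $2|\nabla du|\,|du|$ and hence are small in $L^2$, shows that $\langle du_j,du_l\rangle$ is $L^2$-close to its mean $G_{jl}$, with error $\le C_{\mathcal F}\|\Scalminus\|_{L^1}^{1/2}$. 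Since all entries are uniformly bounded in $L^\infty$, the pointwise determinant is then $L^1$-close to $\det G$. This gives
\[
  1\le \int\det(\langle du_j,du_l\rangle)^{1/2}\le V_{\mathcal F}\bigl(\det G+o(1)\bigr)^{1/2},
\]
so $\det G\ge \tau:=\tfrac12 V_{\mathcal F}^{-2}$ for all large $i$. If a cleaner route is wanted, one could instead argue by contradiction with a compactness argument, but the estimate above seems direct.

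\textbf{Applying the theorem.} With $(\delta_i;C,\tau)$-harmonic maps and $\delta_i\to0$, \cite[Theorem 1.4]{Honda-Ketterer-Christian-Mondello-Perales-Rigoni} yields $\varepsilon_i\to0$ such that $(\T^n,d_{g_i},\vol_{g_i})$ is $\varepsilon_i$-mGH close to some flat torus, possibly after normalizing the measure; I would check the paper's normalization and rescale by the bounded volumes if needed. The flat tori obtained have diameter $\le D+1$ and volume in $[v_{\mathcal F}/2,2V_{\mathcal F}]$, so by Mahler compactness their lattices subconverge. Passing to a subsequence, the flat tori converge to a flat $g_\infty$, and the triangle inequality for $d_{\mathrm{mGH}}$ concludes the proof.
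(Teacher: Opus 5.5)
Your proposal is correct. It shares the paper's skeleton: H\"older to pass from $L^2$ to $L^1$, then Corollary~\ref{cor:harmonic_small_hess} for conditions (1), (2), (4), then \cite[Theorem~1.4]{Honda-Ketterer-Christian-Mondello-Perales-Rigoni}. You differ from the paper in how you verify the nondegeneracy condition (3) and in the final compactness step.

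\textbf{Condition (3).} The paper never uses the Hessian here. For a unit vector $c$ it rotates the coframe by $Q\in\mathrm{SO}(n)$ with first row $c$. Degree one then gives $1=\bigl|\int\theta^1\wedge\cdots\wedge\theta^n\bigr|\le C_{\mathcal F}^{n-1}\vol_g(\T^n)^{1/2}\|\theta^1\|_{L^2}$, hence $c^{T}Ac\ge C_{\mathcal F}^{-2(n-1)}V_{\mathcal F}^{-2}$. This holds for every $g\in\F$ with no smallness of $\Scalminus$, and it bounds the smallest eigenvalue of the averaged Gram matrix, not just its determinant.

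Your argument is also sound. It combines the pointwise identity $|du_1\wedge\cdots\wedge du_n|^2=\det(\langle du_j,du_l\rangle)$ with Poincar\'e applied to $\langle du_j,du_l\rangle$, whose differential is $O(\|\Scalminus\|_{L^1}^{1/2})$ in $L^2$. The $L^1$ Poincar\'e inequality of Theorem~\ref{thm:Poincare_lower_bound-Ricci_lower_bound} already suffices, since you only need $L^1$-closeness of the pointwise determinant to $\det G$. Your route uses more input and works only for large $i$. In exchange it gives the essentially sharp $\tau=\tfrac12V_{\mathcal F}^{-2}$ (for a flat torus $\det G=\vol^{-2}$ exactly), whereas the paper's $\tau$ is crude.

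\textbf{Final compactness.} The paper uses \cite[Theorem~1.4(1c)]{Honda-Ketterer-Christian-Mondello-Perales-Rigoni} to get affine diffeomorphisms with uniformly bounded bi-Lipschitz constants. All approximating flat metrics then live on the fixed torus with $C^{-1}g_0\le h_i\le Cg_0$. Your Mahler-compactness argument from diameter and volume bounds avoids (1c) and works equally well.

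You should, however, make one point explicit rather than leaving it to ``rescaling''. The volume window $[v_{\mathcal F}/2,2V_{\mathcal F}]$ for the flat tori comes from Colding's volume convergence for noncollapsed sequences with $\Ric\ge-K$, as does identifying the limit measure as $\vol_{g_\infty}$ rather than a constant multiple of it. This is exactly how the paper upgrades GH to mGH convergence. The only exception is if the cited theorem already gives closeness with unnormalized volume measures. Rescaling by bounded total volumes alone fixes the limit measure only up to a constant factor.
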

\begin{proof}
  Write
  \[
    \varepsilon_i:=\|\Scalminus_{g_i}\|_{L^1(g_i)}.
  \]
  The uniform upper volume bound in \eqref{eq:uniform-volume-bounds} and
  H\"older's inequality give
  \[
    \varepsilon_i
    \leq V_{\mathcal F}^{1/2}
       \|\Scalminus_{g_i}\|_{L^2(g_i)}
    \longrightarrow0.
  \]

  Recall that $g_{0}$ denotes the standard flat metric on $\T^n$.
  By Corollary~\ref{cor:harmonic_small_hess}, for every $i$ there is a
  degree-one harmonic map
  \[
    \mathbb U_i=(u_i^1,\dots,u_i^n):
    (\T^n,g_i)\longrightarrow(\T^n,g_0)
  \]
  such that
  \begin{equation}\label{eq:Ui-uniform-bounds}
    \|d\mathbb U_i\|_{L^\infty(g_i)}\leq C_{\mathcal F}
    \textup{ and }
    \|\nabla d\mathbb U_i\|_{L^2(g_i)}^2
    \leq C_{\mathcal F}\varepsilon_i.
  \end{equation}
  In particular, the lower volume bound in \eqref{eq:uniform-volume-bounds}
  implies that
  \[
    E(\mathbb U_i)
    =\frac12 \mathrm{Vol}_{g_i}^{-1} \int_{\T^n}|d\mathbb U_i|^2\,dV_{g_i}
    \leq\frac{(C_{\mathcal F})^2}{2 v_{\mathcal F}}
  \]
  and, for each $\alpha=1,\dots,n$,
  \begin{equation}\label{eq:Ui-averaged-hessian}
    \mathrm{Vol}_{g_i}^{-1} \int_{\T^n}|\nabla du_i^\alpha|^2\,dV_{g_i}
    \leq \frac{C_{\mathcal F}}{v_{\mathcal F}}\varepsilon_i
    =:\delta_i,
  \end{equation}
  where $\delta_i\to0$.

  It remains to establish a uniform lower bound for
  $D(\mathbb U_i)$. Set
  \[
    A_i:=\left(
     \mathrm{Vol}_{g_i}^{-1} \int_{\T^n}
      \langle du_i^\alpha,du_i^\beta\rangle\,dV_{g_i}
    \right)_{\alpha,\beta=1}^n.
  \]
  Fix $c\in\R^n$ with $|c|=1$, choose $Q\in\mathrm{SO}(n)$ whose
  first row is $c$, and define
  \[
    \theta_i^\alpha:=\sum_{\beta=1}^nQ_{\alpha\beta}\,du_i^\beta,
    \qquad \alpha=1,\dots,n.
  \]
  Since $\deg(\mathbb U_i)=1$, the standard flat torus has volume one,
  and $\det Q=1$, we have 
  \[
    1
    =\left|\int_{\T^n}
      \theta_i^1\wedge\cdots\wedge\theta_i^n\right|.
  \]
  Also, $\|\theta_i^\alpha\|\leq\|d\mathbb U_i\|_{L^\infty}\leq C_{\mathcal F}$. Hence
  \begin{align*}
    1
    &\leq
      \int_{\T^n}\|\theta_i^1\|\cdots\|\theta_i^n\|\,dV_{g_i} \\
    &\leq
      (C_{\mathcal F})^{n-1}\operatorname{Vol}_{g_i}(\T^n)^{1/2}
      \|\theta_i^1\|_{L^2(g_i)}.
  \end{align*}
  Consequently,
  \[
    c^TA_ic
    =\mathrm{Vol}_{g_i}^{-1}\int_{\T^n}|\theta_i^1|^2\,dV_{g_i}
    \geq
    \frac{1}{(C_{\mathcal F})^{2(n-1)}\operatorname{Vol}_{g_i}(\T^n)^2}
    \geq
    \frac{1}{(C_{\mathcal F})^{2(n-1)}V_{\mathcal F}^2}
    =:\lambda_0>0.
  \]
  Since this holds for every unit vector $c$, we have
  $A_i\geq\lambda_0 I$, and therefore
  \begin{equation}\label{eq:Ui-nondegenerate}
    D(\mathbb U_i)=\det A_i\geq\lambda_0^n=:\tau>0.
  \end{equation}

  Equations \eqref{eq:Ui-averaged-hessian} and
  \eqref{eq:Ui-nondegenerate}, together with the energy bound above, show
  that $\mathbb U_i$ is
  $(\delta_i;\frac{(C_{\mathcal F})^2}{2 v_{\mathcal F}},\tau)$-harmonic in the sense of
  Definition~\ref{def:good_harmonics}. Since $\delta_i\to0$,
  \cite[Theorem~1.4(1)]{Honda-Ketterer-Christian-Mondello-Perales-Rigoni}
  provides flat $n$-tori $(T_i,h_i')$ and numbers $\Psi_i\to0$ such
  that
  \[
    d_{\mathrm{GH}}\bigl((\T^n,g_i),(T_i,h_i')\bigr)\leq\Psi_i.
  \]
  For all sufficiently large $i$, the degree-one conclusion in
  \cite[Theorem~1.4(1c)]{Honda-Ketterer-Christian-Mondello-Perales-Rigoni}
  also gives an affine harmonic diffeomorphism
  \[
    H_i:(\T^n,g_0)\longrightarrow(T_i,h_i')
  \]
  whose bi-Lipschitz constant is bounded uniformly in $i$, and for which
  $H_i\circ\mathbb U_i$ is a $\Psi_i$-Gromov--Hausdorff approximation.

  Pull back $h_i'$ by $H_i$ and write
  $h_i:=H_i^*h_i'$. Because $H_i$ is affine, $h_i$ is a
  translation-invariant flat metric on the fixed torus $\T^n$. The uniform
  bi-Lipschitz bound implies
  \[
    C^{-1}g_0\leq h_i\leq Cg_0
  \]
  for a constant independent of $i$. Thus, after passing to a subsequence,
  the corresponding positive-definite coefficient matrices converge to a
  positive-definite matrix, and hence
  \[
    h_i\longrightarrow g_\infty
  \]
  smoothly for some flat metric $g_\infty$ on $\T^n$. It follows that
  \[
    d_{\mathrm{GH}}\bigl((\T^n,g_i),(\T^n,g_\infty)\bigr)
    \longrightarrow0.
  \]

  Finally, the lower volume bound in \eqref{eq:uniform-volume-bounds},
  together with the Ricci lower and diameter upper bounds, makes the sequence
  noncollapsed. The volume convergence theorem for noncollapsed sequences
  with a uniform Ricci lower bound therefore upgrades the preceding
  Gromov--Hausdorff convergence to measured Gromov--Hausdorff convergence
  with the Riemannian volume measures. This proves the theorem.
\end{proof}

\section{Stability of the Positive Mass Theorem for Riemannian spin manifolds with a lower Ricci curvature bound} \label{sec:pmt-stability}
A sticking point for establishing the stability of the Positive Mass Theorem in 
higher dimensions is the lack of a formula resembling that of \cite{Bray-Kazaras-Khuri-Stern}.
This is resolved in \cite{PMT-stab_Kaehler} for K{\"a}hler manifolds.
Instead of working with harmonic functions and Hessians, we will use the mass formula
for spin Riemannian manifolds to work with vector fields and their covariant derivatives.

Let us first briefly recall the relevant definitions, with modifications to fit the $n$-dimensional case.
\begin{definition}[{cf. \cite[Definition 1.1]{Kazaras:2021rfv}}]\label{def:b-tau-asymflat}
A smooth connected Riemannian $n$-manifold $(M,g)$ is
\emph{asymptotically flat with finitely many ends} if there is a compact set
$\mathcal K\subset M$, an integer $N\ge0$, and connected components
\[
 M\setminus\mathcal K
 =\bigsqcup_{\lambda=0}^{N}\mathcal E_\lambda
\]
with diffeomorphisms
\[
 \Phi_\lambda:\mathcal E_\lambda\longrightarrow
 \R^n\setminus\overline B^{\mathbb E}_1(0).
\]
For each $\lambda$,  there are constants $b_\lambda>0$ and
$\tau_\lambda>(n-2)/2$ such that
\begin{equation}
\sum_{|\beta|\le2}
 \bigl|\partial^\beta(g^{(\lambda)}_{ij}-\delta_{ij})\bigr|
 \le b_\lambda |x|^{-\tau_\lambda-|\beta|}
\end{equation}
for the metric
$g^{(\lambda)}:=\Phi_{\lambda*}g$, where $|x|$ denotes the Euclidean norm of $x$, $\beta$ is any multi-index, and $|\beta|$ is its size. In this case, the end $\mathcal E_\lambda$ is said to be $(b_\lambda, \tau_\lambda)$-asymptotically flat.  Furthermore, we require the scalar curvature $R_g$ to be $L^1$-integrable over $M$. The map $\Phi$ is part of the data. 
\end{definition}
We also recall the useful weighted $L^{p}$ spaces.
\begin{definition}[{see \cite[Definition A.20]{lee2021geometric}}]
	Let $(M,g)$ be a complete asymptotically flat manifold, and let $r$ be a smooth positive functions such that
	in each asymptotically flat coordinate chart we have $r=r(x)=|x|$.
	Given any $p\geq1$ and $s\in \R$ we define the weighted Lebesgue space $L^{p}_{s}(M)$ to be the space of
	all functions $u\in L^{p}_{loc}$ with finite weighted norm
	\begin{equation}
		\|u\|_{L^{p}_{s}}=\left(\int_{M}r^{-sp-n}|u|^{p}dV_{g}\right)^{\frac{1}{p}}.
	\end{equation}
	Similarly, we define the weighted Sobolev space $W^{k,p}_{s}(M)$ to be the space of all functions
	$u\in W^{k,p}_{loc}(M)$ with finite norm
	\begin{equation}
		\|u\|_{W^{k,p}_{s}(M)}=\sum_{i=0}^{k}\|\nabla^{i}u\|_{L^{p}_{s-i}(M)}.
	\end{equation}
	In fact, this can be extended to any smooth vector bundle $E\rightarrow M$ with a metric structure.
	In particular, we can define $W^{1,p}_{s}(S(M))$ for the spin bundle in the case that $M$ is a
	spin manifold.
\end{definition}

\begin{theorem}\label{thm:pmt-stability}
	Fix an integer $n\ge3$, constants $b>0$, $\tau>(n-2)/2$ and $K\ge0$,
	and a point
	\[
	\mathbf p\in\R^n\setminus\overline B^{\mathbb E}_1(0).
	\]
	Let $(M_i,g_i)$ be a sequence of $n$ dimensional, connected, complete, asymptotically flat
  spin manifolds without boundary, each having finitely many ends
  $\mathcal E_{i,0},\ldots,\mathcal E_{i,N_i}$.  Suppose the distinguished end
  $\mathcal E_{i,0}$ has a chart $\Phi_i:=\Phi_{i,0}$ which is
  $(b,\tau)$-controlled.
  Assume
	\begin{equation}\label{eq:curvature-assumptions}
		\Scal_{g_i}\ge0,
		\qquad
		\Ric_{g_i}\ge-K,
	\end{equation}
	and set $p_i:=\Phi_i^{-1}(\mathbf p)$.  If the ADM mass of the distinguished end $\mathcal E_{i, 0}$  converges to zero,
	then 	$(M_i,d_{g_i},p_i)$ converges to the Euclidean space $(\R^n,d_{\mathbb E},0)$ in the pointed Gromov-Hausdorff sense.
\end{theorem}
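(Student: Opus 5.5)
Our plan is to use Witten's spinorial proof of the positive mass theorem to produce, on each $(M_i,g_i)$, $n$ parallel-at-infinity vector fields whose covariant derivatives are small in $L^2$. We then pass to a pointed measured Gromov--Hausdorff limit, which is an $\RCD(-K,n)$ space, and show that the limit splits off $\R^n$.

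\textbf{Step 1: Witten spinors and the mass formula.} Fix an orthonormal basis $\{\psi_0^a\}$ of constant spinors on $\R^n$, transplanted to $\mathcal E_{i,0}$ via $\Phi_i$ and cut off. By the standard weighted-space theory (using the $(b,\tau)$ control and $\Scal_{g_i}\ge0$), there are harmonic spinors $\psi^a_i$ with $\psi^a_i-\psi^a_0\in W^{1,2}_{-\tau}$ on the distinguished end, decaying on the other ends. Witten's identity gives
\[
\int_{M_i}\Bigl(|\nabla\psi^a_i|^2+\tfrac14\Scal_{g_i}|\psi^a_i|^2\Bigr)dV_{g_i}=c_n\, m_{\mathrm{ADM}}(g_i),
\]
so $\|\nabla\psi^a_i\|_{L^2}\to0$. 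We form the vector fields $X^{ab}_i$ defined by $\langle X,e_j\rangle=\mathrm{Re}\langle e_j\cdot\psi^a_i,\psi^b_i\rangle$. Choosing pairs $(a,b)$ appropriately gives, at infinity, the coordinate fields $\partial_1,\dots,\partial_n$ (this is the standard Clifford-algebra fact that bilinears of constant spinors span the constant vectors). The Leibniz rule gives $|\nabla X|\le 2|\psi^a||\nabla\psi^b|+2|\psi^b||\nabla\psi^a|$. So we need $L^\infty$ bounds on $|\psi^a_i|$.

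\textbf{Step 2: uniform $L^\infty$ bounds.} The Lichnerowicz formula gives $\Delta|\psi|^2\ge 2|\nabla\psi|^2\ge 0$ wherever $\Scal\ge0$. Hence $|\psi^a_i|^2$ is subharmonic, tends to $1$ on the distinguished end, and tends to $0$ on the other ends. The maximum principle then gives $|\psi^a_i|\le1$. Consequently $\|\nabla X^{ab}_i\|_{L^2(M_i)}\to0$ and $|X^{ab}_i|\le1$.

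\textbf{Step 3: passing to the limit.} By Gromov precompactness, after a subsequence $(M_i,d_{g_i},\mathrm{vol},p_i)$ converges to an $\RCD(-K,n)$ space $(Y,d,\mathfrak m,y)$. Using the Honda-type $L^2$/$H^{1,2}$ convergence theory for vector fields (or, more robustly, for the functions $f^j_i$ solving $df^j_i=(X^j_i)^\flat+$ error), we obtain limit fields $X^j$ on $Y$ with $\nabla X^j=0$. These fields are $\mathfrak m$-a.e. orthonormal, because the Gram matrix $\langle X^j,X^k\rangle$ has gradient small in $L^2$ and equals $\delta_{jk}$ near infinity. The main obstacle is here. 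First, the $X^j_i$ are not gradients, since $dX^\flat$ is only small in $L^2$, so we must either integrate them to harmonic functions with small Hessian or use the parallel-vector-field splitting theorem for $\RCD$ spaces directly. Second, $L^2$ smallness over all of $M_i$ must be converted into control on balls $B(p_i,R)$ in the limit; here local volume doubling under $\Ric\ge-K$ is exactly what is needed. We would first try the approach of solving $\Delta f^j_i=\operatorname{div}X^j_i$ with $f^j_i\sim x^j$ at infinity. This yields harmonic $f^j_i$ with $\|\Hess f^j_i\|_{L^2}\lesssim\|\nabla X^j_i\|_{L^2}\to0$, and it mirrors the approach of Kazaras--Khuri--Lee. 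The limits $f^j$ are then harmonic with vanishing Hessian and orthonormal gradients, so the splitting theorem gives $Y\cong\R^n\times Z$. Since $Y$ has essential dimension at most $n$, $Z$ is a point.

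\textbf{Step 4: identifying the base point.} Finally, $p_i=\Phi_i^{-1}(\mathbf p)$ lies at a definite location in the uniformly controlled chart. The maps $(f^1_i,\dots,f^n_i)$ therefore converge to an isometry $Y\to\R^n$, and this isometry sends $y$ to a fixed point, which we translate to $0$. Since every subsequence has a further subsequence converging to $(\R^n,d_{\mathbb E},0)$, the whole sequence converges.
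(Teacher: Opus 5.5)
Your Steps 1--2 are essentially the paper's argument (\cref{lem:spinor_properties}, \cref{prop:almost-parallel-vector-fields}). Like the paper, you use Witten spinors with zero data on the other ends, Dirac-current bilinears, the maximum principle for $|\psi|^2$, and the mass formula. Passing the vector fields to the limit with Honda's theory is what \cref{prop:vector-Rellich} and \cref{prop:limit-frame} do. The gap is in Step 3, where you identify the limit as $\R^n$.

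\textbf{The Hessian estimate is unsupported.} Your main route needs $\|\Hess f^j_i\|_{L^2(B_R(p_i))}\to0$ for asymptotically linear harmonic $f^j_i$, and you give no justification for $\|\Hess f^j_i\|_{L^2}\lesssim\|\nabla X^j_i\|_{L^2}$. As a side point, $f^j_i$ is harmonic only because Dirac currents of harmonic spinors are divergence-free, $\diver X=\mathrm{Re}(\langle D\psi^a,\psi^b\rangle-\langle\psi^a,D\psi^b\rangle)=0$, and you should say so. Set $\beta=df^j_i-(X^j_i)^\flat$. Then $\beta$ is co-closed and $d\beta=-d(X^j_i)^\flat$ is small in $L^2$, but the integrated Bochner formula gives
\[
\int|\nabla\beta|^2=\int|d(X^j_i)^\flat|^2-\int\Ric(\beta,\beta).
\]
With only $\Ric\ge-K$, the last term is bounded by $K\|\beta\|_{L^2}^2$. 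That is not small, and since $\beta$ in general only decays like $r^{-\tau}$, it need not even be finite when $\tau\le n/2$. Local elliptic estimates for $d+\delta$ likewise leave a non-small lower-order term in $\|\beta\|_{L^2(B_{2R})}$. Knowing that $\beta$ is small amounts to knowing that $X^j_i$ is almost a gradient on large balls, which is essentially what you are trying to prove. This is exactly the obstruction flagged at the start of \cref{sec:pmt-stability}. Kazaras--Khuri--Lee get Hessian control from the Bray--Kazaras--Khuri--Stern formula for harmonic functions, not from vector fields, and no such formula is available in higher dimensions.

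\textbf{The fallback does not close the gap.} A splitting theorem for parallel vector fields cannot give $Y\cong\R^n\times Z$ on its own. A parallel orthonormal frame is equally compatible with $\R^{n-1}\times\Sp^1$ or any flat quotient, so a global input is needed. Your proposal never uses the $(b,\tau)$-controlled end for that purpose, only to place the base point.

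\textbf{How the paper avoids harmonic functions.} It keeps the limit fields $W^a$, which are parallel, a.e.\ orthonormal, and also divergence-free, since $|\diver W^a_i|\le\sqrt n\,|\nabla W^a_i|$. It then checks $\operatorname{BE}(0,n)$ directly. Writing $\nabla f=\sum_a u_aW^a$ with $u_a=\langle\nabla f,W^a\rangle$, one gets $\nabla u_a=\Hess f(W^a,\cdot)$ and $\Delta u_a=\langle\nabla\Delta f,W^a\rangle$. Hence $\frac12\Delta|\nabla f|^2-\langle\nabla f,\nabla\Delta f\rangle=\sum_a|\nabla u_a|^2\ge\frac1n(\Delta f)^2$, so the limit is $\RCD(0,n)$ (\cref{prop:RCD0n}). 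The global input is then volume. The uniformly controlled chart, together with volume convergence for noncollapsed limits, gives $\limsup_{r\to\infty}\cH^n(B_r(p))/(\omega_nr^n)\ge1$ (\cref{lem:end-maximal-volume}). Bishop--Gromov rigidity for noncollapsed $\RCD(0,n)$ spaces then forces the limit to be $\R^n$ (\cref{lem:maximal-volume-flat}). You should also record noncollapsing, which the chart provides and which is needed both for Honda's convergence results and for the limit measure to be $\cH^n$.
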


To prove the theorem, we combine Witten’s spinorial approach to the positive mass theorem with techniques from the theory of RCD spaces. To streamline the argument, we divide the proof into a sequence of lemmas and propositions. Some of these are fairly standard adaptations of techniques from the theory of RCD spaces to the present setting. For the reader’s convenience, we provide brief justifications, while referring to the cited sources for full details.

In the following, unless otherwise specified, for a connected, complete, spin $n$-dimensional manifold $(M, g)$  without boundary, with the
finitely many asymptotically flat ends
$\mathcal E_0,\ldots,\mathcal E_N$ as in \cref{def:b-tau-asymflat}, we denote by $D$ its associated Dirac operator and $ m_{\mathrm{ADM}}(\mathcal E_\lambda, g)$ the ADM mass of the end $\mathcal E_\lambda$. We first construct $n$ almost parallel and almost linearly independent vector fields on $(M, g)$ in \cref{lem:spinor_properties,prop:almost-parallel-vector-fields,lem:Gram-control}. 

 \begin{lemma}\label{lem:spinor_properties}
 	Let $(M,g)$ be connected, complete, spin, and without boundary, with the
finitely many asymptotically flat ends
$\mathcal E_0,\ldots,\mathcal E_N$ from \cref{def:b-tau-asymflat}.  Let $\sigma_\lambda$ be a
constant spinor at infinity of the end $\mathcal E_\lambda$.  If
$\Scal_g\ge0$, then there is a unique harmonic spinor $\psi$ (that is, $D\psi = 0$) satisfying
\begin{enumerate}[(a)]
 		\item $\psi = \sigma_\lambda$ at infinity of each end $\mathcal E_\lambda$;
        \item $\|\psi\|_{L^{\infty }}\leq \max_{0\le\lambda\le N}|\sigma_\lambda|$;
 		\item and
        \[ \int_{M}|\nabla \psi|^{2}+\frac{1}{4}\Scal|\psi|^{2}=c_n\sum_{\lambda=0}^{N}
 m_{\mathrm{ADM}}(\mathcal E_\lambda,g)|\sigma_\lambda|^2.
 \]
 	\end{enumerate}
    where $c_n>0$ only depends on the dimension $n$. 
 \end{lemma}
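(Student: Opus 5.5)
This is Witten's argument; I will follow Parker--Taubes and Lee's book \cite{lee2021geometric}. Fix a smooth spinor $\sigma$ on $M$ that equals the constant spinor $\sigma_\lambda$ (in the chart $\Phi_\lambda$) outside a large compact set on each end $\mathcal E_\lambda$. We look for $\psi=\sigma+\xi$ with $\xi\in W^{1,2}_{-q}$ for some $q$ with $(n-2)/2<q<\min(\tau,n-2)$. Since $\sigma$ is constant near infinity, $D\sigma$ is built from Christoffel symbols. Hence $D\sigma=O(|x|^{-\tau-1})$, which lies in $L^2_{-q-1}$. So we must solve $D\xi=-D\sigma$.

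\textbf{Existence.} I would solve $D^2\xi=-D D\sigma$ through the Lichnerowicz formula $D^2=\nabla^*\nabla+\frac14\Scal$. For $\Scal\ge0$, the weighted Hardy/Poincaré inequality on asymptotically flat manifolds (valid for $q$ in this range) gives
\[
\|\xi\|_{W^{1,2}_{-q}}^2\le C\int_M|\nabla\xi|^2\le C\int_M\bigl(|\nabla\xi|^2+\tfrac14\Scal|\xi|^2\bigr)=C\|D\xi\|_{L^2}^2 .
\]
Lax--Milgram then produces a weak solution $\xi$ of $D^*D\xi=-D^*D\sigma$. The same identity shows that $D$ is injective on $W^{1,2}_{-q}$. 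In fact, I would apply Lax--Milgram to $D\colon W^{1,2}_{-q}\to L^2$: coercivity gives closed range, and I would show surjectivity onto the relevant space by duality. Concretely, if $\eta\in L^2$ is orthogonal to $D(W^{1,2}_{-q})$, then $D\eta=0$ weakly, elliptic regularity plus the decay estimate place $\eta$ in the same class, and injectivity forces $\eta=0$. This gives $D\psi=0$. Elliptic regularity makes $\psi$ smooth, and the weighted estimates give $\psi-\sigma_\lambda=O(|x|^{-q})$ on each end, which is (a). For uniqueness, the difference of two solutions lies in $W^{1,2}_{-q}$ and is harmonic, so it vanishes by the injectivity just shown.

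\textbf{The $L^\infty$ bound (b).} The Weitzenböck identity gives, for $D\psi=0$,
\[
\tfrac12\Delta|\psi|^2=|\nabla\psi|^2+\tfrac14\Scal|\psi|^2\ge0 .
\]
So $|\psi|^2$ is subharmonic. It tends to $|\sigma_\lambda|^2$ at infinity of each end, and $M$ is compact outside the ends. The maximum principle, applied on exhaustions by large compact sets whose boundary values approach the $|\sigma_\lambda|^2$, then yields $|\psi|\le\max_\lambda|\sigma_\lambda|$.

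\textbf{The mass identity (c).} Integrate the identity $\langle\nabla^*\nabla\psi,\psi\rangle+\frac14\Scal|\psi|^2=0$ over a large region $\Omega_R$ bounded by coordinate spheres $S_R$ in each end. Because $D\psi=0$, one can use the Witten boundary form $\langle\nabla_\nu\psi+\nu\cdot D\psi,\psi\rangle$. This gives
\[
\int_{\Omega_R}\bigl(|\nabla\psi|^2+\tfrac14\Scal|\psi|^2\bigr)=\sum_\lambda\int_{S_R^{(\lambda)}}\langle\nabla_\nu\psi+\nu\cdot D\psi,\psi\rangle .
\]
Write $\psi=\sigma_\lambda+O(r^{-q})$ with $\nabla\psi=\nabla\sigma_\lambda+O(r^{-q-1})$. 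The standard computation shows that the boundary term with $\sigma_\lambda$ converges to $c_n\, m_{\mathrm{ADM}}(\mathcal E_\lambda)|\sigma_\lambda|^2$: it is $\frac14\int(\partial_jg_{ij}-\partial_ig_{jj})\nu^i|\sigma_\lambda|^2$ up to normalization. The cross and error terms are $O(R^{n-1}R^{-\tau-1-q})\to0$ and $O(R^{n-2-2q})\to0$, since $\tau+q>n-2$ and $2q>n-2$.

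The main obstacle is the existence step: setting up the weighted-space isomorphism of $D$ and the decay $\psi-\sigma_\lambda=O(r^{-q})$ with enough derivative control to kill the error terms in (c). For this I would cite the weighted elliptic theory (Bartnik, and \cite{lee2021geometric}) rather than redo it.
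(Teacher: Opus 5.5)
Your route is the paper's. Both rest on Witten's method as carried out by Parker--Taubes and in Lee's book: existence, uniqueness and decay from weighted elliptic theory, and the boundary computation for (c). Your proof of (b), subharmonicity of $|\psi|^2$ from the Weitzenb\"ock identity plus the maximum principle on an exhaustion, is exactly the paper's. Two steps of your sketch, however, fail as written.

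\textbf{The coercivity estimate.} The estimate $\|\xi\|^2_{W^{1,2}_{-q}}\le C\int_M|\nabla\xi|^2$ is false for every $q>(n-2)/2$. Take $\xi_R=\chi(|x|/R)\,\sigma_\lambda$ with $\chi$ supported in $[1,2]$. Then $\int|\nabla\xi_R|^2\sim R^{n-2}$, while already $\|\xi_R\|^2_{L^2_{-q}}=\int r^{2q-n}|\xi_R|^2\sim R^{2q}$. The derivative part $\int r^{2q+2-n}|\nabla\xi|^2$ also carries a positive power of $r$. Since $|D\xi|\le\sqrt n\,|\nabla\xi|$, the bound $\|\xi\|_{W^{1,2}_{-q}}\le C\|D\xi\|_{L^2}$ fails too. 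So the Lax--Milgram/closed-range argument does not work in $W^{1,2}_{-q}$ as you set it up.

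Hardy's inequality gives your estimate only for $q\le(n-2)/2$, i.e.\ on the energy space (the completion of compactly supported spinors in $\|\nabla\cdot\|_{L^2}$). The correct order is:
\begin{enumerate}
\item solve there, which is possible because $D\sigma=O(r^{-\tau-1})\in L^2$ for $\tau>(n-2)/2$;
\item use your duality argument to get $D\psi=0$;
\item only then upgrade to $\psi-\sigma_\lambda=O(r^{-q})$ and $\nabla(\psi-\sigma_\lambda)=O(r^{-q-1})$ via weighted elliptic estimates for $D\xi=-D\sigma$.
\end{enumerate}
That last upgrade is exactly what the paper imports from Parker--Taubes and Choquet-Bruhat--Christodoulou.

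\textbf{The cross term in (c).} One of your cross terms is mis-estimated. The term $\langle\nabla_\nu\sigma_\lambda+\nu\cdot D\sigma_\lambda,\xi\rangle$ is indeed $O(r^{-\tau-1-q})$. But $\langle\nabla_\nu\xi+\nu\cdot D\xi,\sigma_\lambda\rangle$ is pointwise only $O(r^{-q-1})$, which integrates to $O(R^{n-2-q})$. This does not tend to $0$, since $q<n-2$. Nor can $q$ be raised past $n-2$: for Schwarzschild, $\xi$ is to leading order a nonzero multiple of $m\,r^{2-n}\sigma_\lambda$.

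The missing observation is that, with $e_1,\dots,e_{n-1}$ an orthonormal frame of $TS_R$, one has $\nabla_\nu+\nu\cdot D=\sum_a\nu\cdot e_a\cdot\nabla_{e_a}$, which involves only derivatives tangent to $S_R$. Integrate by parts on the closed sphere; the second fundamental form terms cancel exactly. This gives
$\int_{S_R}\langle\nabla_\nu\xi+\nu\cdot D\xi,\sigma_\lambda\rangle=-\sum_a\int_{S_R}\langle\xi,e_a\cdot\nu\cdot\nabla_{e_a}\sigma_\lambda\rangle$.
Since $|\nabla\sigma_\lambda|=O(r^{-\tau-1})$, this is $O(R^{n-2-\tau-q})\to0$.

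\textbf{A smaller point on uniqueness.} Condition (a) only prescribes $\psi\to\sigma_\lambda$, so your uniqueness argument, which assumes the difference lies in $W^{1,2}_{-q}$, needs an extra decay step. It is simpler to note that the difference of two solutions is a harmonic spinor. Its squared norm is subharmonic and tends to $0$ on every end, so it vanishes by the same maximum principle you use for (b).
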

 \begin{proof}
 	This result is essentially \cite[Theorem 4.1 and Theorem 4.2]{Parker-Taubes_Wittens_proof},
 	however the authors only state the result for dimension 3. Furthermore, the lemma above
  is a special case where, following their notation,
  we are assuming that the tensor $h$ is identically zero.
  They show that a unique harmonic tensor $\psi$ exists and has the property that for any
  $p>n$ we have $\psi-\sigma_{\lambda}\in W^{1,p}_{\tau}$ for each end $\mathcal{E}_{\lambda}$.
  With this in hand, one can use Kato's inequality together with a weighted Sobolev embedding
  theorem, see \cite[Theorem A.25]{lee2021geometric}, to conclude that
  for each end we have $\lim_{|x|\to\infty}|\psi|=|\sigma_{\lambda}|$.
  Finally, it follows from the Bochner formula and the scalar curvature lower bound that
  $|\psi|^{2}$ is a subsolution, and so the maximum principle applies.

  This argument extends to higher dimensions after making the following observations.
  First, the estimates on $\psi$ given in \cite{Parker-Taubes_Wittens_proof} are split 
  into two pieces: an interior and an asymptotic piece. Second,
  the estimates for the interior piece remain unchanged, while the correct estimates for the
  exterior piece follow from the results in \cite{Choquet-Bruhat-Christodoulou-81}.
  Crucially the argument that $D$ is injective in the appropriate weighted space
  also extends to higher dimensions.
  Together, this gives the required estimates.
  Finally, for the existence of the harmonic spinor and the validity of the mass formula,
  one can look to \cite[Chapter 5]{lee2021geometric}.
 \end{proof}
\begin{remark}
    A similar result is established in \cite[Section 3]{ShiTam} in detail
    for lower regularity metrics and with the falloff $\tau=n-2$.
    In this case, $\psi-\sigma_{\lambda}$ has a slightly different
    decay rate, but it is still more than sufficient to conclude that
    $\lim_{|x|\to\infty}|\psi|=|\sigma_{\lambda}|$.
\end{remark} 

\begin{proposition}\label{prop:almost-parallel-vector-fields}
	Under the same hypotheses as \cref{lem:spinor_properties},
    let $e_1, \ldots, e_n$ be coordinate vectors at infinity of the distinguished end $\mathcal E_0$.
  Then there are smooth vector fields $W^1,\ldots,W^n$ on $M$ such that
	\begin{enumerate}[$(1)$]
		\item 	$W^a  = e_a$ at infinity of $\mathcal E_0$,
        \item $W^a  = 0$ at infinity of $\mathcal E_\lambda$ for each $\lambda >0$,
		\item $|W^a| \le1$,
		\item and 
		\[ \int_M|\nabla W^a|^2dV_g \le 4c_n \cdot m_{\mathrm{ADM}}(\mathcal E_0, g). \]
	\end{enumerate}
\end{proposition}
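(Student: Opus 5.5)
The plan is to build the vector fields $W^a$ as bilinear expressions in Witten spinors, using the standard Clifford-algebra identity that recovers a vector from spinors. Choose an orthonormal basis of constant spinors at infinity of $\mathcal E_0$, and for each pair $(\sigma,\sigma')$ apply \cref{lem:spinor_properties} with $\sigma_0=\sigma$ or $\sigma'$ on $\mathcal E_0$ and $\sigma_\lambda=0$ on the other ends $\lambda>0$. This gives harmonic spinors $\psi,\psi'$ with $\psi\to\sigma$, $\psi'\to\sigma'$ on $\mathcal E_0$ and $\psi,\psi'\to0$ on the other ends. Since only the $\lambda=0$ term survives on the right of the mass formula, part (c) yields
\[
\int_M|\nabla\psi|^2\le c_n\,m_{\mathrm{ADM}}(\mathcal E_0,g)|\sigma|^2 .
\]
Here we use $\Scal\ge0$ to drop the curvature term. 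Part (b) gives $|\psi|\le|\sigma|$.

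Next, define the vector field $W$ by duality:
\[
g(W,X)=\operatorname{Re}\langle X\cdot\psi,\psi'\rangle \quad\text{(or }\operatorname{Im}\text{, depending on sign conventions)},
\]
for unit spinors $\sigma,\sigma'$ chosen so that $\langle e_a\cdot\sigma,\sigma'\rangle=1$ and $\langle e_b\cdot\sigma,\sigma'\rangle=0$ for $b\neq a$. This is possible because Clifford multiplication by $e_a$ is unitary. Concretely, take $\sigma'=e_a\cdot\sigma$ up to a sign or factor of $i$. Then one checks $\operatorname{Re}\langle e_b\cdot\sigma,e_a\cdot\sigma\rangle=\delta_{ab}|\sigma|^2$, which holds since $e_a e_b+e_b e_a=-2\delta_{ab}$ and Clifford multiplication is skew-adjoint. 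Properties (1) and (2) then follow from the asymptotics of $\psi,\psi'$.

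For property (3), Cauchy--Schwarz and the isometry of unit Clifford multiplication give $|g(W,X)|\le|X||\psi||\psi'|\le|X|$, so $|W|\le1$.

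For property (4), Clifford multiplication is parallel, so
\[
g(\nabla_YW,X)=\operatorname{Re}\bigl(\langle X\cdot\nabla_Y\psi,\psi'\rangle+\langle X\cdot\psi,\nabla_Y\psi'\rangle\bigr).
\]
This gives $|\nabla W|\le|\nabla\psi||\psi'|+|\psi||\nabla\psi'|$. Using $(x+y)^2\le2x^2+2y^2$, the sup bounds, and the energy bounds,
\[
\int|\nabla W|^2\le 2c_n m+2c_n m=4c_nm,
\]
which is exactly the constant $4c_n$ in the statement.

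The main obstacle is the algebraic step of choosing $\sigma,\sigma'$ so that the bilinear form is exactly $e_a$ at infinity. I would handle it by taking $\sigma$ a unit spinor and $\sigma'=e_a\cdot\sigma$, and verifying the relation $\operatorname{Re}\langle e_b\cdot\sigma,e_a\cdot\sigma\rangle=\delta_{ab}$. This uses $\langle e_b\sigma,e_a\sigma\rangle=-\langle e_ae_b\sigma,\sigma\rangle$ together with the fact that $e_ae_b$ is skew-adjoint for $a\neq b$, so its expectation is purely imaginary. A secondary point is that uniqueness in \cref{lem:spinor_properties} makes the construction linear in the boundary data, so $\psi'$ is simply the Witten spinor asymptotic to $e_a\cdot\sigma$; no compatibility issues arise. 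Smoothness of $W$ follows from elliptic regularity of $\psi,\psi'$.
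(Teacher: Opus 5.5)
Your proposal is correct and follows essentially the same route as the paper. Both arguments take the harmonic spinors of \cref{lem:spinor_properties} with data on $\mathcal E_0$ and zero on the other ends, define $W^a$ as a Clifford bilinear bounded by $|\psi||\psi'|$, bound $|\nabla W|$ by Leibniz, and conclude with the mass formula using $\Scal\ge0$. The only difference is the choice of bilinear. The paper uses a single spinor, the quadratic current $g(W_\psi,X)=\operatorname{Im}\langle\psi,c(X)\psi\rangle$, and takes $\sigma_a$ to be a $+1$ eigenspinor of $ic(e_a)$, which gives $|\nabla W_\psi|\le2|\psi||\nabla\psi|$. Your polarized current $\operatorname{Re}\langle X\cdot\psi,\psi'\rangle$ with $\sigma'=e_a\cdot\sigma$ uses two spinors per field and gets the same constant $4c_n$.
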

\begin{proof}
	For a spinor $\psi$, define a corresponding vector field  $W_\psi$
	by
	\begin{equation}\label{eq:Dirac-current}
		g(W_\psi,X)=\operatorname{Im}\langle\psi,c(X)\psi\rangle.
	\end{equation}
	where $c(X)$ is the Clifford multiplication by $X$. 
	Clearly, we have 
	\[  |W_\psi|\le |\psi|^2,
	\qquad
	|\nabla W_\psi|\le2|\psi|\,|\nabla\psi|. \]
    Observe that, for a unit vector $v\in \mathbb R^n$,  $i c(v)$ is a
	self-adjoint involution.  Let $\sigma$ be  a unit eigen-spinor of  $ic(v)$ with eigenvalue $1$. Note that, if $w\perp v$, then
	$ic(v)$ anti-commutes with $c(w)$, so
	$c(w)\sigma$ is
	orthogonal to $\sigma$. If we define  $W_{\sigma}$ according to \cref{eq:Dirac-current}, then  $W_{\sigma} = v$. It follows from this discussion that, for each  coordinate vector $e_a$ at infinity of the distinguished end $\mathcal E_0$, there is a  spinor $\sigma_a$ at infinity  such that  $W_{\sigma_a}=e_a$.  Let $\psi_a$ be the corresponding spinor such that $\psi_a = \sigma_a$ at infinity of $\mathcal E_0$ and $0$ at infinith of other ends $\mathcal E_\lambda$ (see \cref{lem:spinor_properties}) and 
  define  $W^a\coloneqq W_{\psi_a}$.
	
Note that 
	\[
	\int_M|\nabla W^a|^2dV_g
	\le4\int_M|\nabla\psi_a|^2dV_g.
	\]
The proposition now follows from \cref{lem:spinor_properties}.
\end{proof}

\begin{lemma}\label{lem:Gram-control}
  Fix $n,b,\tau,K$ and $\mathbf p$ as in \cref{thm:pmt-stability}.
  Let $(M,g)$ be connected, complete,
  spin, and without boundary, with finitely many asymptotically flat ends, and
  suppose the distinguished end $\mathcal E_0$ with chart $\Phi_0$ is
  $(b,\tau)$-asymptotically flat.  Assume
  $\Scal_g\ge0$ and $\Ric_g\ge-K$, set
  $p=\Phi_0^{-1}(\mathbf p)$, and let $W^1,\ldots,W^n$ be the vector fields  from
	\cref{prop:almost-parallel-vector-fields}.
	Then for every $R>0$, we have 
	\begin{equation}\label{eq:Gram-control}
		\int_{B_R(p)}|g(W^a,W^b)-\delta_{ab}|^2dV_g
		\le C_R\cdot m_{\mathrm{ADM}}(M,g)
	\end{equation}
  for some constant $C_R=C(R,n,b,\tau,K,\mathbf p)<\infty$.
\end{lemma}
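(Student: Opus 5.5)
The idea is to use that the functions $f_{ab}:=g(W^a,W^b)-\delta_{ab}$ have small gradient in $L^2$ globally, and tend to $0$ at infinity of the distinguished end. A Poincaré-type inequality on balls, made uniform by the Ricci lower bound, then bounds $f_{ab}$ on $B_R(p)$ in terms of $\|\nabla f_{ab}\|_{L^2}$ together with its average on a reference region far out in $\mathcal E_0$. On that region the geometry is controlled by the $(b,\tau)$ chart.

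\textbf{Gradient bound.} By \cref{prop:almost-parallel-vector-fields} we have $|W^a|\le1$. Hence
\[
|\nabla f_{ab}|\le|\nabla W^a|+|\nabla W^b|,
\qquad\text{so}\qquad
\int_M|\nabla f_{ab}|^2\le 16c_n\, m_{\mathrm{ADM}}(\mathcal E_0,g).
\]
Also $|f_{ab}|\le 2$ everywhere.

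\textbf{Fixing a reference region.} Fix $R_0$ depending only on $b,\tau,n$ so that on $\{|x|\ge R_0\}$ the chart metric is $2$-bi-Lipschitz to Euclidean. Let $q=\Phi_0^{-1}(R_1\mathbf p/|\mathbf p|)$ with $R_1=2R_0+|\mathbf p|$. Then $d_g(p,q)\le L$ for some $L=L(n,b,\tau,\mathbf p)$, using a path in the chart.

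Set $\mathcal B=B_{L+R}(p)$. Volumes are controlled above by Bishop–Gromov with the Ricci bound $-K$. They are controlled below because $\mathcal B$ contains a chart ball of definite Euclidean volume. Apply the local Poincaré inequality \cref{thm:Poincare_lower_bound-Ricci_lower_bound} (with $\nu=1$, upgraded to $L^2$ by the Sobolev bootstrap \cref{prop:global_Poincare_from_Ricci_diameter_and_volume}, or directly via the $L^2$ Neumann–Poincaré inequality of Buser type) on $\mathcal B$. This gives
\[
\int_{\mathcal B}|f_{ab}-\bar f|^2\le C\int_{\mathcal B}|\nabla f_{ab}|^2\le C\,m_{\mathrm{ADM}},
\]
where $\bar f$ is the mean of $f_{ab}$ over $\mathcal B$.

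\textbf{Controlling the mean.} It remains to show $|\bar f|^2\le C\,m_{\mathrm{ADM}}$, for which it suffices to compare with far-out behavior. On the end, use the chart: for $|x|\ge R_0$ consider the Euclidean annuli $A_j=\{2^jR_0\le|x|\le2^{j+1}R_0\}$. In each $A_j$ we have $f_{ab}\to0$ as $j\to\infty$ by property (1) of \cref{prop:almost-parallel-vector-fields}. The Euclidean Poincaré inequality on the uniformly bi-Lipschitz annuli and on overlapping pairs $A_j\cup A_{j+1}$ gives
\[
|\bar f_{A_j}-\bar f_{A_{j+1}}|\le C\,(2^jR_0)^{1-n/2}\|\nabla f_{ab}\|_{L^2(A_j\cup A_{j+1})}.
\]
Since $n\ge3$ the factors $(2^j)^{1-n/2}$ are summable, so telescoping yields $|\bar f_{A_0}|\le C\,m_{\mathrm{ADM}}^{1/2}$. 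This is the Sobolev/Hardy step exploiting $n\ge3$. Finally, $A_0$ overlaps $\mathcal B$ in a set of definite measure (enlarge $\mathcal B$ if needed so that it contains $A_0$). Therefore $|\bar f-\bar f_{A_0}|^2\le C\,m_{\mathrm{ADM}}$ by the same Poincaré estimate restricted to that subset. Combining these bounds and restricting from $\mathcal B$ to $B_R(p)$ gives \eqref{eq:Gram-control}.

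\textbf{Main obstacle.} The hardest step is the mean control, in particular making the Poincaré constant on $\mathcal B$ uniform: the region may see other ends or the core. The Ricci lower bound together with the diameter bound $2(L+R)$ and the volume lower bound from the chart handles this. The only remaining issue is to confirm that the limit of $f_{ab}$ at infinity in the sense of \cref{lem:spinor_properties} justifies $\bar f_{A_j}\to0$. This follows from uniform convergence $\psi_a\to\sigma_a$ on the end.
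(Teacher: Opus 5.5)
Your proof is correct and follows the paper's route. The paper likewise combines $|\nabla(g(W^a,W^b))|\le|\nabla W^a|+|\nabla W^b|$, the energy bound of \cref{prop:almost-parallel-vector-fields}, decay at infinity, and a Poincar\'e inequality; your Ricci-uniform Poincar\'e inequality on $B_{L+R}(p)$, together with the dyadic telescoping along $\mathcal E_0$ (summable since $n\ge3$), spells out what the paper calls ``a standard application of Poincar\'e inequality,'' and correctly uses decay only on the distinguished end, since $g(W^a,W^a)-1\to-1$ on the other ends, where $W^a\to0$.
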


\begin{proof}
	By $(1)$ and $(2)$ of \cref{prop:almost-parallel-vector-fields}, we have $h^{ab}\to0$ at infinity and 
	\[
	|\nabla h^{ab}|
	\le |\nabla W^a|+|\nabla W^b|.
	\]
Now a standard application of Poincar\'e inequality together with $(3)$ of \cref{prop:almost-parallel-vector-fields} implies  \eqref{eq:Gram-control}.
\end{proof}

\subsection{Limits of almost-parallel vector fields}\label{sec:frame-limit}

Applying the preceding construction to each
$(M_i,g_i,\Phi_i)$ in \cref{thm:pmt-stability}, we obtain vector
fields $
W_i^1,\ldots,W_i^n$
that are uniformly bounded and, on every fixed ball,
almost parallel and almost orthonormal; see
\cref{prop:almost-parallel-vector-fields,lem:Gram-control}.  We now use the theory
of noncollapsed Ricci-limit spaces to analyze the behavior of these vector
fields as $i\to\infty$.

 In the following, for a metric space $(X, d)$, let $\cH^n$ denote the $n$-dimensional Hausdorff measure associated to the metric $d$ on $X$. We assume familiarity with $L^2$-tangent module $L^2(TX)$ associated to an $\RCD$ space $(X,d,\mathfrak m)$. We refer the reader to \cite{Gigli_2018} and \cite{gigli2020lectures} for more details. The space $L^2_{\mathrm{loc}}(TX)$ consists of measurable tangent fields
whose squared norm is integrable on every bounded ball. For a vector field
$V\in L^2_{\mathrm{loc}}(TX)$,  we write
$V\in D_{\mathrm{loc}}(\diver)$ if there is a function
$\diver V\in L^2_{\mathrm{loc}}(X)$ such that
\[
\int_X\langle V,\nabla\phi\rangle\,d\mathfrak m
=-\int_X\phi\,\diver V\,d\mathfrak m
\qquad\text{for every }\phi\in\Lip_c(X).
\] 
where $\Lip_c(X)$ denotes the compactly supported
Lipschitz functions. 
If $X$ is an $\RCD(k,n)$ space, then we  denote by $W^{1,2}_{\mathrm{loc}}(TX)$ the space of vector fields
$V\in L^2_{\mathrm{loc}}(TX)$ admitting a weak covariant derivative $
\nabla V\in L^2_{\mathrm{loc}}(T^{\otimes 2}X)$
in the sense of \cite[Definition 3.4.1]{Gigli_2018}.  
\begin{proposition}
	\label{prop:vector-Rellich}
	Let $(N_i,h_i,q_i)$ be complete pointed $n$-dimensional Riemannian
	manifolds satisfying $
	\Ric_{h_i}\ge -K$ and $
	\vol_{h_i}(B_1(q_i))\ge v>0$. Suppose that 	$(N_i,d_{h_i},q_i,dV_{h_i})$ converges to $(X,d,q,\cH^n)$ in the pointed measured Gromov-Hausdorff sense.
	Let $W_i$ be a smooth vector field on $N_i$ such that, for every $R>0$,
	there are constants $L_R,E_R<\infty$, independent of $i$, for which
	\begin{equation}\label{eq:Rellich-bounds}
		\|W_i\|_{L^\infty(B_R(q_i))}\le L_R
		\textup{ and }
		\int_{B_R(q_i)}|\nabla W_i|^2\,dV_{h_i}\le E_R.
	\end{equation}
	Then, after passing to a subsequence, there exists  $
	W\in L^2_{\loc}(TX) $
	such that the following are satisfied. 
	\begin{enumerate}[$(1)$]
		\item $W_i$ locally $L^2$-converges strongly to $W$ in the sense of Honda \cite{Honda_2015}. More precisely, on every ball $B_R(q)\subset X$, we have $W_i|_{B_R(q_i)}$   $L^2$-converges strongly to $W|_{B_R(q)}$ in the sense of Honda \cite{Honda_2015}.
		\item If $0<R<R'$ and $ 
		\sup_i\|W_i\|_{L^\infty(B_{R'}(q_i))}\le L,$
		then $\|W\|_{L^\infty(B_{R}(q))}\le L$. 
		\item $W\in W^{1,2}_{\mathrm{loc}}(TX)$ and 
		$\nabla W_i$ locally $L^2$-converges weakly to $\nabla W$.  Moreover, for every $R>0$
		one has
		\begin{equation}\label{eq:covariant-lsc}
			\int_{B_R(q)}|\nabla W|^2\,d\cH^n
			\le
			\liminf_{i\to\infty}
			\int_{B_R(q_i)}|\nabla W_i|^2\,dV_{h_i}.
		\end{equation}
	\end{enumerate}

\end{proposition}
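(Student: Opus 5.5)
The plan is to reduce the statement to the known Rellich-type compactness results for functions and tensors on noncollapsed Ricci limit spaces, due to Honda \cite{Honda_2015}. The Gigli first-order calculus \cite{Gigli_2018} will then identify the limit derivative.

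\emph{Step 1: compactness.} By Bishop--Gromov, the hypotheses $\Ric_{h_i}\ge -K$ and $\vol_{h_i}(B_1(q_i))\ge v$ give uniform volume bounds on $B_R(q_i)$. The bounds \eqref{eq:Rellich-bounds} then make $\int_{B_R(q_i)}|W_i|^2$ uniformly bounded. Honda's weak compactness for $L^2$ tensor fields along pmGH converging sequences therefore gives a subsequence with $W_i\to W$ $L^2$-weakly on each ball, and also $\nabla W_i\to T$ $L^2$-weakly for some $T\in L^2_{\loc}(T^{\otimes2}X)$. A diagonal argument over $R\in\N$ makes the subsequence independent of $R$.

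\emph{Step 2: upgrade to strong convergence.} This is the main obstacle. I would first try to use Honda's Rellich theorem for $W^{1,2}$ tensor fields, namely that uniform $L^2$ bounds on $W_i$ and $\nabla W_i$ imply strong $L^2_{\loc}$ convergence, if it is available in the vector-field form. Otherwise I would reduce to functions.

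\emph{The function reduction.}
\begin{itemize}
\item Kato's inequality gives $|\nabla|W_i||\le|\nabla W_i|$. Hence $|W_i|^2$ is bounded in $W^{1,2}(B_R)$ (using the $L^\infty$ bound), and the scalar Rellich theorem yields strong convergence of $|W_i|$.
\item Fix test gradients $\nabla f$, where $f$ is a limit of harmonic functions $f_i$ with $|\nabla f_i|$ bounded and $\int|\Hess f_i|^2$ bounded (Cheeger--Colding harmonic approximations). The functions $\langle W_i,\nabla f_i\rangle$ are then bounded in $W^{1,2}$, so they converge strongly.
\item Such gradients span $L^2(TX)$ densely. Combining this with $\limsup\|W_i\|_{L^2}\le \|W\|_{L^2}$, obtained from the strong convergence of $|W_i|$, gives strong convergence of $W_i$ to $W$.
\end{itemize}

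\emph{Step 3: the $L^\infty$ bound in (2).} Strong $L^2$ convergence implies $|W_i|\to|W|$ strongly in $L^2$ on $B_R$. Pairing with nonnegative test functions supported in $B_R(q)\subset B_{R'}$ shows $\int\phi(|W|-L)_+=0$. Hence $|W|\le L$ a.e.

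\emph{Step 4: identifying the derivative and lower semicontinuity (3).} To show $T=\nabla W$ in the sense of \cite[Definition 3.4.1]{Gigli_2018}, I would pass to the limit in the smooth identity
\[\int\langle\nabla_{\nabla g_1}W_i,\nabla g_2\rangle\,\phi
 = -\int\langle W_i,\nabla g_2\rangle\,\diver(\phi\nabla g_1)
   -\int\phi\,\langle W_i,\Hess g_2(\nabla g_1,\cdot)\rangle .\]
The test functions $g_1,g_2,\phi$ are approximated by smooth ones on $N_i$ with $L^2$-strongly converging Hessians and Laplacians; the existence of such approximations is the delicate point, handled by Honda's convergence of Hessians of harmonic/eigenfunction approximations. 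The right-hand side then converges by the strong convergence of $W_i$ together with the strong convergence of the test objects. The left-hand side converges by the weak convergence of $\nabla W_i$. This shows $W\in W^{1,2}_{\loc}(TX)$ with $\nabla W=T$. Finally, \eqref{eq:covariant-lsc} follows from the lower semicontinuity of $L^2$ norms under Honda's weak convergence.
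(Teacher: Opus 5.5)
Your primary route is essentially the paper's: Honda's compactness on balls plus a diagonal argument, the upgrade to strong $L^2$ convergence via Honda's Rellich theorem for tensor fields (it is available, \cite[Theorem~6.9]{Honda_2018}, and the paper applies it on slightly larger balls after noting that its proof, although stated for compact spaces, is local), item (2) by testing against nonnegative cutoffs, and item (3) by identifying the weak limit of $\nabla W_i$ through Gigli's integration-by-parts definition, which is what the local version of \cite[Theorem~2.16]{Honda-Ketterer-Christian-Mondello-Perales-Rigoni} packages (only weak $L^2$ convergence of the test Hessians is needed, since they are paired with strongly convergent bounded objects).

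Your fallback function reduction would not close as written. Strong convergence of $|W_i|$ to some $\phi$ only gives $\|W_i\|_{L^2}\to\|\phi\|_{L^2}$. Strong convergence of the components $\langle W_i,\nabla f_i\rangle$, combined with mere density of test gradients, only yields $|W|\le\phi$. To prove $\phi=|W|$, and hence $\limsup\|W_i\|_{L^2}\le\|W\|_{L^2}$, you need local almost-orthonormal frames of $\delta$-splitting gradients whose Gram matrices converge, with the uniform $L^\infty$ bound on $W_i$ absorbing the small bad sets. That is essentially the content of Honda's theorem.
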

\begin{proof}
	This is essentially a direct consequence of \cite[Theorem~6.9]{Honda_2018}.
  For the reader's convenience, we sketch a proof. 
	
	 Choose radii $ R_j = 2j$.
	For a given $j$,  Bishop--Gromov comparison theorem  and the first inequality in
	\eqref{eq:Rellich-bounds} imply 
	\[
	\sup_i
	\|W_i\|_{L^4(B_{R_{j+1}}(q_i))}
	<\infty.
	\]
	Together with the second inequality in \eqref{eq:Rellich-bounds}, this
	yields
	\begin{equation}\label{eq:local-W4-bound}
		\sup_i\left(
		\|W_i\|_{L^4(B_{R_{j+1}}(q_i))}
		+
		\|\nabla W_i\|_{L^2(B_{R_{j+1}}(q_i))}
		\right)<\infty.
	\end{equation}
	
	It follows from 
	\cite[Proposition~3.50]{Honda_2015}, applied with exponent $4$ and
	tensor type $(1,0)$, that  there is a subsequence and a vector field
	\[
	W^{(j)}\in L^4(TB_{R_j}(q))
	\]
	such that
	\[
	W_i\to  W^{(j)}
	\qquad\text{weakly in }L^4
	\text{ on }B_{R_j}(q).
	\]
	Choosing the subsequences successively and taking a diagonal
	subsequence, we may assume that this convergence holds for every
	$j$.  Uniqueness of weak limits on overlapping balls gives
	\[
	W^{(j+1)}|_{B_{R_j}(q)}=W^{(j)}
	\textup{ almost everywhere.}
	\]
	Hence these vector fields define a single vector field
	\[
	W\in L^4_{\loc}(TX)\subset L^2_{\loc}(TX)
	\]
	such that
	\[
	W_i\to  W
	\qquad\text{weakly in }L^4
	\text{ on every }B_{R_j}(q).
	\]
	
	We now improve this convergence to strong $L^2$ convergence.  Note that the volume noncollapsing condition $\vol_{h_i}(B_1(q_i))\ge v>0$ implies that the convergence $(N_i,d_{h_i},q_i,dV_{h_i}) \to (X,d,q,\cH^n)$ is noncollapsed.
	The estimate \eqref{eq:local-W4-bound} verifies the hypotheses of
	\cite[Theorem~6.9]{Honda_2018} with $p=2$ and tensor type $(1,0)$.
	
	Although \cite[Theorem~6.9]{Honda_2018} is stated for compact
	ambient Ricci-limit spaces, its proof is local: it uses harmonic
	rectifiable charts and estimates on balls compactly contained in the
	domain. See also \cite[Remark 3.76]{Honda_2015}.   Applying that argument on $B_{R_{j+1}}(q_i)$ and restricting
	to $B_{R_j}(q_i)$ therefore gives
	\[
	W_i\to  W
	\qquad\text{strongly in }L^2
	\text{ on }B_{R_j}(q).
	\]
	In particular, 
	\[
	\lim_{i\to\infty}
	\int_{B_{R_j}(q_i)}|W_i|^2\,dV_{h_i}
	=
	\int_{B_{R_j}(q)}|W|^2\,d\cH^n.
	\]
	
	For $R>0$, 
	choose $j$ with $R<R_j$.  Restricting the preceding convergence to
	$B_R(q)$ gives
	\[
	W_i\longrightarrow W
	\qquad\text{strongly in }L^2
	\text{ on }B_R(q).
	\]
	This proves item $(1)$.
	
	Let us now prove item $(2)$.
	Suppose that $0<R<R'$ and
	\[
	|W_i|\le L
	\qquad\text{a.e. on }B_{R'}(q_i).
	\]
	We write
	\[
	\Lip_c(U)
	:=
	\{\phi\in\Lip(X):\supp\phi\subset U\}.
	\] Let
	\[
	0\le\phi\in\Lip_c(B_R(q)),
	\]
	and choose nonnegative Lipschitz approximations $\phi_i$ such that
	\[
	\phi_i\longrightarrow\phi
	\quad\text{strongly},
	\qquad
	\supp\phi_i\subset B_{R'}(q_i)
	\]
	for all sufficiently large $i$.  The strong $L^2$ convergence of
	$W_i$ gives
	\[
	\int \phi|W|^2\,d\cH^n
	=
	\lim_{i\to\infty}
	\int \phi_i|W_i|^2\,dV_{h_i}.
	\]
	Since $\phi_i\ge0$ and $|W_i|\le L$ on $\supp\phi_i$,
	\[
	\int \phi_i|W_i|^2\,dV_{h_i}
	\le
	L^2\int\phi_i\,dV_{h_i}.
	\]
	Passing to the limit yields
	\[
	\int\phi\bigl(|W|^2-L^2\bigr)\,d\cH^n\le0.
	\]
	Since this holds for every nonnegative
	$\phi\in\Lip_c(B_R(q))$, we conclude that
	\[
	|W|\le L
\textup{ almost everywhere  on }B_R(q).
	\]
	This prove item $(2)$. 
	
	Now item $(3)$ is the consequence of a local version of
	\cite[Theorem~2.16]{Honda-Ketterer-Christian-Mondello-Perales-Rigoni}.  Compare also \cite[Remark~2.3]{Honda-Ketterer-Christian-Mondello-Perales-Rigoni}.   Applying \cite[Theorem~2.16]{Honda-Ketterer-Christian-Mondello-Perales-Rigoni}
	on a slightly larger ball gives local weak convergence of the
	covariant derivatives and \eqref{eq:covariant-lsc}.  An exhaustion by
	 balls proves item $(3)$.
\end{proof} 

By combining  \cref{prop:vector-Rellich} with \cref{prop:almost-parallel-vector-fields} and \cref{lem:Gram-control}, we have the following. 

\begin{proposition}[Limit of an almost-parallel orthonormal frame]\label{prop:limit-frame}
	Let $(N_i,h_i,q_i)$ be complete pointed $n$ dimensional Riemannian
	manifolds satisfying $\Ric_{h_i}\ge -K$ and $\vol_{h_i}(B_1(q_i))\ge v>0$.
  Suppose that 	$(N_i,d_{h_i},q_i,dV_{h_i})$ converges to $(X,d,q,\cH^n)$ in the pointed measured Gromov-Hausdorff sense.
  Furthermore, suppose there are smooth vector fields $W_i^1,\ldots,W_i^n$ with $\|W_i^a\|_{L^\infty}\le L$ such that for every $R>0$
  we have
	\begin{align}
		\int_{B_R(q_i)}|\nabla W_i^a|^2dV_{h_i}&\longrightarrow0,
		\label{eq:frame-energy-assumption}\\
		\int_{B_R(q_i)}
		\bigl|h_i(W_i^a,W_i^b)-\delta_{ab}\bigr|^2dV_{h_i}
		&\longrightarrow0
		\label{eq:frame-Gram-assumption}
	\end{align}
	for all $1\leq a, b\leq n$. 
	Then, after passage to a subsequence, $W_i^a$  converges strongly in
	$L^2_{\mathrm{loc}}$ to $W^a\in L^2_{\mathrm{loc}}(TX)$, where  $ W^a\in  W^{1,2}_{\mathrm{loc}}(TX)
	\cap D_{\mathrm{loc}}(\diver)$
	satisfies 
	\begin{enumerate}[$(1)$]
		\item $\nabla W^a=0$,
		\item $\diver W^a =0$,
		\item $\langle W^a,W^b\rangle=\delta_{ab}$
		 almost everywhere,
		 \item and $\|W^a\|_{L^\infty}\le L$.
	\end{enumerate}
	Moreover, the local dimension of $X$ equals $n$ almost everywhere, and the vectors $W^1(x),\ldots,W^n(x)$ form an
	orthonormal basis of the tangent fiber $T_xX$ for
	almost every $x$.  
\end{proposition}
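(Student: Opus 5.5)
Apply \cref{prop:vector-Rellich} to each of the $n$ sequences $W_i^a$ at once. The hypothesis $\|W_i^a\|_{L^\infty}\le L$ and the assumption \eqref{eq:frame-energy-assumption} supply the bounds \eqref{eq:Rellich-bounds}, with $L_R=L$ and $E_R$ uniform in $i$ because the energies tend to zero. After a diagonal subsequence we obtain limits $W^a\in W^{1,2}_{\mathrm{loc}}(TX)$ with strong local $L^2$ convergence. Item (2) of \cref{prop:vector-Rellich}, applied with $R'=R+1$ for every $R$, gives $\|W^a\|_{L^\infty}\le L$, which is (4). Item (3) and the lower semicontinuity \eqref{eq:covariant-lsc}, combined with \eqref{eq:frame-energy-assumption}, give $\int_{B_R(q)}|\nabla W^a|^2\,d\cH^n=0$ for all $R$, hence $\nabla W^a=0$, which is (1).

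For (2) I would use that the divergence is the trace of the covariant derivative. On an $\RCD$ space, for $V\in W^{1,2}_{\mathrm{loc}}(TX)$ one has $V\in D_{\mathrm{loc}}(\diver)$ with $\diver V=\operatorname{tr}\nabla V$ (\cite{Gigli_2018}), so $\diver W^a=0$. If I wanted to avoid citing this, I would argue directly. For $\phi\in\Lip_c(X)$, take approximating $\phi_i$ converging strongly in $W^{1,2}$, which is possible in the noncollapsed setting. Then
\[
\int\langle W_i^a,\nabla\phi_i\rangle\,dV_{h_i}=-\int\phi_i\operatorname{tr}\nabla W_i^a\,dV_{h_i}.
\]
The right side tends to $0$ by \eqref{eq:frame-energy-assumption} and Cauchy--Schwarz. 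The left side converges to $\int\langle W^a,\nabla\phi\rangle\,d\cH^n$ by strong$\times$strong convergence in Honda's sense.

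For (3), strong $L^2$ convergence together with the uniform $L^\infty$ bound implies that $h_i(W_i^a,W_i^b)$ converges strongly in $L^2_{\mathrm{loc}}$ to $\langle W^a,W^b\rangle$. Polarization and the continuity of the $L^2$ inner product under strong convergence give this, and testing against $\phi\ge0$ as in the proof of item (2) of \cref{prop:vector-Rellich} also works. Then \eqref{eq:frame-Gram-assumption}, together with lower semicontinuity of $L^2$ norms under strong convergence (which here is in fact continuity), yields $\int_{B_R(q)}|\langle W^a,W^b\rangle-\delta_{ab}|^2\,d\cH^n=0$.

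Finally, $X$ is a noncollapsed limit, so it is an $\RCD(-K,n)$ space with measure $\cH^n$. Its tangent module has local dimension at most $n$ almost everywhere; for noncollapsed spaces it is exactly $n$. Alternatively, the $n$ pointwise orthonormal fields $W^a(x)$ force the fiber dimension to be at least $n$, hence equal to $n$, and then $n$ orthonormal vectors form an orthonormal basis of $T_xX$ for almost every $x$. The main obstacle is the divergence statement (2): making rigorous that $\diver=\operatorname{tr}\nabla$ in the local setting, or carrying out the approximation of test functions $\phi$ by $\phi_i$ with strong $W^{1,2}$ convergence. I would rely on the local versions of Honda's convergence results, as in \cref{prop:vector-Rellich}.
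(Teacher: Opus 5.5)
Your proposal is correct and follows the paper's proof essentially step for step. Items (1) and (4) come from \cref{prop:vector-Rellich} together with \eqref{eq:covariant-lsc}. Item (3) comes from comparing the strong local limit of $h_i(W_i^a,W_i^b)$ with \eqref{eq:frame-Gram-assumption}; the paper works in $L^1_{\mathrm{loc}}$ and you work in $L^2_{\mathrm{loc}}$ using the $L^\infty$ bound, which makes no difference. Item (2) comes from $|\diver W_i^a|\le\sqrt n\,|\nabla W_i^a|$ plus stability of the divergence. Your ``direct'' argument for (2) tests against $W^{1,2}$-strong approximations $\phi_i\to\phi$ supported in a fixed ball. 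This is precisely the content of the divergence-stability lemma the paper cites from \cite{Honda-Ketterer-Christian-Mondello-Perales-Rigoni}, and it is the version you should keep.

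Drop the first route you give for (2). The identity $\diver V=\operatorname{tr}\nabla V$ is not a general fact on $\RCD$ spaces. On a weighted manifold $(M,g,e^{-\psi}\mathrm{vol}_g)$ one has $\diver V=\operatorname{tr}\nabla V-\langle\nabla\psi,V\rangle$. On a noncollapsed space with measure $\cH^n$ it does hold for test fields, since there $\Delta f=\operatorname{tr}\Hess f$, and hence by closure it holds on $H^{1,2}_C(TX)$. However, \cref{prop:vector-Rellich} only places $W^a$ in $W^{1,2}_{\mathrm{loc}}(TX)$, so invoking the identity would require an additional argument.

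Your alternative for the dimension claim is valid. There, $n$ pointwise orthonormal elements force the local dimension to be at least $n$, while an $\RCD(-K,n)$ space has local dimension at most $n$. This is a slightly more self-contained substitute for the paper's appeal to the structure theory of noncollapsed spaces in that step.
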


\begin{proof}
	It follows from \cref{prop:vector-Rellich} that, after passing to a
	subsequence,
	\[
	W_i^a\longrightarrow W^a
	\qquad\text{strongly in }L^2_{\mathrm{loc}},
	\qquad a=1,\ldots,n,
	\] 
	for $W^a\in L^2_{\mathrm{loc}}(TX)$ with $\|W^a\|_{L^\infty}\le L$. Moreover,  for every  
	$R>0$, 
	one has
	\[
	\int_{B_R(q)}|\nabla W^a|^2\,d\cH^n
	\le
	\liminf_{i\to\infty}
	\int_{B_R(q_i)}|\nabla W_i^a|^2\,dV_{h_i}
	=0.
	\]
	Thus $	W^a\in W^{1,2}_{\mathrm{loc}}(TX)$ and $	\nabla W^a=0$.

	Set
	\[
	G_i^{ab}:=h_i(W_i^a,W_i^b),
	\qquad
	G^{ab}:=\langle W^a,W^b\rangle.
	\]
	By \cite[Corollary~3.59 and Proposition~3.62]{Honda_2015}, the strong local $L^2$ convergence of $W_i^a$ implies
	\[
	G_i^{ab}\longrightarrow G^{ab}
	\qquad\text{strongly in }L^1_{\loc}.
	\]
	On the other hand, \eqref{eq:frame-Gram-assumption} and the uniform
	upper volume bounds on fixed balls give
	\[
	G_i^{ab}\longrightarrow\delta_{ab}
	\qquad\text{strongly in }L^1_{\loc}.
	\]
	Consequently,
	\[
	\langle W^a,W^b\rangle=\delta_{ab}
	\textup{ almost everywhere.}
	\]

    Since $(X,d,\cH^n)$ is a noncollapsed $\RCD(-K,n)$ space,
     its tangent module has constant dimension $n$; see
\cite[Theorem~1.12\textup{(vii)} and Corollary~2.14]
{De_Philippis_2018}.
Consequently, the $n$ pointwise orthonormal fields
$W^1,\ldots,W^n$ form an orthonormal basis of the tangent module
almost everywhere.
	
	
	Finally, on each $N_i$,
	\[
	|\diver W_i^a|
	\le \sqrt n\,|\nabla W_i^a|.
	\]
	Thus, for every $R>0$,
	\[
	\|\diver W_i^a\|_{L^2(B_R(q_i))}
	\longrightarrow0.
	\]
	The local version of divergence stability
	\cite[Lemma~2.15 and Remark~2.3]{Honda-Ketterer-Christian-Mondello-Perales-Rigoni} now gives
	\[
	W^a\in D_{\mathrm{loc}}(\diver),
	\qquad
	\diver W^a=0.
	\]
	This finishes the proof.
\end{proof}

\subsection{From $\RCD(-K,n)$ to $\RCD(0,n)$}
The pointed limit space obtained in \cref{prop:limit-frame} is a priori only an
$\RCD(-K,n)$ space.  In this subsection, we show that the existence of
global parallel, divergence-free orthonormal vector fields
$
W^1,\ldots,W^n$
improves it to an $\RCD(0,n)$ space.

Let $(X,d,\mathfrak m)$ be a metric measure space.  A function
$u\in W^{1,2}(X)$ belongs to the $L^2$-Laplacian domain $D(\Delta)$ if there
is a function, denoted $\Delta u\in L^2(X,\mathfrak m)$, such that
\[
\int_X\langle\nabla u,\nabla\phi\rangle\,d\mathfrak m
=-\int_X\phi\,\Delta u\,d\mathfrak m
\qquad \forall \phi\in W^{1,2}(X).
\]
On an $\RCD(K, N)$ space,  we use the following  test-function classes (\cite[Section 3.1]{Gigli_2018}\cite[Section 3.2]{Savar_2014})
\[
\begin{split}
	\TestF(X)&:=\{f\in D(\Delta)\cap L^\infty(X):
	|\nabla f|\in L^\infty(X),\ \Delta f\in W^{1,2}(X)\},\\
	\TestF_c(X)&:=\{f\in\TestF(X):\supp f\text{ is compact}\}.
\end{split}
\]
For $f\in\TestF(X)$, $\Hess f$ denotes its Hessian and
$|\Hess f|_{\mathrm{HS}}$ its pointwise Hilbert--Schmidt norm \cite[Section 3.1]{Gigli_2018}. 
Let
\[
\TestV(X)
:=
\left\{
\sum_{i=1}^k g_i\nabla f_i:
k\in\mathbb N,\quad f_i,g_i\in\TestF(X)
\right\}.
\]
Recall that $W^{1,2}(TX)$ is the space of vector fields
$V\in L^2(TX)$ admitting a weak covariant derivative
$
\nabla V\in L^2(T^{\otimes 2}X),$
equipped  with the norm
\[
\|V\|_{W^{1,2}(TX)}^2
=
\|V\|_{L^2(TX)}^2
+
\|\nabla V\|_{L^2(T^{\otimes 2}X)}^2.
\]
The space $H^{1,2}_C(TX)$ is defined by
\[
H^{1,2}_C(TX)
:=
\overline{\TestV(X)}^{\,W^{1,2}(TX)}.
\]
See \cite[Definitions~3.4.1 and~3.4.3]{Gigli_2018}.


%

\begin{proposition}\label{prop:RCD0n}
	Let $(X,d,\cH^n)$ be a complete noncollapsed $\RCD(K,n)$ space. Suppose that
	there exist vector fields
	\[
	W^1,\ldots,W^n
	\in
	W^{1,2}_{\loc}(TX)\cap L^\infty(TX)
	\cap D_{\loc}(\diver)
	\]
	which form an orthonormal basis of the local $L^0$-tangent module almost everywhere and satisfy
	\[
	\nabla W^a=0,
	\qquad
	\diver W^a=0,
	\qquad
	a=1,\ldots,n.
	\]
	Then $(X,d,\cH^n)$ is an $\RCD(0,n)$ space.
\end{proposition}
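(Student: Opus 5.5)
We will verify the $\RCD(0,n)$ condition through Bochner's inequality. On an $\RCD(K,n)$ space this is equivalent, by the Erbar--Kuwada--Sturm and Ambrosio--Gigli--Savar\'e characterization, to the weak Bochner inequality
\[
\frac12\int_X\Delta\phi\,|\nabla f|^2\,d\cH^n-\int_X\phi\,\langle\nabla f,\nabla\Delta f\rangle\,d\cH^n\;\ge\;\frac1n\int_X\phi\,(\Delta f)^2\,d\cH^n
\]
for all $f\in\TestF(X)$ and nonnegative $\phi\in\TestF_c(X)$ with $\Delta\phi\in L^\infty$. Since the space is noncollapsed with reference measure $\cH^n$, we may instead use Gigli's improved Bochner identity. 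For test functions it reads
\[
\mathbf\Delta\tfrac12|\nabla f|^2=\Bigl(|\Hess f|_{\mathrm{HS}}^2+\langle\nabla f,\nabla\Delta f\rangle\Bigr)\cH^n+\mathbf{Ric}(\nabla f,\nabla f),
\]
where $\mathbf{Ric}$ is Gigli's measure-valued Ricci tensor. It then suffices to show $\mathbf{Ric}(V,V)\ge0$ for $V\in H^{1,2}_C(TX)$, or at least for gradients of test functions. The trace inequality $|\Hess f|^2_{\mathrm{HS}}\ge(\Delta f)^2/n$, which holds on noncollapsed spaces since $\operatorname{tr}\Hess f=\Delta f$ by De Philippis--Gigli, then yields the dimensional term.

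The key step is to compute $\mathbf{Ric}$ in the parallel frame. Write $f_a:=\langle\nabla f,W^a\rangle$, so that $\nabla f=\sum_a f_aW^a$ almost everywhere. We will use the defining identity of $\mathbf{Ric}$ on $H^{1,2}_H(TX)$,
\[
\mathbf{Ric}(V,V)=\mathbf\Delta\tfrac12|V|^2+\Bigl(\tfrac12\langle V,\Delta_H V\rangle... \Bigr)
\]
More precisely, integrated against $\phi$, it reads
\[
\int\phi\,d\mathbf{Ric}(V,V)=\int\Bigl(\tfrac12|V|^2\Delta\phi-\phi|\nabla V|^2+\phi|dV^\flat|^2+\phi(\delta V^\flat)^2\Bigr)-\ldots
\]
in the Hodge form. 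We plan to test it on $V=hW^a$ for a Lipschitz or test function $h$, and to use $\nabla W^a=0$ and $\diver W^a=0$. Then $\nabla(hW^a)=\nabla h\otimes W^a$, $\diver(hW^a)=\langle\nabla h,W^a\rangle$, and $d(hW^a)^\flat=dh\wedge (W^a)^\flat$ (parallel fields have closed duals). Substituting, and using $|\nabla h|^2=\langle\nabla h,W^a\rangle^2+|dh\wedge W^{a\flat}|^2$ from orthonormality, every term cancels. This gives $\mathbf{Ric}(hW^a,hW^a)=0$, and by polarization and the tensoriality (locality) of $\mathbf{Ric}$, $\mathbf{Ric}(hW^a,kW^b)=0$. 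Because $\mathbf{Ric}$ is $L^\infty$-bilinear and continuous on $H^{1,2}_H(TX)$ with respect to the $W^{1,2}$ topology, we conclude $\mathbf{Ric}(V,V)=0$ for $V=\nabla f=\sum f_aW^a$. The Bochner inequality with $K=0$ then follows.

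The main obstacle is justifying that $hW^a$ and $\nabla f$ expanded as $\sum f_aW^a$ lie in the domain where $\mathbf{Ric}$ is defined and continuous, namely $H^{1,2}_H(TX)$, and that the frame expansion converges there. The fields $W^a$ are only in $W^{1,2}_{\loc}$, not a priori in $H^{1,2}_C$. Our first attempt will localize with cutoffs $\chi\in\TestF_c$. We will show that $\chi W^a\in H^{1,2}_H$: its dual form is in $W^{1,2}_H$ because $d$ and $\delta$ of it are controlled by the computation above, and on noncollapsed $\RCD$ spaces $W^{1,2}_{\loc}\cap D(\diver)$ fields with $L^2$ exterior differential can be approximated by test forms, via Honda's harmonic-chart approximation. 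We expect to appeal to the locality of $\mathbf{Ric}$, which lets us work on balls. If direct membership proves hard, the fallback is to avoid $\mathbf{Ric}$ entirely. In that route we use that the $W^a$ are divergence-free and parallel, so their regular Lagrangian flows (Ambrosio--Trevisan) are measure-preserving isometries. Translating along $n$ commuting isometric flows then shows $X$ is locally a quotient of $\R^n$, hence flat, and in particular $\RCD(0,n)$.
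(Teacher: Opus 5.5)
\textbf{There is a genuine gap, exactly at the step you flag as the ``main obstacle'', and neither proposed fix closes it.}

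Gigli's measure-valued $\mathbf{Ric}$ is defined, and continuous, only on $H^{1,2}_H(TX)$, the closure of $\TestV(X)$ in the Hodge Sobolev norm. The hypothesis gives you $W^a$ only in $W^{1,2}_{\loc}(TX)$, i.e.\ with a \emph{weak} covariant derivative. To know that $\chi W^a$ or $hW^a$ is a $W^{1,2}_H$-limit of test vector fields is a density statement of ``$H=W$'' type. That is not available in general on $\RCD$ spaces, and appealing to Honda's harmonic-chart approximation does not supply it.

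Without it, several steps have nothing to stand on:
\begin{enumerate}
\item $\mathbf{Ric}(hW^a,hW^a)$ is not defined.
\item The integrated Hodge formula cannot be applied; its elided $d\phi$-terms are precisely what make your cancellation work.
\item The passage to $\nabla f=\sum_a f_aW^a$ ``by continuity on $H^{1,2}_H$'' has no foundation.
\item The cross terms $\mathbf{Ric}(hW^a,kW^b)$ need a tensoriality property you have not justified.
\end{enumerate}
The fallback via regular Lagrangian flows is only an outline. That the flows are isometries, that they commute, and that the resulting $\R^n$-action makes $X$ locally Euclidean are each substantial statements you would have to prove.

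\textbf{The missing idea:} you never need $W^a$ itself in the domain of $\mathbf{Ric}$; it suffices to pair $W^a$ with gradients of test functions. This is how the paper argues.
\begin{enumerate}
\item For $f\in\TestF_c(X)$ set $u_a=\langle\nabla f,W^a\rangle$. The definition of the weak covariant derivative, together with $\nabla W^a=0$, gives $u_a\in W^{1,2}_{\loc}$ with $\nabla u_a=\Hess f(W^a,\cdot)^\sharp$.
\item Since $\diver W^a=0$, one gets $\Delta f=\sum_a\langle\nabla u_a,W^a\rangle$.
\item Integrate the identity $W^a(\langle\nabla f,\nabla\varphi\rangle)=\Hess f(W^a,\nabla\varphi)+\Hess\varphi(W^a,\nabla f)$ against the divergence-free field. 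This yields $\Delta u_a=\langle\nabla\Delta f,W^a\rangle$ weakly.
\item Hence the exact identity $\tfrac12\Delta|\nabla f|^2-\langle\nabla f,\nabla\Delta f\rangle=\sum_a|\nabla u_a|^2$ holds.
\item Cauchy--Schwarz applied to $\Delta f=\sum_a\langle\nabla u_a,W^a\rangle$, using $|W^a|=1$, gives $\sum_a|\nabla u_a|^2\ge\frac1n(\Delta f)^2$.
\end{enumerate}
That is $\operatorname{BE}(0,n)$ directly. It needs neither $\mathbf{Ric}$, nor $H^{1,2}_H$, nor the trace identity $\operatorname{tr}\Hess f=\Delta f$.
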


\begin{proof}
	We prove that $(X,d,\cH^n)$ satisfies the Bakry-\'Emery condition (or Bochner inequality)
	$\operatorname{BE}(0,n)$; see, for example, \cite[Definition 4.7]{Erbar_2014}. The conclusion then follows from the equivalence between
	$\operatorname{BE}(0,n)$ and $\RCD(0,n)$ 
	\cite{Erbar_2014}.
	
	Let $f\in\TestF_c(X)$. For $a=1,\ldots,n$, define 
	\[
	u_a:=\langle\nabla f,W^a\rangle.
	\]
	Since $\nabla f\in H^{1,2}_C(TX)$ and
	$W^a\in W^{1,2}_{\loc}(TX)$, it follows from 
	\cite[Theorem 3.4.2 (iv) \& Proposition~3.4.6 ]{Gigli_2018} that  $u_a \in W^{1, 2}_{\loc}(X)$ and 
	\[
	\nabla u_a
	=
	\Hess f(W^a,\cdot)^\sharp
	+
	\bigl\langle\nabla f,\nabla W^a\bigr\rangle^\sharp,
	\]
	where $\sharp$ is the musical isomorphism $\sharp\colon L^2(T^\ast X)\to L^2(TX)$. 
	
	Since $\nabla W^a=0$, it follows that
	\begin{equation}\label{eq:gradientofua}
		\nabla u_a=\Hess f(W^a,\cdot)^\sharp.
	\end{equation}
	Because $W^1,\ldots,W^n$ form a pointwise orthonormal basis of the tangent
	module, we have
	\begin{equation}\label{eq:gradientdecompose}
		\nabla f=\sum_{a=1}^n u_aW^a,
		\qquad
		|\nabla f|^2=\sum_{a=1}^n u_a^2.
	\end{equation}
	Using the fact that
	$\diver W^a=0$, we obtain
	\begin{align}
		\Delta f
		&=\diver(\nabla f)=\sum_{a=1}^n\diver(u_aW^a)\notag \\
		&=\sum_{a=1}^n
		\left(
		\langle\nabla u_a,W^a\rangle
		+
		u_a\diver W^a
		\right)\notag  \\
		&=\sum_{a=1}^n\langle\nabla u_a,W^a\rangle.  
		\label{eq:laplacian-trace}
	\end{align}
	
  Now let us prove  the  identity
	\begin{equation}\label{eq:laplace-ua}
		\Delta u_a=\langle\nabla\Delta f,W^a\rangle
	\end{equation}
	in the weak sense, that is,
	\[
	\int_X\langle\nabla u_a,\nabla\varphi\rangle\,d\cH^n
	=
	-\int_X\varphi\,
	\langle\nabla\Delta f,W^a\rangle\,d\cH^n,
	\] 
	for all $\varphi\in\TestF_c(X)$. Indeed, 
	by \eqref{eq:gradientofua}, we have 
	\[
	\int_X\langle\nabla u_a,\nabla\varphi\rangle\,d\cH^n
	=
	\int_X\Hess f(W^a,\nabla\varphi)\,d\cH^n.
	\]
	It follows from \cite[ Proposition~3.3.22]{Gigli_2018}  that
	\[
	W^a\bigl(\langle\nabla f,\nabla\varphi\rangle\bigr)
	=
	\Hess f(W^a,\nabla\varphi)
	+
	\Hess\varphi(W^a,\nabla f).
	\]
	Since $f$ and $\varphi$ are compactly supported and $\diver W^a=0$, we have 
	\[
	\int_X
	W^a\bigl(\langle\nabla f,\nabla\varphi\rangle\bigr)
	\,d\cH^n
	= -\int_X
	\bigl(\langle\nabla f,\nabla\varphi\rangle\bigr) \diver W^a
	\,d\cH^n = 0.
	\]
	Consequently,
	\[
	\int_X\Hess f(W^a,\nabla\varphi)\,d\cH^n
	=
	-\int_X\Hess\varphi(W^a,\nabla f)\,d\cH^n.
	\]
	We apply  \eqref{eq:gradientofua}
	with $f$ replaced by $\varphi$ and $u_a$ replaced by $\langle \nabla \varphi, W^a\rangle$, then 
	\[ \Hess\varphi(W^a,\nabla f) = \langle\nabla\langle\nabla\varphi,W^a\rangle,
	\nabla f\rangle.\]
	It follows that 
	\begin{align*}
		\int_X\Hess\varphi(W^a,\nabla f)\,d\cH^n
		&=
		\int_X\langle\nabla\langle\nabla\varphi,W^a\rangle,
		\nabla f\rangle\,d\cH^n\\
		&=
		-\int_X\langle\nabla\varphi,W^a\rangle\Delta f\,d\cH^n \quad \textup{ by definition of $\Delta$} \\
		&=
		\int_X\varphi\,
		\diver\bigl((\Delta f)W^a\bigr)\,d\cH^n \quad \textup{by definition of $\diver$}  \\
		&=
		\int_X\varphi\,
		\langle\nabla\Delta f,W^a\rangle\,d\cH^n \quad \textup{by the Leibniz rule of $\diver$ and $\diver W^a =0$.}
	\end{align*}
	This proves \eqref{eq:laplace-ua}.
	
	Applying the Laplacian to  \eqref{eq:gradientdecompose}, we obtain 
	\begin{align}
		\frac12\Delta|\nabla f|^2
		&=
		\frac12\sum_{a=1}^n\Delta(u_a^2)\notag\\
		&=
		\sum_{a=1}^n
		\left(
		u_a\Delta u_a+|\nabla u_a|^2
		\right).
		\label{eq:bochner-frame-first}
	\end{align}
	in the weak sense (see for example \cite[Equation (2.3.24)]{Gigli_2018}).
	On the other hand, we have 
	\begin{align}
		\langle\nabla f,\nabla\Delta f\rangle
		&=
		\sum_{a=1}^n
		u_a\langle W^a,\nabla\Delta f\rangle \textup{ by  \eqref{eq:gradientdecompose}}\notag \\
		&=
		\sum_{a=1}^n u_a\Delta u_a \textup{ by 
			\eqref{eq:laplace-ua}.}
		\label{eq:bochner-frame-second}
	\end{align}
	Subtracting \eqref{eq:bochner-frame-second} from
	\eqref{eq:bochner-frame-first}, we have 
	\begin{equation}\label{eq:exact-bochner-parallel-frame}
		\frac12\Delta|\nabla f|^2
		-
		\langle\nabla f,\nabla\Delta f\rangle
		=
		\sum_{a=1}^n|\nabla u_a|^2.
	\end{equation}
	Finally, \eqref{eq:laplacian-trace} and the Cauchy--Schwarz
	inequality imply
	\begin{align*}
		(\Delta f)^2
		&=
		\Big(
		\sum_{a=1}^n\langle\nabla u_a,W^a\rangle
		\Big)^2\leq
		n\sum_{a=1}^n
		\langle\nabla u_a,W^a\rangle^2 \leq
		n\sum_{a=1}^n|\nabla u_a|^2,
	\end{align*}
	where we used $|W^a|=1$ in the last inequality. This combined  with
	\eqref{eq:exact-bochner-parallel-frame} gives
	\[
	\frac12\Delta|\nabla f|^2
	-
	\langle\nabla f,\nabla\Delta f\rangle
	\geq
	\frac1n(\Delta f)^2
	\]
	in the weak sense, that is, we have 
	\[
	\frac12\int_X\Delta\varphi\,|\nabla f|^2\,d\cH^n
	-
	\int_X\varphi\,
	\langle\nabla f,\nabla\Delta f\rangle\,d\cH^n
	\geq
	\frac1n\int_X\varphi(\Delta f)^2\,d\cH^n
	\]
	for every nonnegative
	$\varphi\in \TestF_c(X)$.
	This is precisely the Bakry-\'Emery condition
	$\operatorname{BE}(0,n)$.
	Therefore $(X,d,\cH^n)$ is an $\RCD(0,n)$ space.
\end{proof}

\subsection{Euclidean volume growth and flatness}
\label{subsec:maximal-volume} In this subsection, we show that the uniformly controlled asymptotically flat ends give maximal
Euclidean volume growth on the pointed limit space.  As the limit space is an $\RCD(0,n)$ space, we use the sharp
Bishop--Gromov inequality to show that the limit space is isometric to the Euclidean space.

\begin{lemma}
	\label{lem:end-maximal-volume}
Fix an integer $n\ge3$, constants $b>0$, $\tau>(n-2)/2$ and $K\ge0$,
and a point
\[
p\in\R^n\setminus\overline B^{\mathbb E}_1(0).
\]
Let $(M_i,g_i,\Phi_i)$ be a sequence of connected, complete, one-ended,
$(b,\tau)$-asymptotically flat  $n$-manifolds  with $\Ric_{g_i}\ge-K$.
Let $p_i:=\Phi_i^{-1}(p)$, and suppose that $(M_i,d_{g_i},p_i,dV_{g_i})$ converges 
to $(X,d, p,\cH^n)$ in pointed measured Gromov-Hausdorff sense.
Then we have 
	\begin{equation}\label{eq:lowerbound}
		\limsup_{r\to\infty}
		\frac{\cH^n(B_r(p))}{\omega_n r^n}\ge1.
	\end{equation}
	where $
    \omega_n$
    is the volume of the Euclidean unit ball.
\end{lemma}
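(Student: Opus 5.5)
The plan is to show that for every fixed large $r$, the limit ball $B_r(p)$ contains the limit of a uniformly controlled Euclidean-like annular region coming from the asymptotic charts, whose volume is close to that of a Euclidean ball of radius $r$ minus a bounded piece. Since the limit measure is $\cH^n$ and the convergence is noncollapsed, volume convergence (Colding, Cheeger--Colding) gives $\cH^n(B_r(p))=\lim_i \vol_{g_i}(B_r(p_i))$. Here noncollapsing holds because the ball $B_1(p_i)$ contains a chart region on which $g_i$ is uniformly equivalent to the Euclidean metric, so its volume is bounded below uniformly. Hence it suffices to bound $\vol_{g_i}(B_r(p_i))$ from below, uniformly in $i$, by $\omega_n r^n(1-o(1))$ as $r\to\infty$.

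\textbf{Step 1: uniform closeness to Euclidean on the end.} Choose $\rho_0$ large, depending only on $b,\tau,n$, so that for $|x|\ge\rho_0$ we have $(1-\epsilon(\rho))\delta\le g^{(0)}_i\le(1+\epsilon(\rho))\delta$, where $\epsilon(\rho)=Cb\rho^{-\tau}\to0$. This holds uniformly in $i$.

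\textbf{Step 2: distance comparison.} For points $x$ in the annulus $A_{\rho,r}=\{\rho\le |x|\le r\}$ of the chart with $\rho\ge \rho_0$, consider the curve from $\mathbf p$ that goes radially out to $|y|=\rho$ (of length at most $\rho+|\mathbf p|$), then travels within the sphere region to align with $x$ (length at most $\pi\rho$), then goes radially to $x$. Measured in $g_i$, this curve has length at most $(1+\epsilon)\bigl(|x|+C\rho+|\mathbf p|\bigr)$. Hence $\Phi_i^{-1}(A_{\rho,r'})\subset B_r(p_i)$ whenever $(1+\epsilon(\rho_0))(r'+C\rho+|\mathbf p|)\le r$. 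No lower bound on distances is needed, since we only want a lower bound on volume.

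\textbf{Step 3: volume estimate.} With $r'$ chosen as above,
\[
\vol_{g_i}(B_r(p_i))\ge \vol_{g_i}\bigl(\Phi_i^{-1}(A_{\rho,r'})\bigr)\ge (1-\epsilon)^{n/2}\omega_n\bigl(r'^n-\rho^n\bigr).
\]
Letting $i\to\infty$ gives the same lower bound for $\cH^n(B_r(p))$. Now divide by $\omega_n r^n$ and let $r\to\infty$ with $\rho$ fixed. Since $r'/r\to(1+\epsilon(\rho))^{-1}$, we obtain
\[
\liminf_{r\to\infty}\frac{\cH^n(B_r(p))}{\omega_n r^n}\ge (1-\epsilon(\rho))^{n/2}(1+\epsilon(\rho))^{-n}.
\]
Then send $\rho\to\infty$ to get \eqref{eq:lowerbound}, in fact with a $\liminf$.

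\textbf{Main obstacle.} The main point is justifying volume convergence of balls under pointed measured Gromov--Hausdorff convergence to $\cH^n$. I would cite Cheeger--Colding volume convergence, or simply use the fact that the hypothesis already says the limit measure is $\cH^n$, the limit of $dV_{g_i}$. Weak convergence of measures gives $\cH^n(\overline{B_r(p)})\ge\limsup_i \vol(B_r(p_i))$. To pass from closed balls to open ones, note that spheres have measure zero for all but countably many radii, which suffices for the $\limsup$.
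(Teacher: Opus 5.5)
Your proposal is correct and follows essentially the paper's route: place the chart annulus inside a metric ball about $p_i$ of radius $(1+o(1))$ times its outer radius, bound its volume below by $\omega_n r'^n(1-o(1))$, and pass to the limit. The paper cites De Philippis--Gigli volume convergence for that last step, but your observation that upper semicontinuity of weakly converging measures on closed balls suffices works equally well and even makes noncollapsing unnecessary. That is useful, because uniform Euclidean equivalence near $\mathbf p$ is not guaranteed when $b|\mathbf p|^{-\tau}$ is large; in the same spirit, on the leg of your path with $|y|<\rho_0$ you only have $g_i\le(1+Cb)\delta$, which adds a harmless $O(1)$ length.
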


\begin{proof}
	Let us write $r=|x|$.  For  sufficiently large $r_0$, there exists a constant $C>0$ such that  
	\begin{equation}\label{eq:AF-uniform}
		|g_i-g_{\mathbb E}|\le Cr^{-\tau}
	\textup{ and }
		\sqrt{\det g_i}\ge1-Cr^{-\tau}.
	\end{equation}
	for all
	$r\ge r_0$, where $g_{\mathbb E}$ is the Euclidean metric on $\mathbb R^n$. 
	
    In $(\mathbb R^n, g_{\mathbb E})$, the annulus $A(r_0, R) = \{x\in \mathbb R^n: r_0<|x|< R\}$ is contained in $B_{C_1 + R-r_0}(p)$, 
    qwhere $C_1$ is the distance of $ p$ and the circle centered at the origin of radius $r_0$. It follows from \eqref{eq:AF-uniform} that $\Phi_i^{-1}(A(r_0, R))$ is contained in $B_{\rho(R)}(p_i)$, where 
    \[ \rho(R)=C_1+R-r_0+C\int_{r_0}^R s^{-\tau}ds.  \]
    Moreover, using \eqref{eq:AF-uniform}, we have  \begin{equation}\label{eq:AF-annulus-volume}
  	\vol_{g_i}(\Phi_i^{-1}(A(r_0,R)))
    		\geq \omega_n(R^n-r_0^n)
    		-C\int_{r_0}^{R}s^{n-1-\tau}\,ds = \omega_n R^n\bigl(1-\varepsilon(R)\bigr)
    \end{equation}
    for some $\varepsilon(R)$ such that $\varepsilon(R) \to 0$ as $R\to \infty$.
    
    It is clear that, under the assumptions of the lemma, the sequence 	$(M_i,d_{g_i},p_i,dV_{g_i})$  is volume noncollapsing. In particular, there exists $v>0$ such that 	$\vol_{g_i}(B_1(p_i))\ge v>0$ for all $i$.  
	
	It follows from 
	\cite[Theorem~1.3]{De_Philippis_2018} that 
	\begin{align*}
		\cH^n(B_{\rho(R)}(p))
		&=\lim_{i\to\infty}\vol_{g_i}(B_{\rho(R)}(p_i))\\
		&\ge\omega_n R^n\big(1-\varepsilon(R)\big).
	\end{align*}
	Observe that $\rho(R)/R\to 1$, as $R\to \infty$. Therefore, we have  
	\[ 	\limsup_{r\to\infty}
	\frac{\cH^n(B_r(p))}{\omega_n r^n}\geq 1.\]
\end{proof}

\begin{lemma}\label{lem:maximal-volume-flat}
	Let $(X,d,\cH^n)$ be a complete noncollapsed $\RCD(0,n)$ space.  If for
	some $p\in X$,
	\[
	\limsup_{r\to\infty}
	\frac{\cH^n(B_r(p))}{\omega_n r^n}\ge1,
	\]
	then $(X,d,p,\cH^n)$ is measure-preserving isometric to
	$(\R^n,d_{\mathbb E},0,\cH^n)$.
\end{lemma}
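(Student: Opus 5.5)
The plan is to combine the sharp Bishop--Gromov inequality on noncollapsed $\RCD(0,n)$ spaces with the volume cone implies metric cone theorem, and then to use the fact that the only $n$-dimensional metric cone with the Euclidean volume ratio at the vertex is $\R^n$.

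\emph{Step 1: the volume ratio is identically one.} On a noncollapsed $\RCD(0,n)$ space $(X,d,\cH^n)$, the ratio $r\mapsto \cH^n(B_r(p))/(\omega_n r^n)$ is nonincreasing. Moreover, it tends to the density $\theta(p)=\lim_{r\to0}\cH^n(B_r(p))/(\omega_nr^n)$, which satisfies $\theta(p)\le1$ by De Philippis--Gigli \cite{De_Philippis_2018}. Monotonicity gives
\[
1\ge\theta(p)\ge \frac{\cH^n(B_r(p))}{\omega_n r^n}\ge \lim_{s\to\infty}\frac{\cH^n(B_s(p))}{\omega_n s^n}\ge1 ,
\]
where the last inequality is the hypothesis; the limit exists by monotonicity, so the $\limsup$ is a limit. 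Hence the ratio equals $1$ for every $r>0$.

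\emph{Step 2: $X$ is a cone.} For every $R>0$ the ball $B_R(p)$ has the same volume as the model ball, so the equality case of Bishop--Gromov applies. I would cite the ``volume cone implies metric cone'' theorem for $\RCD(0,N)$ spaces (De Philippis--Gigli \cite{De_Philippis_2018}). It shows that $B_R(p)$ is isometric to a ball around the tip of a Euclidean metric cone $C(Z)$ over an $\RCD(n-2,n-1)$ space $Z$. Letting $R\to\infty$, $X$ is globally isometric to $C(Z)$ with $p$ mapped to the tip, and $\cH^n$ corresponds to the cone measure $r^{n-1}dr\otimes\cH^{n-1}_Z$.

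\emph{Step 3: the cone is $\R^n$.} This is the step I expect to be the main obstacle. Applying the same reasoning at any other point $x\in X$, and noting that $\theta(x)\ge\lim_{r\to\infty}\cH^n(B_r(x))/(\omega_nr^n)$, which equals the asymptotic ratio at $p$, namely $1$, we get $\theta(x)=1$ for every $x$. Hence every point is a cone tip. A metric cone that is also a cone with vertex at a second point $x\neq p$ contains a line through $p$ and $x$. By the splitting theorem for $\RCD(0,n)$ spaces (Gigli), $X\cong\R\times X'$ with $X'$ a noncollapsed $\RCD(0,n-1)$ cone of Euclidean volume ratio. Iterating $n$ times yields $X\cong\R^n$ isometrically.

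An alternative route is to use that $\theta\equiv1$ means every point is regular in the sense of De Philippis--Gigli. Then Cheeger--Colding type $\varepsilon$-regularity (volume-almost-maximal implies Reifenberg) forces $Z$ to be the round sphere $\mathbb S^{n-1}$, since $\cH^{n-1}(Z)=n\omega_n$ and $Z$ is $\RCD(n-2,n-1)$, and the maximal volume rigidity there gives $Z\cong\mathbb S^{n-1}$. Either way, the isometry $X\cong\R^n$ sends $p$ to $0$, and because the reference measure is $\cH^n$ on both sides, it is automatically measure preserving. This gives $(X,d,p,\cH^n)\cong(\R^n,d_{\mathbb E},0,\cH^n)$.
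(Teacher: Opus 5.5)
Your proof is correct. Step~1 matches the paper's first step: the paper gets $\cH^n(B_r(p))\le\omega_n r^n$ directly from the Bishop inequality in \cite[Corollary~1.7]{De_Philippis_2018}, while you use the density bound $\theta(p)\le1$ together with monotonicity, and both force the ratio to be identically $1$.

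The routes diverge at the rigidity step. The paper never passes through cones. It invokes the equality case of that same Bishop inequality (equality at radius $2R$ forces $B_R(p)\cong B^{\mathbb E}_R(0)$) and lets $R\to\infty$, so the identification of $X$ is a single citation. Your Steps~2--3 essentially re-derive that equality case by hand: volume cone $\Rightarrow$ metric cone (a result from De Philippis--Gigli's earlier paper on volume cones rather than \cite{De_Philippis_2018}), then either splitting or identification of the cross-section. This makes the mechanism visible, but it costs length.

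The splitting iteration also needs several checks that you only assert. The factor $X'$ must again be noncollapsed with reference measure $\cH^{n-1}$, must be a cone at each of its points, and must have volume ratio one. The last of these follows from $\cH^n(B_r)=\int_{-r}^{r}\cH^{n-1}\bigl(B^{X'}_{\sqrt{r^2-t^2}}\bigr)\,dt$. Your line construction does work: the geodesic from $x$ through $p$ gives two directions at distance $\pi$ in the cross-section at $p$.

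Your ``alternative route'' is one observation away from a much shorter ending that needs neither splitting nor the identification $Z\cong\mathbb S^{n-1}$. The latter is anyway itself an instance of the maximal-volume rigidity the paper cites. The observation is this: a metric cone is invariant under dilations about its vertex, so $X$ is its own unique tangent cone at $p$. Since $\theta(p)=1$, the point $p$ is regular in the sense of De Philippis--Gigli, so that tangent cone is $\R^n$, and hence $X\cong\R^n$ with $p\mapsto0$.
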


\begin{proof}
	For a given $p\in X$, we define
	\[
	\theta_p(r):=\frac{\cH^n(B_r(p))}{\omega_n r^n},
	\qquad r>0.
	\]

	Since $(X,d,\cH^n)$ is a  noncollapsed $\RCD(0,n)$ space, it follows from 
	\cite[Corollary~1.7]{De_Philippis_2018} that
	$\cH^n(B_r(p))\leq \omega_n r^n$ for all $r>0$, that is, $\theta_p(r)\leq 1$.  	According to the Bishop--Gromov comparison theorem,  $\theta_p$ is nonincreasing. This together with the assumption that \[
	\limsup_{r\to\infty}
	\frac{\cH^n(B_r(p))}{\omega_n r^n}\ge1,
	\] 
	implies 
	\begin{equation}\label{eq:all-radii-equality}
		\cH^n(B_r(p))=\omega_n r^n
		\qquad\text{for every }r>0.
	\end{equation}
	Furthermore, the equality case of \cite[Corollary~1.7]{De_Philippis_2018} states  that equality at radius $2R$
	implies the  ball $ B_R(p)$ is isometric to
	$ B_R^{\mathbb E}(0)$.  Letting $R$ go to infinity finishes the proof. 
\end{proof}

\subsection{Stability of the Positive Mass Theorem} 
We now combine the results of the preceding subsections to prove
\cref{thm:pmt-stability}.
\begin{proof}[Proof of \cref{thm:pmt-stability}]
	For each $i$, let $W_i^1,\ldots,W_i^n$ be the smooth vector fields from
	\cref{prop:almost-parallel-vector-fields}.  Then
	\begin{equation}\label{eq:L^2covariant}
		\sum_{a=1}^n\int_{M_i}|\nabla W_i^a|^2dV_{g_i}
		\le C(n) m_{\mathrm{ADM}}(M_i, g_i) \to 0 \textup{ as } i\to \infty. 
	\end{equation}
	Moreover, it follows from  \cref{lem:Gram-control} that 
	\begin{equation}\label{eq:L^2innerproduct}
		\sum_{a,b=1}^n\int_{B_R(p_i)}
		|g_i(V_i^a,V_i^b)-\delta_{ab}|^2\,dV_{g_i}\leq C_R \cdot m_{\mathrm{ADM}}(M_i, g_i) \to 0 \textup{ as } i\to \infty
	\end{equation}
	for every  $R>0$.
	
	 The Ricci lower bound and the Bishop--Gromov comparison theorem imply pointed
	Gromov--Hausdorff precompactness.  Hence, after passing to a
	subsequence, there exists a complete pointed metric space $(X,d,p)$
	such that $(M_i,d_{g_i},p_i)$ converges to  $(X,d,p)$ in the pointed Gromov-Hausdorff topology. 
Under the assumptions of the theorem, it is clear that the sequence 	$(M_i, d_{g_i}, p_i)$  is volume noncollapsing: there exists $v>0$ such that 
$\vol_{g_i}(B_1(p_i))\ge v$. It follows from  \cite[Theorem~1.2]{De_Philippis_2018}  that $(M_i,d_{g_i},dV_{g_i},p_i)$ converges $(X,d,\cH^n,p)$ in the pointed measured Gromov-Hausdorff topology,
where $(X,d,\cH^n)$ is a complete noncollapsed
$\RCD(-K,n)$ space.

	By  \cref{prop:limit-frame}, there are 
	bounded vector fields $W^1,\ldots,W^n$ on $(X,d,\cH^n)$ that form an orthonormal frame almost
	everywhere and satisfy $\nabla W^a=0$ and $\diver W^a=0$.  Hence
	\cref{prop:RCD0n} implies that  the limit $(X,d,\cH^n)$  is an
	$\RCD(0,n)$ space.  Now \cref{lem:end-maximal-volume} and  \cref{lem:maximal-volume-flat} show that   $(X,d,p,\cH^n)$ is measure-preserving isometric to 
	$(\R^n,d_{\mathbb E},0,\cH^n)$.
	
	Since the above argument holds for any  subsequence of $(M_i,d_{g_i},p_i,dV_{g_i})$,  it follows that every subsequence of $(M_i,d_{g_i},p_i,dV_{g_i})$ has a 
	sub-subsequence converging to the same pointed measured space. This proves that  $(M_i,d_{g_i},p_i)$ converges to the Euclidean space $(\R^n,d_{\mathbb E},0)$ in pointed Gromov-Hausdorff sense.
\end{proof}

\bibliography{bibliography}
\bibliographystyle{amsalpha}
\end{document}